\newcommand{\NN}{\mathbb{N}}
\newcommand{\ZZ}{\mathbb{Z}}
\newcommand{\RR}{\mathbb{R}}
\newcommand{\QQ}{\mathbb{Q}}
\theoremstyle{definition} \numberwithin{equation}{section}
\newtheorem{theorem}{Theorem}
\newtheorem{lemma}[theorem]{Lemma}
\newtheorem{corollary}[theorem]{Corollary}
\newtheorem{definition}[theorem]{Definition}
\newtheorem{observation}[theorem]{Observation}
\newtheorem{example}[theorem]{Example}
\numberwithin{theorem}{section}
\begin{document}
\title{CP-chains and dimension preservation for projections of (×m,×n)-invariant Gibbs measures}
\author{Javier Ignacio Almarza\thanks{The author was supported by a CONICET doctoral fellowship.}
\thanks{I would like to thank Pablo Shmerkin for introducing this problem to me and for his constant help and advice. I would also like to thank Pablo Ferrari, Mike Hochman and Benjamin Weiss for their helpful remarks.}}

\date{\vspace{-5ex}}

\maketitle

\begin{abstract}
Dimension conservation for almost every projection has been well-established by the work of Marstrand, Mattila and Hunt and Kaloshin. More recently, Hochman and Shmerkin used CP-chains, a tool first introduced by Furstenberg, to prove all projections preserve dimension of  measures on $[0,1]^2$ that are the product of a $\times m$-invariant and a $\times n$-invariant measure (for $m$, $n$ multiplicatively independent). Using these tools, Ferguson, Fraser and Sahlsten extended that conservation result to $(\times m,\times n)$-invariant measures that are the pushforward of a Bernoulli scheme under the $(m,n)$-adic symbolic encoding. Their proof relied on a parametrization of conditional measures which could not be extended beyond the Bernoulli case. In this work, we extend their result from Bernoulli measures to Gibbs measures on any transitive SFT. Rather than attempt a similar parametrization, the proof is achieved by reducing the problem to that of the pointwise convergence of a double ergodic average which is known to hold when the system is exact.
\end{abstract}

\section{Introduction}The use of ergodic theory in fractal geometry has recently yielded remarkable advances through the development of a rich theory of processes  of \textit{magnifying} fractal measures. These dynamics, originally introduced by Furstenberg \cite{Furst70}, were recently used by Hochman and Shmerkin \cite{Hochman2010} to prove a conjecture by Furstenberg that states that, for products of sets that are invariant under ``arithmetically independent'' dynamics, all non-trivial projections satisfy dimension conservation. 

\begin{theorem}[Theorem 1.3 of \cite{Hochman2010}]Suppose $\log m/\log n$ is irrational and let $\mu$ and $\nu$ be measures on $\mathbb{T}$ invariant under $T_m$ and $T_n$, respectively. Then
$$\text{dim }\pi(\mu\times\nu)=\min\{1,\text{dim}(\mu\times\nu)\}$$
for all $\Pi_{2,1}\in\setminus\{\pi_1,\pi_2\}$
\end{theorem}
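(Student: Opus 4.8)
The plan is to establish the two inequalities separately. The upper bound $\dim \pi(\mu\times\nu) \le \min\{1, \dim(\mu\times\nu)\}$ is immediate: any projection onto $\RR$ is Lipschitz, hence cannot increase Hausdorff dimension, and its image lies in $\RR$, forcing the dimension to be at most $1$. All the content is therefore in the lower bound $\dim \pi(\mu\times\nu) \ge \min\{1, \dim(\mu\times\nu)\}$, which I would attack through the theory of CP-chains together with the method of local entropy averages.

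First I would pass from the measures to their magnification dynamics. A $T_m$-invariant measure $\mu$ and a $T_n$-invariant measure $\nu$ each generate a CP-distribution: the empirical distribution of the rescaled conditional measures on shrinking $m$-adic (resp.\ $n$-adic) cells converges to a stationary, ergodic law on the space of measures whose CP-dimension equals $\dim\mu$ (resp.\ $\dim\nu$). The product $\mu\times\nu$ then generates a joint CP-object on $[0,1]^2$; the essential difficulty, compared with a single scaling rate, is that the two coordinates magnify at the incommensurate rates $\log m$ and $\log n$. To reconcile them I would pass to the associated scenery flows, reparametrising discrete magnification time by $\log m$ and $\log n$, so that $\mu\times\nu$ gives rise to a continuous-time flow acting on pairs of magnified measures on $[0,1]^2$.

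The lower bound for a fixed direction then follows from local entropy averages: for a measure generated by a CP-distribution $Q$, the dimension of a linear image $\pi_\theta$ is bounded below by the $Q$-average of the local entropy of $\pi_\theta$ applied to the component measures. Integrating and exploiting the product structure yields $\dim \pi_\theta(\mu\times\nu) \ge \min\{1, \dim(\mu\times\nu)\}$ for Lebesgue-a.e.\ $\theta$, a Marstrand-type statement. The decisive step, and the one I expect to be the main obstacle, is upgrading this from almost every $\theta$ to every $\theta \notin \{0, \pi/2\}$, i.e.\ excluding the two coordinate projections $\pi_1,\pi_2$. Here I would use the hypothesis that $\log m/\log n$ is irrational. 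Magnifying $\mu\times\nu$ by $\operatorname{diag}(m^{t},n^{t})$ and then projecting in direction $\theta$ is, up to rescaling, the same as projecting the original measure in a direction $\theta'=\theta'(t)$ that genuinely rotates as $t$ varies precisely because $m\ne n$.

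Irrationality of $\log m/\log n$ is exactly what makes the joint scenery flow ergodic, so the induced action on the space of directions is minimal, with the coordinate directions as its only fixed points. The set of directions for which the lower bound could fail is invariant under this induced action and has Lebesgue measure zero; by minimality such an invariant set must be empty apart from the fixed coordinate directions. Combining this with the a.e.\ lower bound from local entropy averages yields the conclusion for every non-trivial projection, which is the assertion of the theorem.
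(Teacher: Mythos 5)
First, a framing remark: the statement you were asked to prove is quoted background in this paper (it is Theorem 1.3 of Hochman--Shmerkin, cited here as Theorem 1.1), so the paper itself contains no proof of it; what it does contain is the machinery (Theorem 2.10, quoted from Hochman--Shmerkin's Theorem 8.2, and the reduction used for the paper's own generalization to Gibbs measures) that shows exactly what such a proof needs. Your overall architecture --- trivial Lipschitz upper bound; CP-chains and local entropy averages giving the lower bound for a.e.\ direction; upgrade to all non-coordinate directions using the magnification-induced rotation by $\alpha=\log m/\log n$ --- is indeed the Hochman--Shmerkin route. But the decisive step, which you yourself flag as the main obstacle, is wrong as you state it. You argue that the exceptional set of directions is invariant under the induced action and Lebesgue-null, and that ``by minimality such an invariant set must be empty apart from the fixed coordinate directions.'' Minimality gives no such thing: for an irrational rotation, any single orbit, or any countable union of orbits, is a nonempty, fully invariant, Lebesgue-null set. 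So invariance plus nullity of the exceptional set cannot force it to be empty, and the proof collapses at precisely the point where the hypothesis $\log m/\log n\notin\QQ$ has to do its work.

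The missing ingredient is the \emph{lower semicontinuity} of the function $E(\pi)$, which is part of the statement of Theorem 2.10 in this paper, together with the invariance of $E$ itself (not merely of the exceptional set) along the induced action on directions. The correct upgrade runs as follows: for $\pi\in\Pi_{2,1}\setminus\{\pi_1,\pi_2\}$ the orbit of $\pi$ under the induced irrational rotation is dense, and $E$ is constant along this orbit; pick $\pi'$ in the full-measure (hence dense) set where $E(\pi')=\min(1,\dim Q)$ and orbit points $\sigma_k\to\pi'$; lower semicontinuity at $\pi'$ gives $\min(1,\dim Q)=E(\pi')\leq\liminf_k E(\sigma_k)=E(\pi)$. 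Combined with part 3 of Theorem 2.10, namely $\dim\pi(\mu\times\nu)\geq E(\pi)$ for any measure generating $Q$, this yields the lower bound for \emph{every} non-coordinate $\pi$, and the trivial upper bound finishes the proof. Without semicontinuity, no amount of ergodicity or minimality of the direction dynamics closes the gap between ``a.e.\ $\pi$'' and ``all $\pi$''; this is also why the present paper, for its Gibbs generalization, is careful to produce an ergodic CP-chain of the product form $\lambda\times\tilde{Q}$ and then invoke results (Theorem 2.10 and the quoted Lemma 5.1 of Ferguson--Fraser--Sahlsten) that package exactly this semicontinuity-plus-rotation argument.
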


The projections $\pi_1$ and $\pi_2$ are exceptions since they map $\mu\times\nu$ to $\mu$ or $\nu$, and a drop in dimension is to be expected. The result for invariant sets follows from this theorem as a consequence of the variational principle.

Notice the product measures in this theorem are invariant under the non-conformal endomorphism $T_{m,n}:\mathbb{T}^2\to\mathbb{T}^2$ that maps $(x,y)\mapsto (T_m(x),T_n(y))$, and it is natural to raise the question of what kinds of $T_{m,n}$-invariant measures on $\mathbb{T}^2$ satisfy the same dimension conservation result for all projections in $\Pi_{2,1}\setminus\{\pi_1,\pi_2\}$.

Recently, Ferguson, Fraser and Sahlsten \cite{Ferguson2013} proved this was the case for Bernoulli measures, that is, measures on $\mathbb{T}^2$ that are the push-forward of a Bernoulli scheme on the $(m,n)$-adic symbolic encoding of $\mathbb{T}^2$, and raised the question of whether their result could be extended to Gibbs measures. In this context, the term ``Gibbs measures'' is used in the sense introduced by Bowen \cite{Bowen75} to study Anosov diffeomorphisms through symbolic partitions, that is, it refers to measures $m$ on some symbolic space $Y\subseteq \Sigma^\NN$ ($\Sigma$ a finite alphabet) for which there is some constant $K>0$ such that

$$K^{-1}\leq\frac{m(x_1\dots x_k)}{e^{-kP'}e^{\sum_{i=1}^k\phi(T^k(\omega))}}\leq K$$
for some function $\phi$ on $Y$ (called {\em potential}), a constant $P$ and all sequences $\omega=x_1\dots x_ky_{k+1}\dots\in Y$.

In this paper we provide an affirmative answer to Ferguson, Fraser and Sahlsten's question.

\begin{theorem}\label{projectionresult}Let $m,n\in\NN$ satisfy $\log m/\log n\notin\QQ$, $\mu$ be a $T_{m,n}$-invariant measure on $\mathbb{T}$ that is the push-forward of a Gibbs measure for some H\"{o}lder potential and $\pi$ be a projection in $\Pi_{2,1}\setminus\{\pi_1,\pi_2\}$. Then

$$\text{dim }\pi\mu=\min\{1,\text{dim }\mu\}$$
\end{theorem}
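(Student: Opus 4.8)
The plan is to follow the CP-chain strategy of Hochman--Shmerkin and Ferguson--Fraser--Sahlsten, but to replace the explicit parametrization of conditional measures (which was the device that confined the argument to the Bernoulli case) with an ergodic-theoretic convergence statement that survives for general Gibbs measures. The key structural fact one wants is that the measure $\mu$, when magnified along the natural dyadic/$(m,n)$-adic zooming dynamics, generates a \emph{CP-distribution} whose ergodic-theoretic dimension equals $\dim\mu$, and for which the projection $\pi$ acts so that the dimension of the projected fibered measures is controlled. Once a CP-chain is in hand, the Hochman--Shmerkin machinery (specifically their projection theorem for CP-chains, which guarantees that \emph{every} projection other than the coordinate projections preserves dimension up to the trivial upper bound $\min\{1,\dim\mu\}$) gives the conclusion. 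So the real content is to manufacture the correct CP-chain for a pushforward of a Gibbs measure and to identify its dimension with $\dim\mu$.

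\medskip
\noindent First I would set up the symbolic picture: realize $\mathbb{T}^2$ via its $(m,n)$-adic coding, so that $T_{m,n}$ becomes the shift on the product symbolic space and $\mu$ becomes the pushforward of a Gibbs measure $m$ under this coding map. I would then describe the magnification process as a sequence of conditional measures on cylinders of decreasing scale and show that, for $\mu$-typical points, these rescaled conditional measures converge in distribution. This is precisely where the exactness of the system enters: the limiting behavior of the conditional measures on scale-$k$ cylinders is governed by a double average (averaging over the two independent zooming rates dictated by $m$ and $n$), and I would reduce the existence of the CP-chain to the pointwise convergence of this double ergodic average. The Hölder regularity of the potential is what ensures the Gibbs measure has the quasi-Bernoulli/mixing properties (bounded distortion) needed to make these conditional measures well-behaved and the limit nondegenerate.

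\medskip
\noindent The main obstacle, as the abstract already flags, will be establishing the pointwise convergence of this double ergodic average. For a single transformation Birkhoff's theorem suffices, but here the two coordinates are being magnified at incommensurable rates (because $\log m/\log n\notin\QQ$), so the relevant average is a genuinely two-parameter, non-commuting ergodic average rather than a single Birkhoff sum. The plan is to exploit that the underlying Gibbs system on the transitive SFT is \emph{exact} (equivalently, the natural extension is a $K$-system), and to invoke the known pointwise convergence results for such double averages under exactness. This is the step that replaces, and strictly generalizes, the Bernoulli parametrization of Ferguson--Fraser--Sahlsten: in the Bernoulli case the conditional measures can be written down explicitly and their convergence is immediate, whereas for Gibbs measures one must instead extract the convergence from the mixing/exactness of the system.

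\medskip
\noindent Finally, having produced the CP-chain and identified its dimension, I would verify the two excluded projections $\pi_1,\pi_2$ are exactly the directions in which the CP-chain's projection dimension can drop, and confirm that for every other $\pi\in\Pi_{2,1}$ the CP-chain projection theorem yields $\dim\pi\mu=\min\{1,\dim\mu\}$. The upper bound $\dim\pi\mu\le\min\{1,\dim\mu\}$ is automatic since projections are Lipschitz and the image lies in a line; the substance is the matching lower bound, which the CP-chain formalism delivers uniformly over all admissible $\pi$ once the convergence step is secured.
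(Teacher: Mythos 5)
Your outline of the construction phase coincides with the paper's: pass to the $(m,n)$-adic symbolic coding, run the magnification dynamics at the two incommensurable speeds $k$ and $l_k(t)=\lfloor t+\alpha k\rfloor$ to keep eccentricities bounded, reduce generation of a CP-chain to pointwise convergence of averages of the form $\frac{1}{N}\sum_{k=1}^N F(T^{l_k(t)}\omega)\,G(T^k\omega)$, and secure that convergence from exactness/the $K$ property of the Gibbs system (this is where the H\"older potential and Lemma \ref{lemagibs} enter). That much is the same route as Theorems \ref{genericity2} and \ref{teoconvergence}.

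The genuine gap is in your final step. You assert that the Hochman--Shmerkin projection theorem ``guarantees that \emph{every} projection other than the coordinate projections preserves dimension.'' It does not: Theorem \ref{teohochman} gives (i) $E(\pi)=\min(k,\dim Q)$ only for \emph{almost every} $\pi$, (ii) the lower bound $\dim\pi\mu\geq E(\pi)$ for every $\pi$ and every generating $\mu$, and (iii) lower semicontinuity of $E$. If the theorem said what you claim, Furstenberg's projection conjecture would be an immediate corollary of it, which it is not; the a.e.\ statement alone is already covered by Marstrand--Mattila and needs no CP-chains. The passage from ``almost every $\pi$'' to ``all $\pi\in\Pi_{2,1}\setminus\{\pi_1,\pi_2\}$'' is a substantive extra step requiring structure of the generated distribution that you never establish: one must show (a) that the generated chain is \emph{ergodic} (Section \ref{secergodicity} of the paper, via the mixing-type Theorem \ref{teomixing}; ergodicity is a hypothesis of Theorem \ref{teohochman}), and (b) that $Q$ has the product form $Q=\lambda\times\tilde{Q}$ with Lebesgue measure in the eccentricity coordinate (Observation \ref{obsemix}). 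With (a) and (b) in hand one runs the argument of \cite[Lemma 5.1]{Ferguson2013}: the magnification dynamics rotates the eccentricity angle by the irrational $\alpha$, so $E$ is essentially constant along $R_\alpha$-orbits in the space of projections; equidistribution of these orbits, the a.e.\ statement (i), and lower semicontinuity (iii) then force $E(\pi)\geq\min(1,\dim Q)$ for every $\pi$ off $\{\pi_1,\pi_2\}$, after which (ii) and the trivial Lipschitz upper bound finish the proof. Your proposal, as written, would only deliver dimension conservation for almost every projection, which is strictly weaker than the statement.
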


The proof, as in their case, is the consequence of showing that such a measure generates a process of magnifying measures of the kind introduced by Furstenberg and used by Hochman and Shmerkin, which are called \textit{CP-chains}. That is, we prove

\begin{theorem}\label{teoprincipal}Let $m,n\in\NN$ satisfy $\log m/\log n\notin\QQ$ and $\mu$ be a $T_{m,n}$-invariant measure on $\mathbb{T}$ that is the push-forward of a Gibbs measure for some H\"{o}lder potential. Then $\mu$ generates an ergodic CP-chain.
\end{theorem}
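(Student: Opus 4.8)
The plan is to verify the criterion of Hochman and Shmerkin \cite{Hochman2010}: to exhibit $\mu$ as generating an ergodic CP-chain it suffices to show that, for $\mu$-almost every point $z$, the empirical distributions of the magnifications of $\mu$ based at $z$ converge weakly to a single CP-distribution $\mathbf{P}$, and that $\mathbf{P}$ is ergodic for the magnification operator. First I would pass to the symbolic model: writing $\Psi\colon Y\to\mathbb{T}^2$ for the $(m,n)$-adic coding and $\tilde\mu$ for the Gibbs measure with $\mu=\Psi_*\tilde\mu$, the level-$k$ symbolic rectangle $R_k(\omega)$ containing $\Psi(\omega)$ has dimensions $m^{-k}\times n^{-k}$, and advancing symbolic time by one corresponds to applying the shift $\sigma$ to $\omega$.

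The first difficulty is that CP-magnification is conformal---it renormalises square $b$-adic cells---whereas the symbolic rectangles become increasingly eccentric. Here I would use the hypothesis $\log m/\log n\notin\QQ$. Assuming $m<n$, the square of side $n^{-q}$ placed on $\Psi(\omega)$ sits inside the rectangle whose vertical generation is $q$ and whose horizontal generation is $p(q)=\lfloor q\log n/\log m\rfloor$; the sequence $(\{q\log n/\log m\})_q$ equidistributes, which is exactly what forces the cell geometry to average correctly across scales. Renormalising the restriction of $\mu$ to this square back to the unit square, and transporting $\Psi(\omega)$ along, yields a measure--point pair that depends on $\omega$ essentially through the two shifts $\sigma^{p(q)}\omega$ and $\sigma^{q}\omega$: the horizontal structure of the renormalised cell is governed by the conditioning on the first $p(q)$ base-$m$ digits, while the vertical fibre is governed by the conditioning on the first $q$ base-$n$ digits.

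Consequently the empirical CP-measure based at $\Psi(\omega)$ can be written as a non-conventional ergodic average of the shape
$$\frac{1}{Q}\sum_{q=1}^{Q}\delta_{F(\sigma^{p(q)}\omega,\,\sigma^{q}\omega)},$$
and the whole statement reduces to establishing the pointwise (in $\omega$) convergence of this double average. This is the heart of the argument and the step I expect to be hardest. In the Bernoulli case of \cite{Ferguson2013} the digits are independent, so the two conditionings factorise and the average collapses to an ordinary Birkhoff sum whose limit is immediate; this factorisation is precisely the parametrization that does not survive beyond the Bernoulli setting. For a general Gibbs measure the horizontal and vertical digits remain correlated at every scale, so one must instead exploit the dynamics. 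The key input is that a Gibbs measure for a H\"{o}lder potential on a transitive SFT is \emph{exact} for the shift. Exactness furnishes triviality of the tail field, and together with the uniform distortion bounds built into the Gibbs property (which keep the renormalised conditional measures tight) it yields the required pointwise convergence of the double ergodic average, a convergence known to hold for exact systems. The equidistribution of $\{q\log n/\log m\}$ then reconciles the two time-scales $p(q)$ and $q$ and identifies the common limit.

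Finally I would identify the limit $\mathbf{P}$ as a genuine CP-distribution and prove its ergodicity. Stationarity under the magnification operator is inherited from the shift-invariance of $\tilde\mu$ together with the Ces\`{a}ro averaging, and adaptedness holds because $\Psi(\omega)$ always lies in the support of the renormalised cell. Ergodicity follows once more from exactness: an invariant event for $\mathbf{P}$ pulls back, through the coding and the scale-matching, to a tail-measurable event for $\tilde\mu$, which is trivial. Combining convergence, stationarity, adaptedness and ergodicity gives that $\mu$ generates an ergodic CP-chain.
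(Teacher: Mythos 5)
Your proposal follows the same skeleton as the paper (symbolic coding, two time scales reconciled by the irrational rotation $\{q\log n/\log m\}$, reduction to a non-conventional double ergodic average, appeal to exactness of Gibbs measures), but it has a genuine gap at precisely the step you identify as the heart of the argument. The average $\frac{1}{Q}\sum_q \delta_{F(\sigma^{p(q)}\omega,\sigma^q\omega)}$ is \emph{not} of the form covered by the pointwise convergence theorem for exact systems: that theorem (Derrien's result, adapted in the paper as Theorem \ref{teo2} and Corollary \ref{coroteo2}) handles averages $\frac{1}{N}\sum_k T^{l_k(t)}(F)\,T^k(G)$ for \emph{fixed} bounded functions $F,G$. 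What actually appears in the CP scenery is the conditional measure $\mu^\omega_{t,k}$ evaluated on cylinders, i.e.\ a conditional expectation with respect to the $\sigma$-algebra $\mathcal{F}_1^k\vee\mathcal{G}_1^{l_k(t)}$, which \emph{moves with $k$ at two different speeds}; after shifting, the conditioning algebras form a non-monotone sequence, so neither Doob's martingale theorem nor Maker's theorem applies. Bridging this requires two pieces of work that your proposal does not supply: (a) splitting $\mu^\omega_{t,k}$ on product cylinders into a product of two single-speed conditional expectations plus a cross term shown to vanish a.s.\ by an orthogonality argument (Lemma \ref{lema1}, Corollary \ref{coro1}); and (b) using the Gibbs property in the form of uniform exponential closeness of conditioning on the recent past to conditioning on the whole past (Lemma \ref{lemagibs}) to replace the moving-algebra conditional expectations by \emph{fixed} $L^\infty$ functions, after which the double ergodic theorem applies. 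Your description of the Gibbs input as "uniform distortion bounds which keep the renormalised conditional measures tight" misattributes its role — tightness is automatic on a compact symbolic space; the issue is pointwise convergence along non-monotone $\sigma$-algebras. Exactness alone cannot close this gap: the paper's concluding remarks note explicitly that for general exact endomorphisms this step fails, which is why the result is stated for Gibbs measures only.

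The ergodicity step is also gapped. An $S$-invariant event for the limit distribution lives on the space $\mathbb{T}\times\Xi$ of (angle, point, measure) triples, and there is no evident map pulling it back to a tail-measurable event for $\tilde\mu$; "ergodicity follows once more from exactness" does not constitute an argument. The paper instead proves an asymptotic decorrelation statement, $\mathbb{E}_Q[f\,\tilde S_{s,h}(f^*)]\to\mathbb{E}_Q[f]\,\mathbb{E}_Q[f^*]$ (Theorem \ref{teomixing}), by running the whole convergence machinery on products $f\,\tilde S_{s,h}(f^*)$, exploiting the explicit product form of the limits, mixing of Gibbs measures, and ergodicity of $R_\alpha$; ergodicity of $Q=\lambda\times\tilde Q$ then follows. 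A smaller omission: you never address the boundary issue, i.e.\ the case where the pushforward measure charges the boundaries of the $(m,n)$-adic rectangles (points with two symbolic representations); the paper handles this separately, using exactness to show such a measure is supported on a single $m$-adic or $n$-adic line, reducing to the conformal case.
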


The projection result follows from this theorem through an application of a major result by Hochman and Shmerkin that relates the dimension of projections of a measure to the dimension of projections of generic measures under the distribution of the CP-process.

Unlike the proof of Theorem \ref{teoprincipal} for Bernoulli schemes in \cite{Ferguson2013},  our proof can't rely on the independence on the past to nicely separate the magnifying CP dynamics on each of the two coordinates of the double torus. Separation, which in \cite{Ferguson2013} is achieved through an elegant parametrization of measures that condition on past coordinates, is important because the two magnifying dynamics must evolve at different speeds to ensure that the sequence of rectangles that the process magnifies into is similar to a sequence of balls, i.e. has bounded eccentricity.

Nevertheless, the Gibbs property ensures enough memorylessness on the past to allow us to reduce the question to a problem of pointwise convergence of double ergodic averages, which is known to have an affirmative answer for exact and Kolmogorov systems, a more general class than Gibbs systems.

It should be noted, though, that our methods do not provide a nice closed expression for the CP-chain distribution that is generated by $\mu$, as was the case in \cite{Ferguson2013}.

The outline of this paper is as follows. Section \ref{cpsection} introduces the machinery of magnifying CP dynamics. Section \ref{sec1} provides the translation of these dynamics on the torus to dynamics on a symbolic space through the $(m,n)$-adic encoding. Section \ref{secergodictheorems} adapts some standard ergodic theorems to our needs and Section \ref{secgibbs} introduces the main properties of Gibbs measures. In Section \ref{secproof} the main result is proved.

\section{Dynamics of conditional probabilities and dimension}\label{cpsection}
\subsection{Preliminaries}
Let us fix the notation and briefly present the main results concerning \textit{CP-chains}, which were first introduced by Furstenberg in \cite{Furst70} and which were recently used by Hochman and Shmerkin in \cite{Hochman2010} to derive strong geometric properties about dimensions of projected measures.

We will be working with probability measures $\mu\in\mathcal{P}(K)$ on some compact metric space $K$, and as usual $\mathcal{P}(K)$ will be endowed with the weak topology, which makes it a compact metrizable space.

For $B\subseteq\RR^d$ any \textit{box}, that is, any product of $d$ real intervals which can be open, closed or half-closed, let $T_B:\RR^d\to\RR^d$ be the homothetic affine transformation that normalizes the volume of $B$ to 1 and sends its ``lower left corner'' to the origin, that is,
$$T_B(x)=\frac{1}{|\text{vol }B|^{1/d}}(x-\min B)$$
where $\min$ refers to the lexicographic ordering.

\begin{definition}For any box $B\subseteq\RR^d$ the \textit{normalized box} is $B^*=T_B(B)$.
\end{definition}

For any Borel probability measure $\mu$ on $\RR^d$ we will write
$$\mu^B=\frac{1}{\mu(B)}\mu\mid_B\circ T_B^{-1}$$
where $\mu\mid_B$ denotes the restriction of $\mu$ to $B$. Clearly, $\mu^B$ is supported on $B^*$.

\begin{definition}Let $\mathcal{E}$ be a collection of boxes in $\RR^d$. A \textit{partition operator} $\Delta$ on $\mathcal{E}$ assigns to each $B\in\mathcal{E}$ a partition $\Delta B\subseteq \mathcal{E}$ of $B$ in a translation and scale-invariant manner, i.e. for any homothety $T$ and $B\in\mathcal{E}$ such that $T(B)\in \mathcal{E}$ then $\Delta(T(B))=T(\Delta(B))$. 
\end{definition}

From a partition operator $\Delta$ we may iteratively define a sequence of refining partitions of $B$.
$$\Delta^0(B)=\{B\}\text{ and }\Delta^{n+1}(B)=\bigcap_{E\in\Delta^n(B)}\Delta(E)$$
\begin{definition}A partition operator $\Delta$ is $\rho$-regular if for any $B\in\mathcal{E}$ there exists a constant $c>1$ such that for any $k\in\NN$ any element $E\in\Delta^k(B)$ contains a ball of radius $\rho^k/c$ and is contained in a ball of radius $c\rho^k$.
\end{definition}

\begin{example}A very natural example of a partition operator is the $m$-adic partition operator $\Delta_m$ on $\mathbb{T}$. Identify $\mathbb{T}$ with $[0,1)$ and let $\mathcal{E}$ be the set of half-open $m$-adic intervals $$\mathcal{E}=\{[l/m^k,(l+1)/m^k):\ k\in\NN;\ l+1\leq m^k\}$$
Then $\Delta_m([l/m^k,(l+1)/m^k))=\{[lm+i/m^{k+1},(lm+i+1)/m^{k+1}):\ i<m\}$. Clearly, this partition is $1/m$-regular.
\end{example}

\begin{definition}A \textit{CP-chain} for a $\rho$-regular partition operator $\Delta$ on a collection of boxes $\mathcal{E}$ is a Markov process $(\mu_k,B_k)_{k=1}^\infty$ with state space
$$\Theta=\{(B,\mu)\in\mathcal{E}\times\mathcal{P}(\RR^d):\ \text{supp }\mu\subseteq B^*\}$$ and transition law given by
$$\text{for $E\in\Delta(B^*)$, $(B,\mu)\mapsto(E,\mu^E)$ with probability $\mu(E)$}$$
\end{definition}

Hence, a CP-chain is a Markov chain of magnified conditional probabilities (whence the ``CP''), that is, measures of the form $\mu^B$ that ``magnify'' a measure $\mu$ into a specific box $B$, chosen according to $\mu$.

In many contexts we will speak of a CP-chain without specifying $\Delta$ or $\mathcal{E}$. Moreover, we will usually have an initial stationary distribution $Q$ on $\Theta$ associated to the chain, and we will use the term ``CP-chain'' to refer not only to the chain (i.e. the transition probabilities) but also to $Q$ and to the shift invariant measure on $\Theta^\NN$ that is induced by $Q$ and the transition law . For such a CP-chain $Q$ the \textit{measure component} will be the projection of $Q$ to its $\mathcal{P}(\RR^d)$ component.

Notice that while the state space sets $\mathcal{P}(\RR^d)$ as the ambient space for measures $\rho$-regularity implies that, conditioned on some initial $B_0\in\mathcal{E}$, the process $(B_n)_{n=1}^\infty$ consists of boxes all of which are included on some compact $K\subseteq\RR^d$, and the measure process $(\mu_n)_{n=1}^\infty$ will take values on a compact and metrizable space $\mathcal{P}(K)$.

To avoid confusion, measures on measure spaces, that is, elements of $\mathcal{P}(\mathcal{P}(K))$, will be called \textit{distributions}.

For any measure $\mu\in\mathcal{P}(K)$ let $\delta_\mu\in\mathcal{P}(\mathcal{P}(K))$ denote the distribution on the space of measures that is supported on the single element $\mu$.

For some fixed box $B\in\mathcal{E}$ and $x\in B$ let us also write $\Delta^k(x)$ for the unique $E\in\Delta^k(B)$ to which $x$ belongs.

\begin{definition}Let $\hat{Q}$ be the measure component of a a CP-chain $Q$ with partition operator $\Delta$ on a collection $\mathcal{E}$ and fix $B\in\mathcal{E}$. We say that $Q$ is \textit{generated} by $\mu\in\mathcal{P}(B^*)$ if for $\mu$-a.e. $x\in B^*$ the \textit{CP scenery distributions}
$$\frac{1}{N}\sum_{k=0}^{N-1}\delta_{\mu^{\Delta^k(x)}}$$
converge weakly to $\hat{Q}$ as $N\to\infty$ and if for any $q\in\NN$ the \textit{q-sparse} CP scenery distributions
$$\frac{1}{N}\sum_{k=0}^{N-1}\delta_{\mu^{\Delta^{qk}(x)}}$$
converge weakly to some possibly different distribution $\hat{Q}_q\in\mathcal{P}(\mathcal{P}(K))$
\end{definition}

It is a consequence of the ergodic theorem that $\hat{Q}$-almost every measure $\nu$ generates $Q$ \cite[see Proposition 7.7]{Hochman2010}.

\subsection{Dimension of measures and CP-chains}
In this section we present a major result of \cite{Hochman2010} connecting the dimension of the projection of a measure to the average ``local entropy dimension'' of such a projection under a CP-chain distribution generated by the original measure. We denote the Hausdorff dimension of a set by $\text{dim }A$.

\begin{definition}Let $\mu\in\mathcal{P}(X)$ be a measure on some metric space $X$. The \textit{lower Hausdorff dimension} of $\mu$ is defined as
$$\text{dim}_*\mu=\inf\{\text{dim }A:\ \mu(A)>0\}$$

The \textit{upper and lower local dimensions} of $\mu$ at a point $x$ are defined as
$$\overline{\text{dim}}(\mu,x)=\limsup_{r\to 0}\frac{\log \mu(B_r(x))}{\log r}$$
and
$$\underline{\text{dim}}(\mu,x)=\liminf_{r\to 0}\frac{\log \mu(B_r(x))}{\log r}$$
respectively.

We say that  $\mu$ is exact dimensional if for $\mu$-a.e. $x$ we have that $\overline{\text{dim}}(\mu,x)=\underline{\text{dim}}(\mu,x)=c$ for some constant $c$. We observe that this implies $\text{dim}_*\mu=c$ (see \cite[proposition 10.2]{FalconerTech}). We may then unambiguously write $\dim \mu$ when $\mu$ is exact dimensional. 

Finally, for a distribution $\hat{Q}\in\mathcal{P}(\mathcal{P}(X))$ let the \textit{lower dimension} of $\hat{Q}$ be
$$\dim_*\hat{Q}=\int\dim_*\nu\ d\hat{Q}(\nu)$$
\end{definition}

It can be seen that the ergodic theorem implies that $\hat{Q}$-almost every measure $\nu$ is exact dimensional when $\hat{Q}$ is the measure component of a CP-chain (see \cite[Lemma 7.9]{Hochman2010}), so that in this case we may unambiguously speak of the dimension $\dim Q=\dim \hat{Q}$ of the CP-chain.

Let $\Pi_{d,k}$ denote the set of orthogonal projections from $\RR^d$ to any $k$-dimensional subspace.

\begin{definition}For any measure $\mu$ the \textit{r-entropy} of $\mu$ is defined as
$$H_r(\mu)=-\int\log\mu(B(x,r))d\mu(x)$$

Let $Q$ be a CP-chain for a $\rho$-regular partition operator $\Delta$ and $q>0$. Then $E_q:\Pi_{d,k}\to\RR$ is the function defined by
$$E_q(\pi)=\int\frac{1}{q\log(1/\rho)}H_{\rho^q}(\pi\nu)d\hat{Q}(\nu)$$
\end{definition}

\begin{theorem}\cite[Theorem 8.2]{Hochman2010}\label{teohochman}Let $Q$ be an ergodic CP-chain. Then the function $E(\pi)=\lim_{q\to 0}E_q(\pi)$ exists pointwise, is lower semi-continuous
\footnote{For the quotient topology on $\Pi_{d,k}=\text{GL}(\RR^d)/H$, regarded as a homogeneous space for the Lie group $\text{GL}(\RR^d)$, and $H$ the stabilizer of any $k$-dimensional subspace of $\RR^d$.}
\begin{enumerate}
\item For any $\pi\in\Pi_{d,k}$ and $\hat{Q}$-almost every $\nu$
$$\text{dim }\pi\nu=E(\pi)$$
\item For almost every $\pi\in\Pi_{d,k}$ we have
$$E(\pi)=\min(k,\text{dim }Q)$$
\item For any measure $\mu$ that generates $Q$ and for any $\pi\in\Pi_{d,k}$
$$\text{dim }\pi\mu\geq\text{dim }E(\pi)$$
\end{enumerate}
\end{theorem}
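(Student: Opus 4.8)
The three conclusions are all shadows of a single principle: for a CP-chain the (projected) dimension of a typical measure is a Birkhoff average of a \emph{local} entropy observable read off along the magnification scenery, so that the stationarity and ergodicity of $Q$ convert geometric statements into the ergodic theorem. The plan is therefore to first isolate this observable, then harvest (1)--(3) from it. For the existence of $E(\pi)$ I would show that $q\mapsto\int H_{\rho^q}(\pi\nu)\,d\hat{Q}(\nu)$ is subadditive up to a bounded error: magnifying by one block of the chain and invoking stationarity of $Q$ writes the scale-$\rho^{q_1+q_2}$ entropy as the scale-$\rho^{q_1}$ entropy plus a $\hat{Q}$-average of a conditional scale-$\rho^{q_2}$ entropy, itself controlled by $\int H_{\rho^{q_2}}(\pi\nu)\,d\hat{Q}$; the $\rho$-regularity is what keeps the error bounded (cells are comparable to balls). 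A Fekete-type argument then makes the normalized entropy asymptotically linear in $q$, so $E_q(\pi)$ converges. Lower semicontinuity in $\pi$ I would get from the fact that for each fixed positive scale and fixed $\nu$ the map $\pi\mapsto H_{\rho^q}(\pi\nu)$ is lower semicontinuous (mass can only spread out in a limit), passing this through the integral and the limit in $q$ by Fatou.

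For part (1) I would fix $\pi$ and apply a \emph{local entropy averages} identity: the normalized global entropy $\tfrac{1}{N\log(1/\rho)}H(\pi\nu,\Delta^N)$ of $\pi\nu$ at level $N$ equals, up to $o(1)$, the Birkhoff average over $k=0,\dots,N-1$ of a single observable $g_\pi$ --- the ``one-step projected conditional entropy'' --- evaluated along the CP-chain issued from $\nu$. Ergodicity of $Q$ forces this average to converge for $\hat{Q}$-a.e.\ $\nu$ to $\int g_\pi\,d\hat{Q}=E(\pi)$. Comparing the normalized global entropy at level $N$ with $\log\pi\nu(B(x,r))/\log r$ for $r=\rho^N$ (again using that level-$N$ cells are sandwiched between balls of radius $\rho^N/c$ and $c\rho^N$) squeezes both $\overline{\dim}(\pi\nu,\cdot)$ and $\underline{\dim}(\pi\nu,\cdot)$ to the common value $E(\pi)$; hence $\pi\nu$ is exact dimensional with $\dim\pi\nu=E(\pi)$.

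Part (2) I would then deduce from (1) rather than reprove from scratch. Since $Q$ is ergodic, $\hat{Q}$-a.e.\ $\nu$ is exact dimensional with $\dim\nu=\dim Q$, so the classical Marstrand--Mattila projection theorem applies to each such $\nu$: for Lebesgue-a.e.\ $\pi\in\Pi_{d,k}$ one has $\dim\pi\nu=\min(k,\dim Q)$. A Fubini swap over the pair $(\nu,\pi)$ turns this into: for a.e.\ $\pi$, for $\hat{Q}$-a.e.\ $\nu$, $\dim\pi\nu=\min(k,\dim Q)$. Feeding in part (1), which identifies $\dim\pi\nu$ with $E(\pi)$ for $\hat{Q}$-a.e.\ $\nu$, yields $E(\pi)=\min(k,\dim Q)$ for a.e.\ $\pi$, the upper bound $E(\pi)\le\min(k,\dim Q)$ being automatic since $\pi$ is $1$-Lipschitz and the target has dimension at most $k$. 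The lower semicontinuity from the first step then lets a.e.\ conclusions be upgraded where needed.

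For part (3) I would let $\mu$ generate $Q$, so by definition the sceneries $\tfrac1N\sum_{k<N}\delta_{\mu^{\Delta^k(x)}}$ converge weakly to $\hat{Q}$ for $\mu$-a.e.\ $x$. Applying the same local entropy averages identity to $\pi\mu$ expresses $\tfrac{1}{N\log(1/\rho)}H(\pi\mu,\Delta^N)$ as an average of $g_\pi$ against these scenery distributions, which tends to $\int g_\pi\,d\hat{Q}=E(\pi)$, giving $\underline{\dim}(\pi\mu,y)\ge E(\pi)$ for $\mu$-a.e.\ $y$ and hence $\dim\pi\mu\ge E(\pi)$. The hard part --- and the reason only an inequality survives here --- is that $g_\pi$, being built from $\nu\mapsto H_{\rho^q}(\pi\nu)$, is \emph{not} weakly continuous: mass sitting on cell boundaries renders it merely lower semicontinuous, so pushing weak convergence of the sceneries through $g_\pi$ only produces a $\liminf\ge$ bound, never equality. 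Taming these boundary effects --- for instance by averaging over translates or dyadic offsets so that asymptotically no mass lands on the discontinuity set --- is the main technical obstacle, and it is precisely this step that converts the abstract ergodic-theoretic limit into the geometric inequality $\dim\pi\mu\ge E(\pi)$.
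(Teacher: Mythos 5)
You should first be aware that the paper itself contains no proof of this statement: it is imported verbatim as Theorem 8.2 of Hochman and Shmerkin \cite{Hochman2010} and used as a black box (it is invoked exactly once, at the end of Section \ref{secproof}, to deduce Theorem \ref{projectionresult} from Theorem \ref{teoprincipal}). So the only meaningful comparison is with the original Hochman--Shmerkin argument. Measured against that, your sketch is directionally faithful: existence of $E(\pi)$ via approximate subadditivity and stationarity of $Q$, part (1) via a local-entropy-averages identity plus the ergodic theorem, part (2) by feeding part (1) into the Marstrand/Hunt--Kaloshin projection theorem for measures and a Fubini swap, and part (3) by testing scenery distributions against the entropy observable are indeed the ingredients of the proof in \cite{Hochman2010}.

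Two steps, however, have genuine gaps as written. First, lower semicontinuity of $E$: a pointwise limit of lower semicontinuous functions need not be lower semicontinuous, so ``passing through the limit in $q$ by Fatou'' is not a valid step. The repair is already contained in your first paragraph but never used: the approximate subadditivity gives $E(\pi)=\sup_q\bigl(E_q(\pi)-C/q\bigr)$, and a supremum of (lower semi)continuous functions is lower semicontinuous; this sup structure is exactly how \cite{Hochman2010} gets semicontinuity. Second, and more seriously, part (3): you correctly identify that $\nu\mapsto H_{\rho^q}(\pi\nu)$ is not weakly continuous, but your proposed fix (``averaging over translates or dyadic offsets'') is vague and not obviously available --- $\mu$ and its dyadic cells are fixed, so there is nothing to average over. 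The actual mechanism is visible in the paper's own definition of generation: a generating measure is required to have convergent \emph{$q$-sparse} scenery distributions for every $q$, precisely so that one can pass to the limit along sparse times against the bounded scale-$\rho^q$ entropy observable, obtain $\dim\pi\mu\geq E_q(\pi)-O(1/q)$, and then let $q\to 0$; that condition has no other purpose in the definition, and your sketch never invokes it. Finally, in part (1), the ``squeeze'' from normalized global entropy to pointwise exact dimensionality is weaker than you suggest: convergence of entropies controls dimension only on average, and the almost-everywhere statement requires an additional law-of-large-numbers/martingale argument along the scenery, which is supplied in \cite{Hochman2010} but absent here.
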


\section{The symbolic dynamical setting}\label{sec1}
\subsection{The $(m,n)$-encoding}Since we will be working with measures on $\mathbb{T}^2$ that are $T_{m,n}$-invariant we will at first try to build CP-chains for the $(m,n)$-adic partition operator $\Delta=\Delta_{m,n}$ that is the product of $\Delta_m$ and $\Delta_n$ (that is, the first $\mathbb{T}$-coordinate is partitioned acccording to $\Delta_m$, the second according to $\Delta_n$). As in \cite{Ferguson2013}, we will find it convenient to translate these CP-chains for measures supported on real boxes to CP-chains for measures supported on symbolic spaces, that is, on spaces of sequences $\Sigma^\NN$ for some finite alphabet $\Sigma$. We won't actually define a symbolic CP-chain (that is carried out in \cite{Furst08}), but the definition arises naturally and is implicit in our construction. Our alphabet will be $\Sigma=\Lambda\times\Lambda'$ where $\Lambda$ and $\Lambda'$ are finite sets of size $m$ and $n$, respectively, which may be identified with $\{0,\dots,m-1\}$ and $\{0,\dots,n-1\}$. By $\Lambda^*$ and $\Lambda'^*$ we will denote the free monoids generated by $\Lambda$ and $\Lambda'$, and we will consider the spaces of infinite sequences $\Omega_+=\Sigma^\NN$ and $\Omega=\Sigma^\ZZ$ endowed with their product topologies and their Borel $\sigma$-algebras. We will write $\pi:\Omega\to\Omega_+$ for the projection that erases the ``past'' coordinates. Small latin letters $a$, $b$ and $c$ will be used for elements in $\Lambda^*$ or $\Lambda'^*$ and $\omega=(\omega^1,\omega^2)$ will denote any element in $\Omega$ or $\Omega_+$. The length of any word $a\in\Lambda^*$ will be written $|a|$, and the restriction of $\omega\in\Omega_+$ (or $\omega^1\in\Lambda$) to its first $k$ values will be denoted $\omega_{|k}$.

We have an onto map $\xi_m:\Lambda^\NN\to[0,1]$ given by $\xi_m(\omega^1)=\sum_{k=1}^\infty\omega_k^1m^{-k}$ and similarly for $\xi_n$, so that we may define $\xi:\Omega_+\to[0,1]^2$ given by $\xi(\omega)=(\xi_m(\omega^1),\xi_n(\omega^2))$.

To each word $a=\omega^1_1\dots \omega ^k_1\in\Lambda^k$ and $b=\omega^2_1\dots \omega^2_k\in\Lambda'^{k}$ we may assign a unique box
$$R(a,b)=[\xi_m(a 0^\infty),\xi_m(a 0^\infty)+m^{-k})\times[\xi_n(b0^\infty),\xi_n(b0^\infty)+n^{-k})\in\Delta^k(\mathbb{T} ^2)$$
where $a0^\infty$ is the infinite sequence which consists of the prefix $a$ followed by an infinite sequence of zeros. This assignment is a bijection from $\Sigma^k$ to $\Delta^k(\mathbb{T}^2)$. Clearly,
$$R(\omega_{|k}^1,\omega_{|k}^2)=\Delta^k(\xi(\omega))$$
and
$$\{\xi(\omega)\}=\bigcap_kR(\omega_{|k}^1,\omega_{|k}^2)$$

\subsection{Bounding eccentricities}

The problem with the sequence of refining partitions $\Delta^k(\mathbb{T}^2))$ is that it is not $\rho$-regular for any $\rho>0$, since the eccentricity of any box $B\in\Delta^k(\mathbb{T}^2)$ (defined as the ratio between its longest and shortest side) is $(n/m)^k$. To bound this eccentricity we will have to slightly change our partition operator to $\Delta'$, where, for an $m$-adic interval $J_1$, a $n$-adic interval $J_2$ and $B=J_1\times J_2$, $\Delta'(B)$ always partitions $J_1$ into $m$ subintervals and then partitions $J_2$ into $n$ subintervals if and only if not doing so would result in a box of eccentricity greater than $n$. Writing $\text{ecc}(B_k)$ for the eccentricity of any box $B_k\in\Delta'^k(\mathbb{T}^2)$, this rule means that, in the first case, the boxes $B_{k+1}$ will have eccentricity $\text{ecc}(B_{k+1})=m.\text{ecc}(B_k)$, and in the second case the boxes $B_{k+1}$ will have eccentricity $\text{ecc}(B_{k+1})=m.\text{ecc}(B_k)/n$. 

This process can be encoded by translating eccentricities into angles via $t=\frac{\log (\text{ecc}B)}{\log (n)}$. Let $\alpha=\log m/\log n$. Our partition operator then sends the angle $t$ to $t+\alpha$ when $t+\alpha<1$, and sends $t$ to $t+\alpha-\log n$ when that condition is not met. Equivalently, our partition operator induces a rotation by angle $\alpha$ on $\mathbb{T}$.

It follows that any element of $\Delta'^k(\mathbb{T}^2)$ will be congruent to
$$R_t=[0,1)\times[0,e^{t\log n})$$
which has eccentricity bounded in $[1,n]$. Thus, our modified partition operator $\Delta'$ is $(1/m)$-regular.

\subsection{Measures and CP dynamics on symbolic spaces}\label{subsecmeasurescp}

Back to the symbolic model, elements of $\Delta'^k(\mathbb{T}^2)$ no longer correspond to words in $\Sigma^k$. They correspond to words in $\Lambda^k\times\Lambda'^{l_k(0)}$, where $l_k(t)=\lfloor t+\alpha k\rfloor$ (we will also write $l_k=l_k(0)$) (see \cite[Section 3.3]{Ferguson2013} for more details). Thus
$$R(\omega_{|k}^1,\omega_{|l_k}^2)=\Delta'^k(\xi(\omega))$$

Let us write $T$ for the shift transformation on any symbolic one-sided or two-sided sequence space.

The measures $\mu'$ on $\mathbb{T}^2$ (or $\RR^2$) we will consider are push-forwards of measures $\mu$ on $\Omega_+$, that is, $\mu'=\mu\circ\xi^{-1}$, and if $\mu$ is $T$-invariant then $\mu'$ is $T_{m,n}$-invariant, since $T_{m,n}\xi=\xi T$.

Let $\mu$ be a $T$-invariant, weak mixing measure on $\Omega_+$. By some abuse of notation $\mu$ will also denote its natural extension to $\Omega$.

Let $\mathcal{F}_i^j$ be the $\sigma$-algebra on $\Omega$ generated by vectors $\omega\mapsto\omega^1_i,\dots,\omega^1_j$ ($i,j\in\ZZ\cup\{-\infty,\infty\}$). Similarly, $\mathcal{G}_i^j$ will be the $\sigma$-algebra generated by $\{\omega^2_k\}_{i\leq k\leq j}$.

Words $a$ in $\Lambda^*$ may be identified with cylinder sets $[a]$. These word cylinders generate the algebra of clopen cylinder sets, denoted $G(\Lambda^*)$. Write $\pi_\Lambda:\Lambda^\ZZ\to\Lambda^\NN$ for the projection that erases past coordinates (and similarly for $\pi_{\Lambda'}$). If $C$ is any measurable subset of $\Lambda^\NN$ (or $\Lambda'^\NN$) we will write $\overline{C}=\pi_\Lambda^{-1}(C)\times\Lambda'^\ZZ$ for the corresponding subset of $\Omega$ (respectively,  $\overline{C}=\Lambda^\mathbb{Z}\times\pi_{\Lambda'}^{-1}(C)$). To any clopen $A\in G(\Lambda^*)\times G(\Lambda'^*)$ associate the following continuous functional on $\mathcal{P}(\Omega_+)$ (continuity follows from the fact that $\mathbbm{1}_A$ is a continuous function on $\Omega_+$ when $A$ is a cylinder): 
$$\phi_{A}(\nu)=\nu(A)$$

We will write $\mathcal{I}$ for the collection of finite unions of intervals in $\mathbb{T}$.

Let $\Xi=\{(\omega,\nu)\in\Omega_+\times \mathcal{P}(\Omega_+):\ \omega\in\text{supp}\ \mu\}$ and $X=\mathbb{T}\times\Xi$.

Given any measure $\nu\in \mathcal{P}(\Omega_+)$, we may consider conditional measures $\nu_{t,k}^\omega$, for $\omega$ in $\text{supp}\ \nu$, defined as
$$\nu_{t,k}^{\omega}(A\times B)=\nu\left(T^{-k}(\overline{A})\cap T^{-l_k(t)}(\overline{B})\mid \ \mathcal{F}_1^k\vee\mathcal{G}_1^{l_k(t)}\right)(\omega)$$ 
on product cylinders $A\times B\in G(\Lambda^*)\times G(\Lambda'^*)$ and extended to the Borel $\sigma$-algebra using Caratheodory's theorem.

Since we will often shift conditional measures and expectations, it will be convenient to recall that, for $\nu$ any $T$-invariant measure on $(X,\mathcal{B})$, the following is true $\nu$-a.s. for any $\sigma$-algebra $\mathcal{F}\subseteq\mathcal{B}$ and any $\mathcal{B}$-measurable $f$

\begin{equation}\label{shiftedcond}\mathbb{E}_\nu[f\circ T^k\mid\mathcal{F}]=\mathbb{E}_\nu[f\mid T^{-k}(\mathcal{F})]\circ T^k
\end{equation}

Write $\{x\}=x-\lfloor x\rfloor$ for any real $x$ and $R_\beta:[0,1)\to[0,1)$ ($\beta$ any real) for the rotation transformation $R_\beta(x)=\{x+\beta\}$.

Let $S:X\to X$ be the \textit{symbolic magnification operator}, defined by
$$S(t,\omega,\nu)=(R_\alpha(t),T_{t,1}(\omega),S_{t,1}(\omega,\nu))$$
where $T_{t,k}(\omega)=(T^k(\omega^1),T^{l_k(t)}(\omega^2)$ and $S_{t,k}:\Xi\to \mathcal{P}(\Omega_+)$ is given by
$S_{t,k}(\omega,\nu)=\nu_{t,k}^\omega$

The following two observations follow from (\ref{shiftedcond}) and the definition of $S$

\begin{observation}\label{obseboba}$$S^k(t,\omega,\nu)=(R_\alpha(t),T_{t,k}(\omega),S_{t,k}(\omega,\nu))$$
\end{observation}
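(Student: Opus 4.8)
The plan is an induction on $k$. The base case $k=1$ is the definition of $S$, so suppose the formula holds for $k-1$ and apply $S$ once more to the triple $\big(R_\alpha^{k-1}(t),\,T_{t,k-1}(\omega),\,\nu_{t,k-1}^\omega\big)$. Writing $s:=R_\alpha^{k-1}(t)=\{t+(k-1)\alpha\}$ and $\omega':=T_{t,k-1}(\omega)$, I must check that the three output coordinates are $R_\alpha^{k}(t)$, $T_{t,k}(\omega)$ and $\nu_{t,k}^\omega$. The first is immediate from $R_\alpha\circ R_\alpha^{k-1}=R_\alpha^{k}$. The remaining two both rest on the cocycle identity
\[ l_k(t)=l_{k-1}(t)+l_1\!\big(R_\alpha^{k-1}(t)\big), \]
which expresses that $k\mapsto l_k(t)$ is the Birkhoff sum of the step $l_1(\cdot)=\lfloor\,\cdot+\alpha\rfloor$ along the orbit of $t$ under the rotation $R_\alpha$; it follows from the elementary relation $\lfloor\{x\}+\alpha\rfloor=\lfloor x+\alpha\rfloor-\lfloor x\rfloor$ taken at $x=t+(k-1)\alpha$. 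Granting it, and writing $\delta:=l_1(s)=l_k(t)-l_{k-1}(t)$, the point coordinate follows at once, since $T\big(T^{k-1}\omega^1\big)=T^k\omega^1$ and $T^{\delta}\big(T^{l_{k-1}(t)}\omega^2\big)=T^{l_k(t)}\omega^2$, i.e. $T_{s,1}\circ T_{t,k-1}=T_{t,k}$.

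The substance is the measure coordinate, namely the identity $\big(\nu_{t,k-1}^\omega\big)_{s,1}^{\omega'}=\nu_{t,k}^\omega$, which I verify on an arbitrary product cylinder $A\times B\in G(\Lambda^*)\times G(\Lambda'^*)$. Abbreviate $\mathcal H_j:=\mathcal F_1^j\vee\mathcal G_1^{l_j(t)}$ for the conditioning algebra at stage $j$, so that $\nu_{t,j}^\omega(\,\cdot\,)=\mathbb E_\nu\big[\,\cdot\circ T_{t,j}\mid\mathcal H_j\big](\omega)$ on cylinders. The same cocycle bookkeeping shows that these algebras are nested and split as
\[ \mathcal H_k=\mathcal H_{k-1}\vee T_{t,k-1}^{-1}\big(\mathcal F_1^1\vee\mathcal G_1^{\delta}\big), \]
since pulling the one-step algebra $\mathcal F_1^1\vee\mathcal G_1^{\delta}$ back through $T_{t,k-1}$ produces precisely the coordinates $\mathcal F_k^k\vee\mathcal G_{l_{k-1}(t)+1}^{l_k(t)}$ that are missing from $\mathcal H_{k-1}$. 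Two ingredients then close the computation. First, the change-of-variables form of the conditioning identity behind (\ref{shiftedcond}): for a measurable map $\Phi$ and any measure $\rho$ one has $\mathbb E_{\Phi_*\rho}[h\mid\mathcal K]\circ\Phi=\mathbb E_\rho[h\circ\Phi\mid\Phi^{-1}\mathcal K]$, which I apply with $\Phi=T_{t,k-1}$ after noting that $\nu_{t,k-1}^\omega$ is the $\Phi$-pushforward of the ordinary conditional measure $\mathbb E_\nu[\,\cdot\mid\mathcal H_{k-1}](\omega)$; the pullbacks of the single-coordinate cylinders $\overline A,\overline B$ are genuine powers of $T$, turning $T^{-1}\overline A\cap T^{-\delta}\overline B$ into $T^{-k}\overline A\cap T^{-l_k(t)}\overline B$ and turning $\mathcal F_1^1\vee\mathcal G_1^\delta$ into the extra block above. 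Second, the tower (consistency) property of disintegrations, which lets me replace conditioning of the stage-$(k-1)$ measure over the pulled-back one-step algebra by conditioning $\nu$ directly over the join $\mathcal H_k$. Combining the two, the left-hand side collapses to $\mathbb E_\nu\big[\mathbbm 1_{T^{-k}\overline A\cap T^{-l_k(t)}\overline B}\mid\mathcal H_k\big](\omega)=\nu_{t,k}^\omega(A\times B)$, as desired.

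I expect the one delicate point to be the handling of the shifts, and precisely here is where I must resist a naive appeal to (\ref{shiftedcond}): the two-speed shift $T_{t,k-1}=\big(T^{k-1},T^{l_{k-1}(t)}\big)$ does not preserve $\nu$, so (\ref{shiftedcond}) cannot be invoked for $T_{t,k-1}$ itself. The remedy is the non-invariant, pushforward form of the conditioning identity above, which needs no stationarity, combined with the observation that, since $\overline A$ and $\overline B$ each constrain only one coordinate, $T_{t,k-1}$ pulls them back through honest powers of the full shift $T$; this is what lets me rewrite $T^{-1}\overline A\cap T^{-\delta}\overline B$ as $T^{-k}\overline A\cap T^{-l_k(t)}\overline B$. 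The last piece of bookkeeping is that evaluating the second-stage conditional expectation at the shifted point $\omega'=T_{t,k-1}(\omega)$ must match evaluating the first-stage one at $\omega$, which is exactly guaranteed by the relation $\mathbb E_{\Phi_*\rho}[\,\cdot\,]\circ\Phi=\mathbb E_\rho[\,\cdot\circ\Phi\,]$; this closes the induction.
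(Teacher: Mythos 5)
Your proof is correct, and it supplies details the paper never writes down: the paper simply asserts that this observation ``follows from (\ref{shiftedcond}) and the definition of $S$.'' Your induction, with the cocycle identity $l_k(t)=l_{k-1}(t)+l_1(R_\alpha^{k-1}(t))$ doing the bookkeeping for both the point and the measure coordinates, is exactly the computation that assertion is hiding (note that both your argument and the intended statement have $R_\alpha^k(t)$ in the first slot; the paper's display is a typo). Where you genuinely depart from the paper's suggested route is in the measure coordinate: instead of invoking (\ref{shiftedcond}) --- which, as you correctly point out, is stated for a $T$-invariant measure and so cannot be applied to the two-speed map $T_{t,k-1}$, nor to the generally non-invariant $\nu$ appearing in the observation --- you use the invariance-free pushforward identity $\mathbb{E}_{\Phi_*\rho}[h\mid\mathcal{K}]\circ\Phi=\mathbb{E}_\rho[h\circ\Phi\mid\Phi^{-1}\mathcal{K}]$ together with the tower property of iterated conditioning, after identifying $\nu_{t,k-1}^\omega$ as the $T_{t,k-1}$-pushforward of the ordinary conditional measure $\nu(\cdot\mid\mathcal{F}_1^{k-1}\vee\mathcal{G}_1^{l_{k-1}(t)})(\omega)$. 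This buys genuine generality: the observation then holds for every $\nu\in\mathcal{P}(\Omega_+)$, which is what the state space $\Xi$ actually demands, and it is the honest way to handle the fact that the two coordinates are shifted at different speeds. One point you should make explicit: evaluating conditional expectations at the specific points $\omega$ and $\omega'=T_{t,k-1}(\omega)$ is legitimate here only because every conditioning algebra in sight ($\mathcal{H}_{k-1}$, $\mathcal{H}_k$, $\mathcal{F}_1^1\vee\mathcal{G}_1^\delta$ and its pullback) is generated by finitely many coordinates, hence atomic, so all conditional expectations have canonical versions given by ratios of cylinder masses, defined at every point of the support; for general sub-$\sigma$-algebras your pointwise identities would hold only almost everywhere and the evaluation at the image point $\omega'$ would require separate justification.
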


\begin{observation}\label{obsemedidas}
$$S_{s,h}\left(T_{t,k}(\omega),\nu_{t,k}^\omega\right)(A\times B)=\mathbb{E}_\nu\left[T^{k+h}(\mathbbm{1}_{\overline{A}})T^{l_{k,h}(t,s)}(\mathbbm{1}_{\overline{B}})\mid \mathcal{F}_1^{k+h}\vee\mathcal{G}_1^{l_{k,h}(t,s)}\right](\omega)$$
where $l_{k,h}(t,s)=l_k(t)+l_h(s)$.
\end{observation}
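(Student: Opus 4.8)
The plan is to recognise the left-hand side---a conditional expectation taken with respect to the already-conditioned measure $\nu_{t,k}^\omega$ and then evaluated at the shifted point $T_{t,k}(\omega)$---as a single conditional expectation of $\nu$ with respect to the join of the two conditioning $\sigma$-algebras. First I would unwind the definitions of $S_{s,h}$ and of the conditional measures, so that the left-hand side becomes $\mathbb{E}_{\nu_{t,k}^\omega}[g\mid Q](T_{t,k}(\omega))$, where $g=T^{h}(\mathbbm{1}_{\overline{A}})\,T^{l_h(s)}(\mathbbm{1}_{\overline{B}})$ and $Q=\mathcal{F}_1^{h}\vee\mathcal{G}_1^{l_h(s)}$.

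The key structural input is that $\nu_{t,k}^\omega$ is, up to the skew shift $T_{t,k}$, the conditional measure of $\nu$ given $P:=\mathcal{F}_1^{k}\vee\mathcal{G}_1^{l_k(t)}$; concretely $\int F\,d\nu_{t,k}^\omega=\mathbb{E}_\nu[F\circ T_{t,k}\mid P](\omega)$ for bounded measurable $F$ and $\nu$-a.e.\ $\omega$, which merely restates the definition of $\nu_{t,k}^\omega$ on product cylinders and extends by density. I would then read off the coordinatewise action of the skew shift, obtaining $g\circ T_{t,k}=T^{k+h}(\mathbbm{1}_{\overline{A}})\,T^{l_{k,h}(t,s)}(\mathbbm{1}_{\overline{B}})$ together with $T_{t,k}^{-1}Q=\mathcal{F}_{k+1}^{k+h}\vee\mathcal{G}_{l_k(t)+1}^{l_{k,h}(t,s)}=:\mathcal{R}$.

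Next I would set $\widetilde{\psi}=\mathbb{E}_{\nu_{t,k}^\omega}[g\mid Q]\circ T_{t,k}$, which is $\mathcal{R}$-measurable, and transport the defining averaging property of this conditional expectation through the integration rule above---this is where (\ref{shiftedcond}), or rather its change-of-variables form for the non-invariant pushforward by $T_{t,k}$, enters---to obtain $\int_{D\cap\widetilde{C}}\widetilde{\psi}\,d\nu=\int_{D\cap\widetilde{C}}g\circ T_{t,k}\,d\nu$ for every $D\in P$ and $\widetilde{C}\in\mathcal{R}$. Since the sets $D\cap\widetilde{C}$ form a $\pi$-system generating $P\vee\mathcal{R}=\mathcal{F}_1^{k+h}\vee\mathcal{G}_1^{l_{k,h}(t,s)}$ and $\widetilde{\psi}$ is measurable with respect to this join, uniqueness of conditional expectation identifies $\widetilde{\psi}$ with $\mathbb{E}_\nu[g\circ T_{t,k}\mid\mathcal{F}_1^{k+h}\vee\mathcal{G}_1^{l_{k,h}(t,s)}]$ for $\nu$-a.e.\ $\omega$; evaluating at $\omega$ and recalling $\widetilde{\psi}(\omega)=\mathbb{E}_{\nu_{t,k}^\omega}[g\mid Q](T_{t,k}(\omega))$ yields the stated identity.

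The hard part is the passage through the conditional-of-a-conditional measure. Because the skew shift $T_{t,k}$ does not preserve $\nu$, one cannot invoke shift-invariance directly, and one must keep track of the fact that $\nu_{t,k}^\omega$ is only specified up to $\nu$-null sets whereas the outer conditional expectation is evaluated at the single point $T_{t,k}(\omega)$. In essence the computation amounts to the tower property $\mathbb{E}_{\nu(\cdot\mid P)}[\,\cdot\mid\mathcal{R}]=\mathbb{E}_\nu[\,\cdot\mid P\vee\mathcal{R}]$ for regular conditional probabilities, which is harmless here precisely because $P$ and $\mathcal{R}$ are generated by finitely many coordinates, so the conditional measures are honest conditional laws of finitely many symbols and all the relevant almost-everywhere statements can be arranged to hold simultaneously.
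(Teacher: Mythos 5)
Your proposal is correct and matches what the paper intends: the paper states this observation without proof, asserting it follows from (\ref{shiftedcond}) and the definition of $S$, and your argument is precisely that verification, with the conditioning-in-stages identity you invoke being the same fact the paper itself records later as (\ref{condout}). Your handling of the pointwise-evaluation subtlety --- all the $\sigma$-algebras involved are generated by finitely many coordinates, so the conditional measures are genuine atom-wise laws and the almost-everywhere identifications can be made simultaneously --- is exactly the right way to make the observation rigorous.
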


To show that the push-forward of some $\mu$ in an appropriate class of measures generates a CP-chain it will suffice to show that for any $f\in C(X)$ the averages
\begin{equation}\label{genericity}\frac{1}{N}\sum_{k=1}^Nf(S^{qk}(t,\omega,\mu))
\end{equation}
converge $\mu$-a.e. when $t=0$ to some constant $c_f$. The linearity of $c_f$ with respect to $f$ and the obvious bound $c_f\leq\|f\|_\infty$ define a continuous functional on $C(X)$ and the Riesz representation theorem then implies the existence of a measure $Q$ in $X$ such that $c_f=\mathbb{E}_Q[f]$. From (\ref{genericity}) it is evident $Q$ is $S$-invariant and we will also show it is $S$-ergodic. Indeed, the following theorem summarizes the results proven in sections \ref{secgenericity} and \ref{secergodicity}.

\begin{theorem}\label{genericity2}Let $\mu$ be a Gibbs invariant measure for some topologically transitive subshift of finite type $Y\subseteq\Omega_+$. Then
there is a distribution $\hat{Q}$ on $\Xi$ such that $Q=\lambda\times\hat{Q}$ is an $S$-invariant and ergodic distribution on $X$ and for every continuous $f\in C(X)$, every $q\in\NN$ and $\mu$-a.e. $\omega\in\Omega_+$ we have
\begin{equation}\frac{1}{N}\sum_{k=1}^Nf(S^{qk}(0,\omega,\mu))\to\mathbb{E}_Q[f]
\end{equation}
\end{theorem}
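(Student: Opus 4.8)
The goal is to establish pointwise convergence of the sparse ergodic averages $\frac{1}{N}\sum_{k=1}^N f(S^{qk}(0,\omega,\mu))$ to a constant, and then to identify the limiting distribution $Q$ as a product $\lambda \times \hat{Q}$ that is $S$-invariant and ergodic. The approach is to reduce the convergence problem for the full functional $f \in C(X)$ to a convergence statement about double ergodic averages of the underlying Gibbs measure, exploiting the product structure of the magnification map $S$ recorded in Observation \ref{obseboba} and Observation \ref{obsemedidas}.

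First I would reduce to a dense class of test functions. By the Stone--Weierstrass theorem it suffices to verify convergence for products $f(t,\omega,\nu) = g(t)\,\phi_A(\nu)$, where $g \in C(\mathbb{T})$ and $A = [a]\times[b]$ is a product cylinder, since linear combinations of such functions are dense in $C(X)$ and the averages are uniformly bounded by $\|f\|_\infty$. For such an $f$, Observation \ref{obsemedidas} shows that $\phi_A(S_{0,qk}(\omega,\mu)) = \mathbb{E}_\mu[T^{qk}(\mathbbm{1}_{\overline A})\,T^{l_{qk}(0)}(\mathbbm{1}_{\overline B}) \mid \mathcal{F}_1^{qk}\vee\mathcal{G}_1^{l_{qk}}](\omega)$, so the summand becomes a product of the rotation factor $g(R_\alpha^{qk}(0))$ and a conditional expectation evaluated at $\omega$. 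The first factor equidistributes on $\mathbb{T}$ by Weyl's theorem, since $\alpha = \log m/\log n$ is irrational; the second factor is precisely where the Gibbs hypothesis must do its work. The crux is to show that these conditional expectations, which condition on the first $qk$ coordinates of $\omega^1$ and the first $l_{qk}$ coordinates of $\omega^2$, are asymptotically well-behaved along the orbit: because $l_{qk} \approx \alpha q k$, the two coordinate blocks grow at incommensurate linear rates, so the average is genuinely a \emph{double} ergodic average indexed by the two growing windows. I expect this to be the main obstacle, and I would handle it by invoking the pointwise convergence of double (or multiparameter) ergodic averages that holds for exact endomorphisms — the memorylessness of the Gibbs measure (encoded in the bounded-distortion estimate of the Bowen Gibbs inequality) guarantees that conditioning on the remote past contributes a vanishing error, reducing the conditional expectations to averages the exact/Kolmogorov ergodic theorems control.

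Granting that reduction, the limit of the conditional-expectation factor will $\mu$-almost everywhere equal a constant $c_{A}$ given by an integral against the product-type stationary measure, while the rotation factor averages to $\int_\mathbb{T} g\, d\lambda$. Because $\alpha$ is irrational the angle orbit $\{R_\alpha^{qk}(0)\}$ is independent, in the averaging sense, of the symbolic factor, so the two limits decouple and we get $\frac{1}{N}\sum_k f(S^{qk}(0,\omega,\mu)) \to \big(\int g\,d\lambda\big)\,c_A = \mathbb{E}_Q[f]$ with $Q = \lambda \times \hat{Q}$; this product form is exactly what the decoupling forces. As explained in Section \ref{subsecmeasurescp}, linearity of $f \mapsto c_f$ together with the bound $c_f \le \|f\|_\infty$ yields via Riesz representation a measure $Q$ with $c_f = \mathbb{E}_Q[f]$, and $S$-invariance of $Q$ is immediate from the shift in the Cesàro average. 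The remaining point is $S$-ergodicity of $Q$: I would obtain this from the fact that the limit $c_f$ is a \emph{constant} (not merely an invariant function) for every $f$ in the dense class, which forces the invariant $\sigma$-algebra of $(X,S,Q)$ to be trivial; equivalently, the decoupling into an irrational rotation on $\mathbb{T}$ (uniquely ergodic, hence ergodic) times the exact symbolic factor yields ergodicity of the product, since an exact system times an ergodic rotation with rationally independent frequency is ergodic. The details of these two ergodicity and genericity arguments are carried out in Sections \ref{secgenericity} and \ref{secergodicity}.
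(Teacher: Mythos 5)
Your plan for the genericity half does follow the paper's route in outline: reduce to a dense class of cylinder-type test functions, use the Gibbs property to neutralize the conditioning, and land on a nonconventional double average at the two speeds $k$ and $l_k$, whose pointwise convergence is supplied by exactness (the paper's Theorem \ref{teo2} and Corollary \ref{coroteo2}). But two things you treat as automatic are precisely where the work lies. First, a small one: your test class $g(t)\phi_{[a]\times[b]}(\nu)$ is not dense in $C(X)$ --- its span is not an algebra (products $\phi_{A_1}\phi_{A_2}$ are not of the same form) and it does not separate points of $X$, which carry an $\omega$-coordinate; you need the paper's class $\mathcal{S}$, with factors $\mathbbm{1}_{[c]}(\omega^1)\mathbbm{1}_{[d]}(\omega^2)$ and products of the $\phi_{[a_i]\times[b_i]}$. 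Second, and substantively: the quantity $\mathbb{E}\left[T^{k}(\mathbbm{1}_{\overline{A}})T^{l_k}(\mathbbm{1}_{\overline{B}})\mid\mathcal{F}_1^{k}\vee\mathcal{G}_1^{l_k}\right]$ is a \emph{single} conditional expectation on a $\sigma$-algebra moving at two incommensurate speeds; before any double ergodic theorem can be invoked one must (a) split it into a product of two conditional expectations plus an error vanishing a.s.\ (the orthogonality argument of Lemma \ref{lema1}), and (b) deal with the fact that the factor involving the moving set $T^{-s_k(t)}(\overline{[a_i]})$, once shifted, is conditioned on a \emph{non-monotone} sequence of $\sigma$-algebras, so neither Doob's theorem nor a Maker-type argument applies. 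It is exactly there that the Gibbs property enters, via the uniform estimate of Lemma \ref{lemagibs} comparing conditioning on the recent past to conditioning on the whole past. Your phrase ``conditioning on the remote past contributes a vanishing error'' names the desired conclusion but neither the obstruction (non-monotonicity) nor the mechanism; likewise the ``decoupling'' of the rotation factor from the symbolic factor is not Weyl plus heuristic independence, it is the content of the joint averages in Theorems \ref{teo1}--\ref{teo2}.

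The ergodicity part, however, has a genuine gap on both routes you propose. (i) Knowing that the particular points $(0,\omega,\mu)$, for $\mu$-a.e.\ $\omega$, are generic for $Q$ with constant limits does \emph{not} force the $S$-invariant $\sigma$-algebra of $(X,S,Q)$ to be trivial: non-ergodic invariant measures do have generic points (concatenate blocks typical for two different ergodic measures with suitably growing lengths), and the generic points you exhibit all lie in $\{t=0\}\times\Xi$, a $Q$-null set since $Q=\lambda\times\hat{Q}$; a null set of generic points says nothing about $Q$-a.e.\ triviality of invariant functions. (ii) The system $(X,S)$ is not a direct product of the rotation with a fixed fiber system: $S(t,\omega,\nu)=(R_\alpha(t),T_{t,1}(\omega),S_{t,1}(\omega,\nu))$ is a \emph{skew product} whose fiber maps depend on $t$, and the fiber dynamics acts on measure-valued states, not on the exact symbolic system $(\Omega_+,\mu,T)$; so ``exact times ergodic rotation is ergodic'' is not applicable. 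What the paper actually does is prove an asymptotic decorrelation statement for the fiber dynamics (Theorem \ref{teomixing}): $\mathbb{E}_Q[f\,\tilde{S}_{s,h}(f^*)]\to\mathbb{E}_Q[f]\,\mathbb{E}_Q[f^*]$ as $h\to\infty$, obtained by feeding $f\,\tilde{S}_{s,h}(f^*)$ back into the genericity theorem (Theorem \ref{teoconvergence}) to get explicit limits and then invoking mixing of the Gibbs measure; combined with $Q=\lambda\times\tilde{Q}$ and ergodicity of $R_\alpha$ this gives $\frac{1}{N}\sum_{k=1}^N\mathbb{E}_Q[f_1S^{-k}(f_2)]\to\mathbb{E}_Q[f_1]\,\mathbb{E}_Q[f_2]$, which is ergodicity. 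Some argument of this kind is required, and your sketch does not supply one.
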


\subsection{From the symbolic to the geometric model} We will now show how Theorem \ref{genericity2} implies Theorem \ref{teoprincipal}. Recall first the following definition.

\begin{definition}Let $(X,\mu,T)$ be a measure-preserving system. We say that the system is {\em exact} if, for any finite partition $\mathcal{A}$,
\begin{equation}\label{defexact}Tail(\mathcal{A}):=\bigwedge_{n=0}^{\infty}\bigvee_{k=n}^\infty T^{-k}(\mathcal{A})=\{\emptyset,X\}\quad\text{(mod $\mu$)}
\end{equation}

If $T$ is an automorphism and (\ref{defexact}) holds, then we say that the $\mathbb{Z}$-system $(X,\mu,T)$ is a {\em Kolmogorov system} (or just {\em K system}).
\end{definition}

We have the following fact (see \cite[Theorem 4.2.12]{Urbanski09})

\begin{lemma}\label{lemagibbsexact} Let $\mu$ be a Gibbs invariant measure for a H\"{o}lder potential $\phi$ on some topologically transitive subshift of finite type $Y$. Then $(Y,\mu,T)$ is exact and its natural extension to a $\mathbb{Z}$-system is Kolmogorov.
\end{lemma}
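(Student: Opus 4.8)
The plan is to prove that a Gibbs measure $\mu$ for a Hölder potential $\phi$ on a topologically transitive SFT $Y$ is exact, and that its natural extension is Kolmogorov. I would begin by recalling that a topologically transitive SFT together with a Hölder potential admits, via the transfer (Ruelle--Perron--Frobenius) operator $\mathcal{L}_\phi$, a unique equilibrium state $\mu$, which coincides with the Gibbs measure characterized by the bounds in the introduction. The key analytic input is the spectral gap for $\mathcal{L}_\phi$ acting on the space of Hölder functions: after normalizing so that the leading eigenvalue is $1$ and the leading eigenfunction is $h$ with $\mathcal{L}_\phi h = h$, one has
\begin{equation}\label{eq:spectralgap}
\left\| \mathcal{L}_\phi^k f - h \int f \, d\mu \right\| \leq C \theta^k \|f\|
\end{equation}
for some $\theta \in (0,1)$ and all Hölder $f$, where the norm is the Hölder norm. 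This exponential mixing is the engine behind exactness.

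The first step is to reduce exactness to a statement about the transfer operator. Exactness of $(Y,\mu,T)$ is equivalent to the assertion that for every $f \in L^2(\mu)$ the conditional expectations $\mathbb{E}_\mu[f \mid T^{-k}\mathcal{B}]$ converge in $L^2$ to the constant $\int f\,d\mu$ as $k \to \infty$. The standard identity relating conditional expectation onto the "future" $\sigma$-algebra $T^{-k}\mathcal{B}$ to the transfer operator is
\begin{equation}\label{eq:condexp}
\mathbb{E}_\mu[f \mid T^{-k}\mathcal{B}] = \frac{\mathcal{L}_\phi^k(f \tilde h)}{\tilde h} \circ T^k,
\end{equation}
where $\tilde h$ is the appropriate density making $\mu$ invariant; this holds $\mu$-a.e. for $f$ in a dense class. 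First I would establish \eqref{eq:condexp} for indicator functions of cylinders and extend by density. Then, applying the spectral gap \eqref{eq:spectralgap} to the numerator shows $\mathcal{L}_\phi^k(f\tilde h) \to \tilde h \int f\,d\mu$ uniformly, so the right-hand side of \eqref{eq:condexp} converges to $\int f\,d\mu$ in $L^2(\mu)$. Since the tail $\sigma$-algebra $\mathrm{Tail}(\mathcal{A}) \subseteq \bigcap_k T^{-k}\mathcal{B}$ and any tail-measurable function is fixed by every conditional expectation $\mathbb{E}_\mu[\,\cdot\mid T^{-k}\mathcal{B}]$, the convergence forces every such function to be $\mu$-a.e. constant. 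This yields \eqref{defexact} for the generating partition into cylinders, and a monotone-class or density argument extends it to arbitrary finite partitions, giving exactness.

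For the second claim, I would invoke the classical theorem (due to Rokhlin and Sinai) that the natural extension of an exact endomorphism is a Kolmogorov automorphism. Concretely, exactness of the one-sided system is precisely the condition that the natural extension $(\hat Y, \hat\mu, \hat T)$ has trivial tail, which is the Kolmogorov property; alternatively one checks directly that the Pinsker partition of the natural extension is trivial. Since this is a general measure-theoretic fact independent of the Gibbs structure, I would cite it rather than reprove it, matching the citation to \cite[Theorem 4.2.12]{Urbanski09} already supplied.

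The main obstacle I anticipate is establishing the transfer-operator identity \eqref{eq:condexp} cleanly, together with the precise normalization of the eigenfunction $h$ and the Radon--Nikodym density $\tilde h$: getting the conditional expectation onto the \emph{future} algebra $T^{-k}\mathcal{B}$ expressed correctly in terms of $\mathcal{L}_\phi^k$ requires care, because the transfer operator is naturally the adjoint of composition with $T$, and one must track how the Gibbs density interacts with the pullback. Once that identity is in place the spectral gap does the rest almost mechanically. Since the statement is cited as a known fact from \cite{Urbanski09}, the cleanest route in the paper is simply to reference it, but the sketch above is the argument I would reconstruct if a self-contained proof were wanted.
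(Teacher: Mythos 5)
The paper does not prove this lemma at all: it is stated as a known fact with the citation to \cite[Theorem 4.2.12]{Urbanski09}, exactly as you guessed in your closing paragraph, so your recommendation to simply cite coincides with what the paper actually does. Your reconstruction, to which the paper has no counterpart, is the standard argument and is sound in outline: the duality identity expressing $\mathbb{E}_\mu[f\mid T^{-k}\mathcal{B}]$ through the $k$-th power of the normalized transfer operator, the spectral gap forcing these conditional expectations to converge in $L^2$ to $\int f\,d\mu$, the resulting triviality of $\bigcap_k T^{-k}\mathcal{B}$ (hence of $Tail(\mathcal{A})\subseteq\bigcap_k T^{-k}\mathcal{B}$ for every finite partition $\mathcal{A}$), and Rokhlin's theorem that the natural extension of an exact endomorphism is Kolmogorov. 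Two small corrections are in order. First, your displayed gap estimate mixes normalizations: for $\mathcal{L}_\phi$ itself, with eigenfunction $h$ and eigenmeasure $\nu$ (so $d\mu=h\,d\nu$), the rank-one projection is $f\mapsto h\int f\,d\nu$, not $h\int f\,d\mu$; your subsequent application $\mathcal{L}_\phi^k(f\tilde h)\to\tilde h\int f\,d\mu$ is nevertheless correct, precisely because $\int fh\,d\nu=\int f\,d\mu$, but the estimate and its application should be stated in a single convention (cleanest: pass to $\hat{\mathcal{L}}f=e^{-P}\mathcal{L}_\phi(hf)/h$, whose leading eigenfunction is the constant $1$ and whose dual fixes $\mu$). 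Second, the closing ``monotone-class or density argument'' over partitions is superfluous: once $\bigcap_k T^{-k}\mathcal{B}$ is trivial, the inclusion you already noted yields (\ref{defexact}) for all finite partitions simultaneously. Finally, the phrase ``exactness \dots is precisely the condition that the natural extension \dots has trivial tail'' overstates an equivalence --- an invertible K-automorphism is its own natural extension but is never exact --- though only the implication exact $\Rightarrow$ Kolmogorov extension is needed here, and that is Rokhlin's theorem.
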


We are now ready to show the implication. Let $E$ be the set of boundaries of all boxes obtained through the partition sequence $\Delta^k(\mathbb{T}^2)$, i.e.,
$$E=\bigcup_{a\in\Lambda^*,b\in\Lambda'^*}\partial R(a,b)$$
Let $\mu'$ be the push-forward of $\mu$, that is $\mu'=\xi_*(\mu)$ (where for any $g$, $g_*(\mu)=\mu\circ g^{-1}$) for some shift-invariant $\mu$ on $\Omega_+$ for which the system $(\Omega_+,\mu,T)$ is exact (by Lemma \ref{lemagibbsexact} this applies in particular to Gibbs measures).

Let us assume first that $\mu$ assigns positive measure to some set of the form $\{\omega\in\Omega_+:\ \omega^1=\tilde{a}0^\infty\}$ for a fixed $\tilde{a}\in\Lambda^*$, which is the same as saying that $\mu'$ assigns positive measure to some vertical line that intersects the horizontal axis at an $m$-adic point. This also implies $\mu'(E)>0$.

Set $A_k=\{\omega\in\Omega_+:\ \omega^1=a0^\infty \text{ for some }a\in\Lambda^k\}$. Notice that the $A_k$ form a monotonically increasing sequence of events. By our assumption, for any $k\geq |\tilde{a}|$, $\mu(A_k)>0$. Then $A=\bigcup_{k}A_k$ is a tail event, so that by exactness we must have $\mu(A)=1$, which means almost every sequence has its $m$-coordinate eventually terminated by an infinite sequence of zeros. Equivalently, $\mu'$ is supported on $m$-adic vertical lines.

Now, take any word $a=a_1\dots a_l\in\Lambda^*$ containing at least one nonzero symbol. By invariance, for any $k\in\NN$,
$$\mu(\{\omega:\ \omega^1_1=a_1\dots \omega_l^1=a_l\})=\mu(\{\omega:\ \omega^1_{1+k}=a_1\dots \omega_{l+k}^1=a_l\})\leq 1-\mu(A_k)$$
and given any $\epsilon>0$ we may pick $k_0$ such that for all $k\geq k_0$, $\mu(A_k)\geq 1-\epsilon$, so that, by choosing any arbitrarily small $\epsilon$ we get $\mu(\{\omega:\ \omega^1_1=a_1\dots \omega_l^1=a_l\})=0$. Hence, $\mu$ is supported on sequences whose $m$-coordinate is $0^\infty$. Equivalently, $\mu'$ is a supported on a single vertical line, and the problem reduces to that of a single $T_n$-invariant measure on the unit interval, i.e., a conformal measure, for which genericity results have already been established under much more general assumptions in \cite{HochmanJEMS}.
The same reasoning can be carried out for horizontal $n$-adic lines, and these two cases exhaust the possible cases in which $\mu'(E)>0$.

Thus, we may now assume that $\mu'(E)=0$.

Since the modified partition operator $\Delta'$ is $(1/m)$-regular we know all boxes in the sequence $\Delta'^k([0,1))$ will lie within some compact box, which we denote $\overline{R}$, and the measures $\mu'^{\Delta'^k(x)}$ for $\mu'$-a.e. $x$ will be supported in $\overline{R}$.

Define $P:X\to\mathcal{P}(\overline{R})$ as $P(t,\omega,\nu)=S_{t*}(\xi_*(\nu))$, where $S_t:\RR^2\to\RR^2$ is the linear transformation that maps $[0,1]$ to the normalized box of eccentricity $e^{t\log n}$, $R_t^*$. Since $\mu'(E)=0$ we have 
$$\mu'^{\Delta'^{qk}(\xi(\omega))}=\mu'^{R\left(\omega^1_{|qk},\omega^2_{|l_{qk}}\right)}=S_{\{qk\alpha\}}\left(\xi_*(\mu^\omega_{0,qk})\right)$$

Notice that if $\mu'(E)>0$ then $\mu'(R(\omega^1_{|k},\omega^2_{|l_k}))$ may be strictly greater than $\mu([\omega^1_{|k}]\times[\omega^2_{|k}])$, since the box may contain a segment of strictly positive measure on its boundary, and these elements, having two possible representations, will add the weight of the cylinders of both representations to the pushforward measure $\mu'$.

Now, $\mu'^{\Delta'^{qk}(\xi(\omega))}=P(S^{qk}(0,\omega,\mu))$ and while $P$ is not continuous in the $\mathbb{T}$-coordinate when $t=0$ this is the only way a discontinuity can occur, so that for any continuous $f\in C(\mathcal{P}(\overline{R}))$ the composition $f\circ P$ will be continuous on $X$ except for a set of measure zero. We use the following result, which is proved in \cite[Theorem 2.7]{Billings}
\begin{lemma}Let $K$ be a compact metric space. If $\nu_N\to\nu$ weakly in $\mathcal{P}(K)$ and $f:K\to\RR$ is $\nu$-almost everywhere continuous then $\int fd\nu_N\to\int fd\nu$.
\end{lemma}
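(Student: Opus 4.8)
The plan is to reduce the assertion to the convergence of the $\nu_N$- and $\nu$-measures of the level sets of $f$, where the weak convergence hypothesis can be exploited through the portmanteau theorem. Since $K$ is compact and the relevant $f$ is bounded (being $\nu$-a.e. continuous alone does not force integrability, so I take $f$ bounded, as holds in every application made here), I would first split $f=f^+-f^-$ and treat each part separately, so that it suffices to prove the claim for a bounded nonnegative $\nu$-a.e. continuous function with $0\le f\le M$.

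For such an $f$ I would invoke the layer-cake formula
\[
\int_K f\, d\rho=\int_0^M \rho(\{f>t\})\,dt,
\]
valid for any probability measure $\rho$ on $K$. Applying it with $\rho=\nu_N$ and $\rho=\nu$, the problem becomes showing $\int_0^M \nu_N(\{f>t\})\,dt\to\int_0^M \nu(\{f>t\})\,dt$; since the integrands are bounded by $1$ on the finite interval $[0,M]$, the bounded convergence theorem reduces this further to the pointwise statement $\nu_N(\{f>t\})\to\nu(\{f>t\})$ for Lebesgue-almost every $t\in(0,M)$.

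The heart of the argument, and the step I expect to be the main obstacle, is to identify, for almost every $t$, the set $A_t=\{f>t\}$ as a $\nu$-continuity set, i.e. to show $\nu(\partial A_t)=0$. Writing $D_f$ for the Borel set of discontinuities of $f$, so that $\nu(D_f)=0$ by hypothesis, I would establish the inclusion $\partial A_t\subseteq D_f\cup\{f=t\}$: if $x$ is a continuity point of $f$ lying on $\partial A_t$, then every neighbourhood of $x$ meets both $\{f>t\}$ and $\{f\le t\}$, which by continuity forces $f(x)=t$. Hence $\nu(\partial A_t)\le\nu(\{f=t\})$. As the level sets $\{f=t\}$ are pairwise disjoint over $t$, at most countably many of them can carry positive $\nu$-mass, so $\nu(\{f=t\})=0$, and therefore $\nu(\partial A_t)=0$, for all but countably many $t$.

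For each such $t$ the portmanteau theorem applies: weak convergence $\nu_N\to\nu$ yields $\limsup_N \nu_N(\overline{A_t})\le\nu(\overline{A_t})$ and $\liminf_N \nu_N(\mathrm{int}\,A_t)\ge\nu(\mathrm{int}\,A_t)$, each obtained by sandwiching the indicator of a closed, respectively open, set between continuous functions and using the definition of weak convergence. Since $\nu(\partial A_t)=0$ gives $\nu(\overline{A_t})=\nu(\mathrm{int}\,A_t)=\nu(A_t)$, and $\nu_N(\mathrm{int}\,A_t)\le\nu_N(A_t)\le\nu_N(\overline{A_t})$, these inequalities squeeze $\nu_N(A_t)\to\nu(A_t)$. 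This holds for almost every $t$, which is exactly what the earlier reduction required, completing the proof. Everything apart from the boundary estimate of the third paragraph is a routine combination of the layer-cake formula, bounded convergence, and the standard portmanteau inequalities.
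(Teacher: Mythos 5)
Your proof is correct, but it takes a genuinely different route from the paper: the paper does not prove this lemma at all, it simply cites Billingsley \cite[Theorem 2.7]{Billings} (the mapping theorem, from which the statement follows by pushing the $\nu_N$ forward under $f$ and integrating the identity function over a compact interval). Your argument is instead self-contained and elementary, and every step checks out: the reduction to nonnegative bounded $f$, the layer-cake formula, the bounded convergence reduction to $\nu_N(\{f>t\})\to\nu(\{f>t\})$ for a.e.\ $t$, the key inclusion $\partial\{f>t\}\subseteq D_f\cup\{f=t\}$, the countability argument showing $\nu(\{f=t\})=0$ for all but countably many $t$, and the portmanteau squeeze on continuity sets. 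A point in your favor: the boundedness hypothesis you add is not cosmetic --- the lemma as stated in the paper is literally false without it (take $\nu$ Lebesgue on $[0,1]$, $f(x)=x^{-1/2}$ with $f(0)=0$, and $\nu_N=(1-1/N)\nu+(1/N)\delta_{N^{-4}}$, so that $\int f\,d\nu_N\to\infty$ while $\int f\,d\nu=2$), and it does hold in the paper's application, where the relevant function is $f\circ P$ with $f$ continuous on the compact space $\mathcal{P}(\overline{R})$; so your version is the one that should be stated. One shared implicit assumption, not a gap specific to you: $f$ must be Borel measurable for $\int f\,d\nu_N$ to be defined ($\nu$-a.e.\ continuity only gives measurability for the $\nu$-completion), which again is automatic in the application. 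In short, the citation buys brevity; your proof buys self-containedness and makes visible exactly which hypotheses carry the result.
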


It follows from Theorem \ref{genericity2} that for any Gibbs $\mu$ and $\mu$-a.e. $\omega$
$$\frac{1}{N}\sum_{k=0}^{N-1}(f\circ P)(0,\omega,\mu)\to\int f\circ Pd(\lambda\times P)$$

This completes the proof of Theorem \ref{teoprincipal} from Theorem \ref{genericity2}.


\section{Ergodic theorems}\label{secergodictheorems}

We will now adapt to our needs some standard theorems on the convergence of nonconventional ergodic averages. Throughout this section $(\Omega,\mu,T)$ will denote a measure-preserving system on a metric space $\Omega$ with its Borel $\sigma$-algebra and $\lambda$ will denote Lebesgue measure on $\mathbb{T}$. As usual, for any measurable $f$, we will write $T(f)$ for the Koopman's operator $T(f)=f\circ T$.

\begin{lemma}\label{lemaaux}Let $(\Omega,\mu,T)$ be weak mixing , let $\alpha\in[0,1)$ be irrational and let $F:\mathbb{T}\times\Omega\to\RR$ be an integrable function (for the product measure $\lambda\times\mu$) such that the family of functions $(F(\cdot,\omega))_{\omega\in\Omega}$ is uniformly equicontinuous. Then there is a full measure $\Omega'\subseteq\Omega$ such that for all $\omega\in\Omega'$ and all $t\in\mathbb{T}$

$$\frac{1}{N}\sum_{k=1}^NT_\alpha^{k}(F)(t,\omega)\to\mathbb{E}_{\lambda\times\mu}(F)$$
where $T_\alpha=R_{\alpha}\times T$.

\end{lemma}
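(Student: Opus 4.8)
The plan is to recognize $T_\alpha=R_\alpha\times T$ as an ergodic product system, apply the Birkhoff ergodic theorem to obtain convergence of the averages for $\lambda\times\mu$-almost every pair $(t,\omega)$, and then use the uniform equicontinuity of $F$ in the first variable to upgrade this to convergence for almost every $\omega$ and \emph{every} $t$.

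First I would record the ergodicity of $T_\alpha$. Since $\alpha$ is irrational, $(\mathbb{T},\lambda,R_\alpha)$ is ergodic; and since $(\Omega,\mu,T)$ is weak mixing, its product with any ergodic system is again ergodic. Hence $(\mathbb{T}\times\Omega,\lambda\times\mu,T_\alpha)$ is ergodic, and by the Birkhoff ergodic theorem the averages $A_N(t,\omega):=\frac1N\sum_{k=1}^N F(R_\alpha^k t,T^k\omega)$ converge to $\mathbb{E}_{\lambda\times\mu}(F)$ for $\lambda\times\mu$-almost every $(t,\omega)$. By Fubini there is then a full-measure set $\Omega'\subseteq\Omega$ such that for every $\omega\in\Omega'$ the convergence $A_N(t,\omega)\to\mathbb{E}_{\lambda\times\mu}(F)$ holds for $\lambda$-almost every $t$, and in particular for a dense set of $t$.

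The key point is that for each fixed $\omega$ the sequence $(A_N(\cdot,\omega))_N$ is uniformly equicontinuous on $\mathbb{T}$. Indeed, because $R_\alpha$ is an isometry of $\mathbb{T}$, the distance between $R_\alpha^k t$ and $R_\alpha^k t'$ equals the distance between $t$ and $t'$; so the uniform equicontinuity of the family $(F(\cdot,\omega))_{\omega}$ gives, for each $\varepsilon>0$, a $\delta>0$ such that $|A_N(t,\omega)-A_N(t',\omega)|\le\varepsilon$ whenever the distance from $t$ to $t'$ is less than $\delta$, simultaneously for all $N$ and all $\omega$.

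Finally I would combine these two facts. Fix $\omega\in\Omega'$ and write $c=\mathbb{E}_{\lambda\times\mu}(F)$. Given $t\in\mathbb{T}$ and $\varepsilon>0$, choose $\delta$ as above and a point $t'$ in the dense convergence set with distance from $t$ less than $\delta$. Then $|A_N(t,\omega)-c|\le|A_N(t,\omega)-A_N(t',\omega)|+|A_N(t',\omega)-c|\le\varepsilon+|A_N(t',\omega)-c|$, and letting $N\to\infty$ yields $\limsup_N|A_N(t,\omega)-c|\le\varepsilon$. As $\varepsilon$ was arbitrary, $A_N(t,\omega)\to c$ for every $t$, which is the claim. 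I expect the main obstacle to be exactly this passage from almost-everywhere to everywhere convergence in $t$: the equicontinuity hypothesis, together with the fact that the rotation preserves distances, is precisely what converts the Birkhoff a.e.\ statement into convergence for \emph{all} $t$, and without it one could only conclude convergence for $\lambda$-almost every $t$.
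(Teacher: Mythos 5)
Your proposal is correct and takes essentially the same approach as the paper: ergodicity of the product system $(\mathbb{T}\times\Omega,\lambda\times\mu,T_\alpha)$ from weak mixing, the Birkhoff theorem, and the fact that $R_\alpha$ is an isometry combined with uniform equicontinuity to upgrade almost-everywhere convergence in $t$ to convergence for all $t$. The only cosmetic difference is that the paper fixes a single countable dense set $\mathbb{T}'\subseteq\mathbb{T}$ of good times and takes $\Omega'$ to be the intersection of the corresponding full-measure sections, whereas you apply Fubini to obtain, for each $\omega\in\Omega'$, an $\omega$-dependent full-measure (hence dense) set of good $t$; both yield the same conclusion.
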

\begin{proof}
The system $(\mathbb{T}\times\Omega,\lambda\times\mu,T_\alpha)$, is ergodic, since $(\Omega,\mu,T)$ is weak mixing. Then there is a full $\lambda\times\mu$-measure subset $X\subseteq \mathbb{T}\times\Omega$ on which
$$\frac{1}{N}\sum_{k=1}^NT_\alpha^{k}(F)(t,\omega)\to\mathbb{E}_{\lambda\times\mu}(F)$$

In particular, there is a countable and dense subset  $\mathbb{T}'\subseteq\mathbb{T}$ such that for all $t\in\mathbb{T}'$ the section $X_t=\{\omega\in\Omega:\ (t,\omega)\in X\}$ has full $\mu$-measure. We take $\Omega'=\bigcap_{t\in\mathbb{T}'}X_t$, which has full measure.

Now, if we fix $\epsilon>0$, by uniform equicontinuity there is some $\delta>0$ such that if $|t-t'|<\delta$ then $|F(t,\omega)-F(t',\omega)|<\epsilon$ for all $\omega$. Let $t\in\mathbb{T}$ and take some $t'\in\mathbb{T}'$ such that $|t-t'|<\delta$. Notice that this implies $|R_\alpha^k(t)-R_\alpha^k(t')|<\delta$ for all $k$ (and hence $|F(R_\alpha^k(t),T^k(\omega))-F(R_\alpha^k(t'),T^k(\omega))|<\epsilon$) since rotation is an isometry. Then, for any $\omega\in\Omega'$
\begin{align}\frac{1}{N}\left|\sum_{k=1}^NT_\alpha^{k}(F)(t,\omega)-\mathbb{E}_{\lambda\times\mu}(F)\right|&\leq\frac{1}{N}\left|\sum_{k=1}^NT_\alpha^{k}(F)(t,\omega)-T_\alpha^{k}(F)(t',\omega)\right|+\nonumber\\
&+\frac{1}{N}\left|\sum_{k=1}^NT_\alpha^{k}(F)(t',\omega)-\mathbb{E}_{\lambda\times\mu}(F)\right|\leq\nonumber\\
&\leq\epsilon+\frac{1}{N}\left|\sum_{k=1}^NT_\alpha^{k}(F)(t',\omega)-\mathbb{E}_{\lambda\times\mu}(F)\right|
\end{align}
And since $(t',\omega)\in X$ the $N$ can be chosen large enough so that the summand on the right is arbitrarily small.
\end{proof}

\begin{theorem}\label{teo1}Let $(\Omega,\mu,T)$ be weak mixing, let $\alpha\in[0,1)$ be irrational. Let $I\in\mathcal{I}$ and $F$ be some bounded, measurable function.

Then there is a full measure $\Omega'\subseteq\Omega$ such that, for all $t\in[0,1)$,
$$\frac{1}{N}\sum_{k=1}^N\mathbbm{1}_I(R_\alpha(t))T^{l_k(t)}(F)(\omega)\to|I|\ \mathbb{E}\left[F\right]$$
for all $\omega\in\Omega'$.
\end{theorem}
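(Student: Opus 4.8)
The plan is to recognise the weighted sum as an honest Birkhoff average for a single measure-preserving skew product, to prove that this skew product is ergodic using the weak mixing of $T$, and finally to upgrade the resulting almost-everywhere statement to one valid for every $t$.

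First I would introduce the skew product $\tilde T_\alpha\colon\mathbb{T}\times\Omega\to\mathbb{T}\times\Omega$, $\tilde T_\alpha(s,\omega)=(R_\alpha(s),T^{\lfloor s+\alpha\rfloor}\omega)$; note that this is \emph{not} the product $T_\alpha=R_\alpha\times T$ of Lemma \ref{lemaaux}. Since $s\in[0,1)$ and $\alpha\in[0,1)$ we have $\lfloor s+\alpha\rfloor\in\{0,1\}$, and an easy induction gives $\tilde T_\alpha^k(t,\omega)=(R_\alpha^k(t),T^{l_k(t)}\omega)$. Thus, with $\tilde F(s,\omega)=\mathbbm{1}_I(s)F(\omega)$, the average in the statement equals $\frac1N\sum_{k=1}^N\tilde F(\tilde T_\alpha^k(t,\omega))$. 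Splitting $\mathbb{T}$ into $[0,1-\alpha)$ and $[1-\alpha,1)$ and using the $T$-invariance of $\mu$ shows that $\lambda\times\mu$ is $\tilde T_\alpha$-invariant.

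The crux is the ergodicity of $(\mathbb{T}\times\Omega,\lambda\times\mu,\tilde T_\alpha)$, and this is the step I expect to be the main obstacle: because the cocycle $\phi=\mathbbm{1}_{[1-\alpha,1)}$ satisfies $\int\phi\,d\lambda=\alpha\neq1$ it is not cohomologous to the constant cocycle $1$, so $\tilde T_\alpha$ is genuinely not isomorphic to $R_\alpha\times T$ and Lemma \ref{lemaaux} cannot simply be quoted — this is precisely the ``different speeds'' phenomenon flagged in the introduction. To prove ergodicity I would observe that, after rescaling the fibre by $\alpha$, $\tilde T_\alpha$ is the time-one map of the suspension flow of $(\Omega,\mu,T)$ under the constant roof $1/\alpha$. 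Since $T$ is weak mixing it has no non-constant eigenfunctions, so the only $L^2$ eigenfunctions of the flow are those coming from the constant on $\Omega$; in the rescaled coordinate these are the Fourier modes $e^{2\pi i n s}$ ($n\in\ZZ$), with time-one eigenvalue $e^{2\pi i n\alpha}$. As $\alpha$ is irrational this equals $1$ only for $n=0$, so $1$ is a simple eigenvalue of $\tilde T_\alpha$ and the system is ergodic. (Alternatively one can argue directly: an invariant set has fibres obeying $A_s=T^{-\phi(s)}A_{s+\alpha}$, so $s\mapsto\mu(A_s)$ is $R_\alpha$-invariant hence a.e.\ a constant $c$, and feeding the mean-zero field $\mathbbm{1}_{A_s}-c$ into the weak-mixing estimate forces $c\in\{0,1\}$.) Granting ergodicity, Birkhoff's theorem gives, for $(\lambda\times\mu)$-a.e.\ $(t,\omega)$, convergence of $\frac1N\sum_{k=1}^N\tilde F(\tilde T_\alpha^k(t,\omega))$ to $\int\tilde F\,d(\lambda\times\mu)=|I|\,\mathbb{E}[F]$.

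It remains to pass from a.e.\ $t$ to every $t$ with a single full-measure $\Omega'$, where the discontinuity of $\mathbbm{1}_I$ requires care. Writing $F=F^+-F^-$ I reduce to $F\ge0$ and, given $\epsilon>0$, sandwich $g^-\le\mathbbm{1}_I\le g^+$ by continuous functions on $\mathbb{T}$ with $\int(g^+-g^-)\,d\lambda<\epsilon$ (possible since $\lambda(\partial I)=0$). For a continuous weight $g$ the family $(s\mapsto g(s)F(\omega))_\omega$ is uniformly equicontinuous, so, exactly as in Lemma \ref{lemaaux}, I pick a countable dense set of ``good'' $t'$ from Fubini applied to the Birkhoff set and intersect the corresponding full-measure fibres to build $\Omega'$. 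The one new feature is that $l_k(t)$ itself depends on $t$: for $t,t'$ close one has $l_k(t)=l_k(t')$ outside a set of $k$ of asymptotic density $|t-t'|$ (Weyl equidistribution of $\{t+k\alpha\}$), and on that set the two summands differ by at most $2\|g\|_\infty\|F\|_\infty$. Hence the averages at $t$ and at $t'$ differ, in the limit, by at most $\mathrm{osc}_g(|t-t'|)\,\|F\|_\infty+2\|g\|_\infty\|F\|_\infty\,|t-t'|$, which tends to $0$ as $t'\to t$, transferring convergence to every $t$. Applying this to $g^\pm$ and using the sandwich yields $(|I|-\epsilon)\mathbb{E}[F]\le\liminf\le\limsup\le(|I|+\epsilon)\mathbb{E}[F]$; letting $\epsilon\to0$ along a countable sequence (and intersecting countably many full-measure sets) gives the claim.
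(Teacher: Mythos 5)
Your proposal is correct, but it takes a genuinely different route from the paper's. The paper first reduces to intervals with $|I|<\alpha$ and then \emph{re-indexes} the weighted sum: the times $k$ at which $R_\alpha^k(t)\in I$ are put in bijection with integers $n$ satisfying $\{c(t)+n/\alpha\}\in I'=(1-|I|/\alpha,1)$, where $c(t)=\{(a-t)/\alpha\}$, which converts the average into a plain Birkhoff sum for the \emph{product} system $T'=R_{1/\alpha}\times T$ started at $(c(t),\omega)$. Ergodicity of $T'$ is then the standard fact that an irrational rotation times a weak mixing system is ergodic, and---crucially---this time change pushes all $t$-dependence into the rotation coordinate: the fibre iterate in the transformed sum is $T^n\omega$, independent of $t$, so Lemma \ref{lemaaux} (equicontinuity plus the isometry property of the rotation) applies verbatim, followed by the same Riemann-integrability sandwich you use. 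You instead keep the sum as it stands, recognize it as a Birkhoff average of the skew product $\tilde T_\alpha(s,\omega)=(R_\alpha(s),T^{\lfloor s+\alpha\rfloor}\omega)$, i.e.\ the time-one map of the constant-roof-$1/\alpha$ suspension of $T$, and prove its ergodicity spectrally: weak mixing forces any flow eigenfunction to be a pure fibre mode $e^{2\pi ins}$ with time-one eigenvalue $e^{2\pi in\alpha}$, and irrationality of $\alpha$ plus the fact that a time-one-invariant function has flow-spectral measure supported on the countable (hence purely atomic) set $\ZZ$ yields simplicity of the eigenvalue $1$. That argument is right, but it is the least elementary ingredient of your proof, replacing a product-ergodicity fact by the spectral theory of one-parameter unitary groups. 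Your transfer to all $t$ also differs by necessity: since in your formulation the fibre iterate $l_k(t)$ depends on $t$, equicontinuity of the weight alone says nothing, and you correctly supplement the sandwich with the Weyl-equidistribution estimate that $\{k\le N:\ l_k(t)\ne l_k(t')\}$ has asymptotic density $|t-t'|$ (valid for \emph{every} $t$, by unique ergodicity of the rotation)---this is precisely the difficulty that the paper's re-indexing was designed to dissolve. In effect the two proofs are dual descriptions of one object: the paper works with the cross-section (induced) picture, you work with the tower itself; the paper's version buys a shorter ergodicity step and a reusable every-$t$ lemma, yours buys a more transparent identification of the underlying dynamics at the cost of heavier machinery.
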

\begin{proof}Notice that since $I$ is a finite union of intervals we can write $\mathbbm{1}_I=\sum_i^r\mathbbm{1}_{I_i}$ for some $r\geq 1$ and intervals $I_i$ such that $|I_i|\leq \epsilon<\alpha$. Thus, by linearity, we may assume that $I=(a,a+\epsilon)$ is an interval such that $\epsilon<\alpha$.

Write $k_1(t)<k_2(t)<\dots<k_i(t)<\dots$ for all $k$ such that $t+\alpha k\in I$. Notice that for any $k\in\NN$ there is some $i$ such that $k=k_i(t)$ if and only if there is some (unique) $n\in\NN$ such that
\begin{equation}\label{rotation}
n+a-t\leq k\alpha\leq n+a-t+\epsilon
\end{equation}
We will write $n_i(t)$ for the unique $n\in\NN$ that satisfies (\ref{rotation}) with $k=k_i(t)$. Then $n_i(t)=\lfloor t+\alpha k_i(t)\rfloor$, and since $|I|<\alpha$ we have $l_{k_i}(t)=\lfloor t+\alpha k_i(t)\rfloor\neq\lfloor t+\alpha k_j(t)\rfloor=l_{k_j}(t)$ when $i\neq j$. Write $c(t)=\{(a-t)/\alpha\}$. It follows from these observations that $n=[t+\alpha k_i]$ for some $i$ if and only if $\{c(t)+n/\alpha\}\in (1-\epsilon/\alpha,1)$.

Consider the system $(\mathbb{T}\times\Omega,\lambda\times\mu,T')$ with $T'=R_{\frac{1}{\alpha}}\times T$. This system is ergodic, since $(\Omega,\mu,T)$ was weak mixing.

Write $I'=(1-\epsilon/\alpha,1)$ and $F'(t,\omega)=\mathbbm{1}_{I'}(t)F(\omega)$. Then
\begin{equation}\label{ergeq}
\frac{1}{N}\sum_{k=1}^N\mathbbm{1}_I(R_\alpha(t))T^{l_k(t)}(F)(\omega)=\frac{1}{N}\sum_{n=1}^{\lfloor t+\alpha N-a\rfloor} T'^{n}(F')(c(t),\omega)
\end{equation}

We want to show that
\begin{equation}\label{ergeq2}
\frac{1}{\lfloor t+\alpha N-a\rfloor}\sum_{n=1}^{\lfloor t+\alpha N-a\rfloor}T'^{n}(F') (c(t),\omega)\to\mathbb{E}_{\lambda\times\mu}[F']=\frac{\epsilon}{\alpha}E[F]
\end{equation}
since (\ref{ergeq}) would then imply
$$\frac{1}{N}\sum_{k=1}^N\mathbbm{1}_I(R_\alpha(t)) T^{l_k(t)}(F)(\omega)\to\frac{\epsilon}{\alpha}E[F]\lim_N\frac{\lfloor t+\alpha N-a\rfloor}{N}=\epsilon E[F]$$

Now, $\mathbbm{1}_{I}$ is Riemann integrable, that is, it can be approximated by continuous functions $g_m\leq \mathbbm{1}_{I}\leq h_m$ such that $\int h_m(t)-g_m(t)d\lambda(t)<2^{-m}$, and notice that since $g_m$ is continuous in a compact space and $F$ is bounded, $G_m(t,\omega)= g_m(t)F(\omega)$ satisfies the uniform equicontinuity hypothesis of Lemma \ref{lemaaux}, and the same is true of $H_m= h_m(t)F(\omega)$. Then there are full $\mu$-measure sets $\Omega_m$ such that for all $t\in[0,1)$ and $\omega\in\Omega_m$ (\ref{ergeq2}) obtains with $G_m$ or $H_m$ substituting for $F'$. Fix $M$ such that $|F|<M$, assume first that $F$ is nonnegative and write $N_\alpha=\lfloor t+\alpha N-a\rfloor$. Then

$$\frac{1}{N_\alpha}\sum_{n=1}^{N_\alpha}T'^{n}(G_m) (c(t),\omega)\leq\frac{1}{N_\alpha}\sum_{n=1}^{N_\alpha}T'^{n}(F') (c(t),\omega)\leq\frac{1}{N_\alpha}\sum_{n=1}^{N_\alpha}T'^{n}(H_m) (c(t),\omega)$$

and taking limits for $\omega\in\Omega'=\bigcap_m\Omega_m$, we get
\begin{align}
&\mathbb{E}_{\lambda\times\mu}[G_m]\leq\liminf_N\frac{1}{N_\alpha}\sum_{n=1}^{N_\alpha}T'^{n}(F') (c(t),\omega)\leq\nonumber\\
&\leq\limsup_N\frac{1}{N_\alpha}\sum_{n=1}^{N_\alpha}T'^{n}(F') (c(t),\omega)\leq\mathbb{E}_{\lambda\times\mu}[H_m]
\end{align}

But $\mathbb{E}_{\lambda\times\mu}[H_m]-\mathbb{E}_{\lambda\times\mu}[G_m]\leq M2^{-m}$, so taking $m$ to infinity we get

$$\liminf_N\frac{1}{N_\alpha}\sum_{n=1}^{N_\alpha}T'^{n}(F') (c(t),\omega)=\limsup_N\frac{1}{N_\alpha}\sum_{n=1}^{N_\alpha}T'^{n}(F') (c(t),\omega)$$

and the limit is $\lim_m \mathbb{E}_{\lambda\times\mu}[H_m]=\mathbb{E}_{\lambda\times\mu}[F']$. This proves (\ref{ergeq2}) for nonnegative $F$. When $F$ has a negative part we write $F=F^+-F^-$ for nonnegative $F^+$ and $F^-$, and we have $F'=F'^+-F'^-$, where $F'^+=\mathbbm{1}_{I'}F^+$ and $F'^-=\mathbbm{1}_{I'}F^-$ are nonnegative, so that convergence for $F'^+$ and $F'^-$ entails that of $F'$.



\end{proof}
We now obtain a corollary analogous to Maker's generalized ergodic theorem.
\begin{corollary}\label{coro2}Let $(\Omega,\mu,T)$ be weak mixing, let $\alpha\in[0,1)$ be irrational. Let $I\in\mathcal{I}$ and $(F_k)_{k\in\NN}$ be some sequence of uniformly bounded measurable functions on $\Omega$ such that $F_k\to F$ almost surely.

Then there is a full measure $\Omega'\subseteq\Omega$ such that, for all $t\in[0,1)$,
$$\frac{1}{N}\sum_{k=1}^N\mathbbm{1}_I(R_\alpha(t))T^{l_k(t)}(F_k) (\omega)\to|I|\ \mathbb{E}\left[F\right]$$
\end{corollary}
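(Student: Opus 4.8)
The plan is to treat this as an analogue of Maker's generalized ergodic theorem, reducing the convergence of the weighted averages involving the varying functions $F_k$ to the convergence already established in Theorem \ref{teo1} for a single fixed function. The starting point is the decomposition
$$\frac{1}{N}\sum_{k=1}^N\mathbbm{1}_I(R_\alpha^k(t))T^{l_k(t)}(F_k)(\omega)=\frac{1}{N}\sum_{k=1}^N\mathbbm{1}_I(R_\alpha^k(t))T^{l_k(t)}(F)(\omega)+\frac{1}{N}\sum_{k=1}^N\mathbbm{1}_I(R_\alpha^k(t))T^{l_k(t)}(F_k-F)(\omega).$$
By Theorem \ref{teo1} applied to the bounded measurable function $F$, the first term converges to $|I|\,\mathbb{E}[F]$ on a full-measure set of $\omega$, for all $t$ simultaneously; so everything reduces to showing that the error term (the second sum) tends to $0$.

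For the error term I would introduce the tail suprema $G_n=\sup_{k\geq n}|F_k-F|$. Since the $F_k$ and $F$ are uniformly bounded (say by $M$), each $G_n$ is a bounded measurable function with $G_n\leq 2M$, and because $F_k\to F$ almost surely, the sequence $G_n$ decreases to $0$ almost surely. Dominated convergence then gives $\mathbb{E}[G_n]\to 0$. Bounding the indicator weights by $1$ and using $|F_k-F|\leq G_n$ for $k\geq n$, for each fixed $n$ I can estimate
$$\left|\frac{1}{N}\sum_{k=1}^N\mathbbm{1}_I(R_\alpha^k(t))T^{l_k(t)}(F_k-F)(\omega)\right|\leq\frac{1}{N}\sum_{k=1}^{n-1}T^{l_k(t)}(|F_k-F|)(\omega)+\frac{1}{N}\sum_{k=n}^N\mathbbm{1}_I(R_\alpha^k(t))T^{l_k(t)}(G_n)(\omega).$$

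Now I would apply Theorem \ref{teo1} a second time, to each $G_n$. The finitely many initial terms (those with $k<n$) are uniformly bounded and hence contribute a quantity of order $n/N$ that vanishes as $N\to\infty$; the tail sum converges, by Theorem \ref{teo1} for $G_n$, to $|I|\,\mathbb{E}[G_n]$. Since only countably many functions are involved --- namely $F$ and the $G_n$ for $n\in\NN$ --- I take $\Omega'$ to be the intersection of the corresponding full-measure sets, which is still full measure and works simultaneously for all $t\in[0,1)$. On $\Omega'$ this yields, for every $n$,
$$\limsup_{N\to\infty}\left|\frac{1}{N}\sum_{k=1}^N\mathbbm{1}_I(R_\alpha^k(t))T^{l_k(t)}(F_k-F)(\omega)\right|\leq|I|\,\mathbb{E}[G_n].$$
Letting $n\to\infty$ and using $\mathbb{E}[G_n]\to 0$ forces the error term to $0$, and combining with the first term completes the proof.

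The argument is essentially routine once Theorem \ref{teo1} is in hand, and I expect the only points requiring care to be the interchange of the two limits (first $N\to\infty$ with $n$ fixed, then $n\to\infty$) and the bookkeeping needed to keep the exceptional null set independent of both $t$ and $n$. The latter is handled for free by the fact that Theorem \ref{teo1} already produces a single null set valid for all $t$, so that only a countable intersection over $n$ is required; the former is the standard \emph{limsup} argument for dominated averages and presents no real obstacle.
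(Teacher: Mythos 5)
Your proposal is correct and follows essentially the same route as the paper: the same splitting into the fixed-function average (handled by Theorem \ref{teo1}) plus an error term, the same tail suprema $G_n=\sup_{k\geq n}|F_k-F|$ dominated-convergence argument, and the same countable intersection of full-measure sets to keep the exceptional set independent of $t$ and $n$. The only cosmetic difference is that you retain the indicator $\mathbbm{1}_I(R_\alpha^k(t))$ when averaging $G_n$ (getting the bound $|I|\,\mathbb{E}[G_n]$), whereas the paper bounds the indicator by $1$ and averages $G_M$ directly; both bounds tend to $0$, so the proofs coincide in substance.
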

\begin{proof}Write $G_M(\omega)=\sup_{k\geq M}|F(\omega)-F_k(\omega)|$. We have
\begin{align}&\limsup_N\left|\frac{1}{N}\sum_{k=1}^N\mathbbm{1}_I(R_\alpha(t))T^{l_k(t)}(F_k) (\omega)-|I|\ \mathbb{E}\left[F\right]\right|\leq\nonumber\\
&\leq\limsup_N\frac{1}{N}\sum_{k=1}^NT^{l_k(t)}(|F_k-F|)(\omega)+\nonumber\\
&\qquad +\limsup_N\left|\frac{1}{N}\sum_{k=1}^N\mathbbm{1}_I(R_\alpha(t))T^{l_k(t)}(F)(\omega)-|I|\ \mathbb{E}\left[F\right]\right|
\end{align}

By Theorem \ref{teo1} the second summand is 0 on some full measure $\tilde{\Omega}$, and the first summand satisfies, for all $M$,
$$\limsup_N\frac{1}{N}\sum_{k=1}^NT^{l_k(t)}(|F_k-F|)(\omega)\leq\limsup_N\frac{1}{N}\sum_{k=1}^NT^{l_k(t)}|G_M|(\omega)$$
and since $G_M$ is bounded the average on the right converges to $\mathbb{E}[G_M]$ for all $\omega$ in a full measure set $\tilde{\Omega}_M$.

Taking $\Omega'=\tilde{\Omega}\cap\bigcap_M\tilde{\Omega}_M$ we get a full measure set such that for every $M$, every $\omega$ in $\Omega'$ and every $t\in[0,1)$
$$\limsup_N\left|\frac{1}{N}\sum_{k=1}^NT^{l_k(t)}(|F_k-F|)(\omega)\right|\leq\mathbb{E}[G_M]$$
Since $\mathbb{E}[G_M]\to 0$, this completes the proof.
\end{proof}
The following adapts the main result in \cite{Derri08} to our setting
\begin{theorem}\label{teo2}
Let $(\Omega,\mu,T)$ be weak mixing, let $\alpha\in[0,1)$ be irrational. Let $I\in\mathcal{I}$ and $F$, $G$ be functions in $L^\infty(\Omega)$.

Then, for each $t\in[0,1)$ there is a full measure $\Omega'\subseteq\Omega$ such that for all $\omega\in\Omega'$
\begin{align}\label{teo2eq}&\frac{1}{N}\sum_{k=1}^N\mathbbm{1}_I(R_\alpha^k(t)) T^{l_k(t)}(F)(\omega)T^k(G)(\omega)-\nonumber\\
&-\frac{1}{N}\sum_{k=1}^N \mathbbm{1}_I(R_\alpha^k(t))T^{l_k(t)}(\mathbb{E}[F\mid \mathcal{T}])(\omega)T^k(\mathbb{E}[G\mid \mathcal{T}])(\omega)\to 0
\end{align}
where $\mathcal{T}=\bigcap_k\mathcal{T}_k$ and $\mathcal{T}_k$ is the $\sigma$-algebra generated by the measurable functions $T^i(F),T^i(G)$ for $i\geq k$.
\end{theorem}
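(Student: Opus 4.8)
The plan is to split each of $F$ and $G$ into its $\mathcal{T}$-conditional expectation and a complementary tail-centered part and to show that every resulting cross term averages to zero. Set $F'=\mathbb{E}[F\mid\mathcal{T}]$, $f=F-F'$, $G'=\mathbb{E}[G\mid\mathcal{T}]$ and $g=G-G'$, so that $\mathbb{E}[f\mid\mathcal{T}]=\mathbb{E}[g\mid\mathcal{T}]=0$. Expanding $T^{l_k(t)}(F)\,T^k(G)=T^{l_k(t)}(F'+f)\,T^k(G'+g)$ and using bilinearity, the difference in (\ref{teo2eq}) is the average of the three cross terms built from $T^{l_k(t)}(F')T^k(g)$, $T^{l_k(t)}(f)T^k(G')$ and $T^{l_k(t)}(f)T^k(g)$. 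Hence it suffices to fix $t$ and prove that
$$A_N(\omega)=\frac{1}{N}\sum_{k=1}^N\mathbbm{1}_I(R_\alpha^k(t))\,T^{l_k(t)}(u)(\omega)\,T^k(v)(\omega)\to 0\quad\text{for }\mu\text{-a.e. }\omega$$
whenever at least one of $u,v$ satisfies $\mathbb{E}[\,\cdot\mid\mathcal{T}]=0$.

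The structural input is that $\mathcal{T}$ is $T$-invariant: since $T^{-1}\mathcal{T}_k=\mathcal{T}_{k+1}$, intersecting gives $T^{-1}\mathcal{T}=\mathcal{T}$, so by (\ref{shiftedcond}) conditioning on $\mathcal{T}$ commutes with the shift, $T^j\mathbb{E}[\,\cdot\mid\mathcal{T}]=\mathbb{E}[T^j(\cdot)\mid\mathcal{T}]$, and tail-centering passes to every shift. This disposes of the two half-reduced cross terms. For $T^{l_k}(f)T^k(G')$ the factor $T^k(G')=\mathbb{E}[T^kG\mid\mathcal{T}]$ is $\mathcal{T}$-measurable, so $\mathbb{E}[\mathbbm{1}_I(R_\alpha^k(t))T^{l_k}(f)T^k(G')\mid\mathcal{T}]=\mathbbm{1}_I(R_\alpha^k(t))\,T^k(G')\,\mathbb{E}[T^{l_k}(f)\mid\mathcal{T}]=0$, because $\mathbb{E}[T^{l_k}(f)\mid\mathcal{T}]=T^{l_k}\mathbb{E}[f\mid\mathcal{T}]=0$; symmetrically for $T^{l_k}(F')T^k(g)$. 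Thus each summand is $\mathcal{T}$-centered and in particular has mean zero, and the convergence $A_N\to0$ for these terms then follows from a van der Corput estimate in which the correlations, averaged over the shift parameter, are forced to zero by the weak mixing of $(\Omega,\mu,T)$ together with the a.e.\ conclusions of Theorem \ref{teo1} and Corollary \ref{coro2} applied to the one-rate averages that appear.

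The genuinely bilinear term $T^{l_k}(f)T^k(g)$, in which neither factor is $\mathcal{T}$-measurable, is the heart of the matter and is where the argument of \cite{Derri08} is adapted. Here I would apply the pointwise van der Corput machinery, bounding $\limsup_N|A_N|$ by an average over $h$ of the Cesàro averages of $\mathbbm{1}_I(R_\alpha^{k+h}(t))\mathbbm{1}_I(R_\alpha^k(t))\,T^{l_{k+h}}(f)T^{l_k}(f)\,T^{k+h}(g)T^k(g)$. The crucial point is that $\alpha\notin\QQ$ makes the two scales genuinely incommensurate and the gap $|k-l_k(t)|\sim|1-\alpha|\,k$ tend to infinity, which separates the slow $l_k$-shift acting on $f$ from the fast $k$-shift acting on $g$; this separation, combined with invariance of $\mu$, lets one identify the limiting correlations as quantities measurable with respect to $\mathcal{T}$, so that the tail-centering $\mathbb{E}[f\mid\mathcal{T}]=\mathbb{E}[g\mid\mathcal{T}]=0$ makes them vanish. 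Finally, fixing $t$ and intersecting the countably many full-measure sets produced by the auxiliary one-rate averages yields the single full-measure $\Omega'$ in the statement.

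The main obstacle is precisely this last, doubly-indexed term: because the two shifts advance at the incommensurate rates $l_k(t)\sim\alpha k$ and $k$, there is no reduction to a single ergodic average, and since the conclusion is pointwise rather than in $L^2$ one cannot simply invoke mean convergence and weak mixing. Making the van der Corput estimate work \emph{almost surely}, and verifying that all the one-rate correlation averages it spawns fall within the reach of Theorem \ref{teo1} and Corollary \ref{coro2} so that their tail-conditional limits can be read off, is the delicate step; it is exactly this pointwise double-average statement that \cite{Derri08} supplies and that we transcribe to the weighted, rotation-twisted setting governed by $l_k(t)$.
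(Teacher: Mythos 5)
Your opening decomposition is fine --- your three-term expansion is an equivalent variant of the paper's two-term telescoping $FG-F'G'=(F-F')G+F'(G-G')$ --- and your observation that each cross term is $\mathcal{T}$-centered, hence has zero mean, is correct (the paper makes the same use of the $T$-invariance of $\mathcal{T}$). But from that point on there is a genuine gap: zero mean per summand gives nothing pointwise, and your proposed route to almost-sure convergence, van der Corput plus weak mixing, does not close it. Centering at the full tail $\mathcal{T}$ produces no orthogonality against the other factors: $T^k(G)$, $T^{l_k(t)}(f)$, $T^k(g)$ are not $\mathcal{T}$-measurable, so the correlations $\mathbb{E}[X_kX_{k'}]$ need not vanish; weak mixing only gives decay of pair correlations along density-one sets of shifts, says nothing pointwise, and says nothing about the four-fold products $T^{l_{k+h}(t)}(f)T^{l_k(t)}(f)T^{k+h}(g)T^k(g)$ that the van der Corput step spawns. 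Worse, those correlation terms, regrouped as $T^{l_k(t)}\bigl(T^{l_{k+h}(t)-l_k(t)}(f)\,f\bigr)\cdot T^k\bigl(T^h(g)\,g\bigr)$, are averages of exactly the same bilinear two-rate type you are trying to control, so the reduction is circular; and your assertion that ``separation of scales lets one identify the limiting correlations as tail-measurable quantities'' is precisely the statement that needs proof, not an available input.

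The idea you are missing is the paper's reduction from $\mathcal{T}$ to the algebras $\mathcal{T}_n$: by Doob's theorem $\mathbb{E}[F\mid\mathcal{T}_n]\to\mathbb{E}[F\mid\mathcal{T}]$ a.s.\ and in $L^1$, and Theorem \ref{teo1} (resp.\ Birkhoff, for the term shifted at rate $k$) absorbs the approximation error into the averages, so it suffices to treat summands whose centered factor is centered at some fixed $\mathcal{T}_n$. This buys \emph{exact}, finite-range orthogonality with no mixing hypothesis at all: $T^{l_k(t)}(F-\mathbb{E}[F\mid\mathcal{T}_n])$ is orthogonal to every $\mathcal{T}_{n+l_k(t)}$-measurable function; $T^k(G)$ is $\mathcal{T}_{n+l_k(t)}$-measurable once $k>k_0=\lfloor n/(1-\alpha)\rfloor+1$, and $X_{k'}$ is $\mathcal{T}_{n+l_k(t)}$-measurable once $k'>k+\lfloor n/\alpha\rfloor+1$; hence $\mathbb{E}[X_k]=0$ and $\mathbb{E}[X_kX_{k'}]=0$ outside a band of bounded width in $|k-k'|$. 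This yields $\bigl\|\frac{1}{N}\sum_{k=1}^NX_k\bigr\|_2^2=O(1/N)$, and Lyons' strong law of large numbers (Theorem \ref{teolyons}) --- not van der Corput --- then gives almost-sure convergence to $0$ for each fixed $t$. The same scheme disposes of your doubly-centered term, since the argument needs only one factor centered at $\mathcal{T}_n$; the term you single out as ``the heart of the matter'' requires no separate treatment, and in the paper's two-term telescoping it never even appears.
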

\begin{proof}Equation (\ref{teo2eq}) can be expanded telescopically using multilinearity, and is then equivalent to
\begin{align}\limsup_N&\left|\frac{1}{N}\sum_{k=1}^N\mathbbm{1}_I(R_\alpha^k(t)) T^{l_k(t)}(F-\mathbb{E}[F\mid \mathcal{T}])(\omega)T^k(G)(\omega)+\right.\nonumber\\
&\left.+\frac{1}{N}\sum_{k=1}^N \mathbbm{1}_I(R_\alpha^k(t))T^{l_k(t)}(\mathbb{E}[F\mid \mathcal{T}])(\omega)T^k(G-\mathbb{E}[G\mid \mathcal{T}])(\omega)\right|= 0
\end{align}

So it suffices to show
\begin{equation}\label{teo2eq2}\limsup_N\left|\frac{1}{N}\sum_{k=1}^N\mathbbm{1}_I(R_\alpha^k(t)) T^{l_k(t)}(F-\mathbb{E}[F\mid \mathcal{T}])(\omega)T^k(G)(\omega)\right|=0\end{equation}
and
\begin{equation}\label{teo2eq3}\limsup_N\left|\frac{1}{N}\sum_{k=1}^N \mathbbm{1}_I(R_\alpha^k(t))T^{l_k(t)}(\mathbb{E}[F\mid \mathcal{T}])(\omega)T^k(G-\mathbb{E}[G\mid \mathcal{T}])(\omega)\right|= 0
\end{equation}

Write $M$ for some common bound of $F$ and $G$. Notice $\mathcal{T}=\bigcap_k\mathcal{T}_k$ is the limit of a monotone sequence of $\sigma$-algebras, and is also $T$-invariant. By Doob's theorem, $\mathbb{E}[F\mid\mathcal{T}_k]\to\mathbb{E}[F\mid\mathcal{T}]$ both pointwise and in $L^1$, and the same is true for $G$.

By Theorem \ref{teo1}, there is a full measure $\Omega'\subset\Omega$ such that for all $\omega\in\Omega'$ and all $t\in[0,1)$
\begin{align}\limsup_N&\left|\frac{1}{N}\sum_{k=1}^N \mathbbm{1}_I(R_\alpha^k(t))T^{l_k(t)}(F-\mathbb{E}[F\mid \mathcal{T}])(\omega)T^k(G)(\omega)\right|\leq\nonumber\\
&\limsup_N\left|\frac{1}{N}\sum_{k=1}^N \mathbbm{1}_I(R_\alpha^k(t))T^{l_k(t)}(F-\mathbb{E}[F\mid \mathcal{T}_n])(\omega)T^k(G)(\omega)\right|+\nonumber\\
&+\limsup_N\frac{1}{N}\sum_{k=1}^N T^{l_k(t)}(|\mathbb{E}[F\mid \mathcal{T}]-\mathbb{E}[F\mid \mathcal{T}_n]|)(\omega)T^k(|G|)(\omega)\leq\nonumber\\
&\leq \limsup_N\left|\frac{1}{N}\sum_{k=1}^N \mathbbm{1}_I(R_\alpha^k(t))T^{l_k(t)}(F-\mathbb{E}[F\mid \mathcal{T}_n])(\omega)T^k(G)(\omega)\right|+\nonumber\\
&+M\left\|E[F\mid\mathcal{T}]-E[F\mid\mathcal{T}_n]\right\|_{L^1}\nonumber
\end{align}
and we can always choose a big enough $n$ independently of $\omega$ so that the last term is arbitrarily small. Therefore, to prove (\ref{teo2eq2}) it will suffice to prove\begin{equation}\label{teo2eq4}\limsup_N\left|\frac{1}{N}\sum_{k=1}^N \mathbbm{1}_I(R_\alpha^k(t))T^{l_k(t)}(F-\mathbb{E}[F\mid \mathcal{T}_n])(\omega)T^k(G)(\omega)\right|=0
\end{equation}
almost surely and for all $n$.

Analogously, using Birkhoff's ergodic theorem instead of Theorem \ref{teo1}, we can show that to prove (\ref{teo2eq3}) it suffices to prove
\begin{equation}\label{teo2eq5}\limsup_N\left|\frac{1}{N}\sum_{k=1}^N \mathbbm{1}_I(R_\alpha^k(t))T^{l_k(t)}(\mathbb{E}[F\mid \mathcal{T}])(\omega)T^k(G-\mathbb{E}[G\mid \mathcal{T}_n])(\omega)\right|= 0
\end{equation}
almost surely and for all $n$.

To prove both limits we will use Lyon's Law of Large Numbers for the following random variables (which depend on $t$, and hence the full measure set will no longer be independent of $t$)
$$X_k=\mathbbm{1}_I(R_\alpha^k(t))T^{l_k(t)}(F-\mathbb{E}[F\mid \mathcal{T}_n])T^k(G)$$
and
$$Y_k=\mathbbm{1}_I(R_\alpha^k(t))T^{l_k(t)}(\mathbb{E}[F\mid \mathcal{T}])T^k(G-\mathbb{E}[G\mid \mathcal{T}_n])$$

\begin{theorem}[Lyon's Law of Large Numbers, \cite{Lyons88}]\label{teolyons}Let $X_k$ be bounded, zero mean random variables such that
$$\sum_{N=0}^\infty\frac{1}{N}\left\|\frac{1}{N}\sum_{k=1}^NX_k\right\|_2^2<\infty$$
Then $(1/N)\sum_{k=1}^NX_k\to 0$ as $N\to\infty$ almost surely.
\end{theorem}

To see that $X_k$ and $Y_k$ satisfy the hypotheses of this LLN, notice first that, by invariance of $\mu$, for any measurable $f$ and $\sigma$-algebra $\mathcal{F}$ we have $T^k(\mathbb{E}[f\mid\mathcal{F}])=\mathbb{E}[T^k(f)\mid\ T^{-k}(\mathcal{F})]$, and hence, for any $f$ (recall (\ref{shiftedcond})),
\begin{equation}\label{einvariante}
T^k\left(\mathbb{E}[f\mid\mathcal{T}_n]\right)=\mathbb{E}[T^k(f)\mid\mathcal{T}_{n+k}]
\end{equation}

Let us show $X_k$ has zero mean for large enough $k$. Define $k_0=\lfloor n/(1-\alpha)\rfloor+1$ and by (\ref{einvariante})
\begin{align}
\mathbb{E}\left[X_k\right]&=\mathbbm{1}_I(R_\alpha^k(t))\mathbb{E}\left[\mathbb{E}[(T^{l_k(t)}(F)-\mathbb{E}[T^{l_k(t)}(F)\mid \mathcal{T}_{n+l_k(t)}])T^k(G)\mid \mathcal{T}_{n+l_k(t)}]\right]\nonumber\\
&=\mathbbm{1}_I(R_\alpha^k(t))\mathbb{E}\left[T^k(G)(\mathbb{E}[T^{l_k(t)}(F)\mid \mathcal{T}_{n+l_k(t)}]-\mathbb{E}\left[T^{l_k(t)}(F)\mid \mathcal{T}_{n+l_k(t)}\right])\right]\nonumber\\
&=0\nonumber
\end{align}
since $k>k_0$ implies $k>n+l_k(t)$ (and then $\mathcal{T}_k\subseteq\mathcal{T}_{n+l_k(t)}$, which implies $T^k(G)$ is $\mathcal{T}_{n+l_k(t)}$-measurable). Similarly,
\begin{align}
\mathbb{E}\left[Y_k\right]&=\mathbbm{1}_I(R_\alpha^k(t))\mathbb{E}\left[\mathbb{E}[\mathbb{E}[T^{l_k(t)}(F)\mid \mathcal{T}](T^k(G)-\mathbb{E}[T^k(G)\mid\mathcal{T}_{n+k}])\mid\ \mathcal{T}_{n+k}]\right]\nonumber\\
&=\mathbbm{1}_I(R_\alpha^k(t))\mathbb{E}\left[\mathbb{E}[T^{l_k(t)}(F)\mid \mathcal{T}](\mathbb{E}[T^k(G)\mid\ \mathcal{T}_{n+k}]-\mathbb{E}[T^k(G)\mid\mathcal{T}_{n+k}])\right]\nonumber\\
&=0\nonumber
\end{align}

Finally, let us verify the summability condition in Theorem \ref{teolyons}.

Consider $k,k'$ such that $k>k_0$ and $k'>k+\lfloor n/\alpha\rfloor+1$. We have that $l_{k'}(t)>l_k(t)+n$, so that $T^{l_{k'}(t)}(\mathbb{E}[F\mid\mathcal{T}_n])=\mathbb{E}[T^{l_{k'}(t)}(F)\mid\mathcal{T}_{n+l_{k'}(t)}]$ is $\mathcal{T}_{l_k(t)+n}$-measurable and hence $X_{k'}$ is $\mathcal{T}_{l_k(t)+n}$-measurable. Then,

$$\mathbb{E}\left[X_kX_{k'}\right]=\mathbbm{1}_I(R_\alpha^k(t))\mathbbm{1}_I(R_\alpha^{k'}(t))\mathbb{E}\left[X_{k'}\mathbb{E}[X_k\mid\mathcal{T}_{l_k(t)+n}]\right]$$
and given $k$ such that $R_\alpha^k(t)\in I$ (for otherwise $X_k$ is just 0)
\begin{align}
\mathbb{E}[X_k\mid\mathcal{T}_{l_k(t)+n}]&=T^k(G)\mathbb{E}[(T^{l_k(t)}(F)-\mathbb{E}[T^{l_k(t)}(F)\mid \mathcal{T}_{l_k(t)+n}])\mid \mathcal{T}_{l_{k}(t)+n}]=0\nonumber
\end{align}

Similarly, for any $k'>k+n$, $Y_{k'}$ is $\mathcal{T}_{k+n}$-measurable. Then
\begin{align}
\mathbb{E}\left[Y_kY_{k'}\right]=&\mathbbm{1}_I(R_\alpha^k(t))\mathbbm{1}_I(R_\alpha^{k'}(t))\mathbb{E}\left[Y_{k'}\mathbb{E}[Y_k\mid \mathcal{T}_{n+k}]\right]\nonumber
\end{align}
and given $k$ such that $R_\alpha^k(t)\in I$ (for otherwise $Y_k$ is just 0)
\begin{align}\mathbb{E}[Y_k\mid \mathcal{T}_{n+k}]&=\mathbb{E}[T^{l_k(t)}(F)\mid \mathcal{T}]\mathbb{E}\left[(T^k(G)-\mathbb{E}[T^k(G)\mid\mathcal{T}_{n+k}])\mid\mathcal{T}_{n+k}\right]=0\nonumber
\end{align}

That is, for $k>k_0$ and $k'>k+\lfloor n/\alpha\rfloor+1$ (resp. $k'>k+n$) the correlation of $X_k$ and $X_{k'}$ (resp. $Y_k$ and $Y_{k'}$) vanishes.

Since $X_k,Y_k\leq 2M^2$, we get
\begin{align}
&\left\|\frac{1}{N}\sum_{k=1}^NX_k\right\|_2^2\leq\frac{2}{N^2}\left[M^2N+\sum_{k_0<k<k'\leq N}^N\mathbb{E}[X_kX_{k'}]+\sum_{\text{rest of }k<k'\leq N}\mathbb{E}[X_kX_{k'}]\right]\leq\nonumber\\
&\leq\frac{2}{N^2}\left[M^2N+\sum_{k_0<k}^N\#\{k':k'\leq k+\lfloor n/\alpha\rfloor+1\}+4k_0NM^2\}\right]\leq\nonumber\\
&\leq\frac{2}{N^2}\left[M^2N+4k_0NM^2+\sum_{k_0<k}^N(\lfloor n/\alpha\rfloor+1)\right]<\frac{K}{N}<\infty\nonumber
\end{align}

where $K=2(M^2(4k_0+1)+\lfloor n/\alpha\rfloor+1)$ is independent of $N$, and we conclude

$$\sum_{N=0}^\infty\frac{1}{N}\left\|\frac{1}{N}\sum_{k=1}^NX_k\right\|_2^2\leq \sum_{N=0}^\infty\frac{K}{N^2}<\infty$$

The condition for $Y_k$ is established similarly.
\end{proof}

\begin{corollary}\label{coroteo2}Let $(\Omega,\mu,T)$, $I$, $F$ and $G$ be as in Theorem \ref{teo2} and suppose, in addition, that the system is Kolmogorov or exact.
Then, for each $t\in[0,1)$ there is a full measure $\Omega'\subseteq\Omega$ such that for all $\omega\in\Omega'$
\begin{align}\label{coro2eq}&\frac{1}{N}\sum_{k=1}^N\mathbbm{1}_I(R_\alpha^k(t)) T^{l_k(t)}(F)(\omega)T^k(G)(\omega)\to|I|\mathbb{E}[F]\mathbb{E}[G]
\end{align}
\end{corollary}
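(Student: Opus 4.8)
The plan is to combine Theorem \ref{teo2} with the triviality of the remote-future $\sigma$-algebra furnished by exactness (or the Kolmogorov property), and then to invoke equidistribution of the irrational rotation. By Theorem \ref{teo2}, for each fixed $t$ the average on the left of (\ref{coro2eq}) has, on a full-measure set, the same asymptotic behavior as
$$\frac{1}{N}\sum_{k=1}^N\mathbbm{1}_I(R_\alpha^k(t))T^{l_k(t)}(\mathbb{E}[F\mid\mathcal{T}])(\omega)\,T^k(\mathbb{E}[G\mid\mathcal{T}])(\omega),$$
so the whole problem reduces to identifying $\mathbb{E}[F\mid\mathcal{T}]$ and $\mathbb{E}[G\mid\mathcal{T}]$, where $\mathcal{T}=\bigcap_k\mathcal{T}_k$ is the tail of the process $(T^iF,T^iG)_i$.

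The key step is to show that $\mathcal{T}$ is trivial, so that $\mathbb{E}[F\mid\mathcal{T}]=\mathbb{E}[F]$ and $\mathbb{E}[G\mid\mathcal{T}]=\mathbb{E}[G]$ almost surely. I would first treat finitely-valued $F$ and $G$: then $\sigma(F,G)$ is a finite partition $\mathcal{A}$, and $\mathcal{T}_k=\bigvee_{i\ge k}T^{-i}\mathcal{A}$, whence $\mathcal{T}=\bigwedge_n\bigvee_{k\ge n}T^{-k}\mathcal{A}=\mathrm{Tail}(\mathcal{A})$ is precisely the object that (\ref{defexact}) forces to be trivial, for both exact endomorphisms and Kolmogorov automorphisms. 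Hence in the finitely-valued case the reduced average equals $\mathbb{E}[F]\mathbb{E}[G]\cdot\frac1N\sum_{k}\mathbbm{1}_I(R_\alpha^k(t))$, and since $\alpha$ is irrational and $I\in\mathcal{I}$ has finite (measure-zero) boundary, Weyl equidistribution --- equivalently Theorem \ref{teo1} applied with $F\equiv 1$ --- gives $\frac1N\sum_k\mathbbm{1}_I(R_\alpha^k(t))\to|I|$ for every $t$, yielding (\ref{coro2eq}).

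To pass to general $F,G\in L^\infty$, I would approximate them in $L^1(\mu)$ by uniformly bounded finitely-valued $F_m,G_m$ with $|F_m|,|G_m|\le M$. Writing $A_N(F,G)$ for the average on the left of (\ref{coro2eq}), the elementary bound
$$|T^{l_k}(F)T^k(G)-T^{l_k}(F_m)T^k(G_m)|\le M\big(T^{l_k}|F-F_m|+T^k|G-G_m|\big)$$
gives, after inserting $\mathbbm{1}_I(R_\alpha^k(t))\le 1$, applying Theorem \ref{teo1} to the first sum (which keeps the indicator and the factor $|I|$) and Birkhoff's theorem to the second,
$$\limsup_N|A_N(F,G)-A_N(F_m,G_m)|\le M|I|\,\mathbb{E}|F-F_m|+M\,\mathbb{E}|G-G_m|$$
on a full-measure set valid for the fixed $t$. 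Since $A_N(F_m,G_m)\to|I|\mathbb{E}[F_m]\mathbb{E}[G_m]\to|I|\mathbb{E}[F]\mathbb{E}[G]$, a $3\varepsilon$-argument over the common full-measure set obtained by intersecting the countably many exceptional sets indexed by $m$ completes the proof.

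The main obstacle is the tail-triviality step, and in particular matching it to the available hypotheses: the definition (\ref{defexact}) is stated only for finite partitions, so it cannot be applied directly to arbitrary $L^\infty$ functions, and for the Kolmogorov (invertible) case one cannot simply replace $\sigma(F,G)$ by the full Borel $\sigma$-algebra, since its remote future is not trivial. This is exactly why I would prove the finitely-valued case first and only afterwards remove the finiteness by approximation; the delicate point is to arrange that all the limiting statements hold simultaneously on a single full-measure set (for each fixed $t$), which the uniform control in Theorem \ref{teo1} makes possible.
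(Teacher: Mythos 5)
Your proposal is correct and takes essentially the same route as the paper's proof: reduce via Theorem \ref{teo2}, observe that for finitely-valued $F,G$ the algebra $\mathcal{T}$ is the tail of a finite partition and hence trivial under exactness or the K property, handle the remaining indicator average by equidistribution, and then pass to general $F,G\in L^\infty$ by approximation controlled through Theorem \ref{teo1} and Birkhoff's theorem. The only cosmetic differences are that the paper approximates simple functions in the $L^\infty$ norm and removes the simplicity assumption on $F$ and $G$ in two separate stages, whereas you approximate in $L^1$ with uniform bounds and treat both functions at once; both variants are valid.
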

\begin{proof}Notice first that, by Theorem \ref{teo2}, (\ref{coro2eq}) is true when $F$ and $G$ are simple functions, since then $\mathcal{T}$ would be the tail event algebra of some finite partition, and such algebras are trivial when the system is K or exact.

Now, simple functions are dense in $L^\infty$, so for each $n$ we have a simple $F_n$ such that $\|F-F_n\|_\infty<2^{-n}$. And then, if $G$ is simple, using Theorem \ref{teo1} and the boundedness of $G$ we get, for every $\omega$ on a full measure $\Omega_n$

\begin{align}
&\limsup_N\left|\frac{1}{N}\sum_{k=1}^N\mathbbm{1}_I(R_\alpha^k(t))T^{l_k(t)}(F)(\omega)T^k(G)(\omega)-|I|\mathbb{E}[F]\mathbb{E}[G]\right|\leq\nonumber\\
&\leq\limsup_N\left|\frac{1}{N}\sum_{k=1}^N\mathbbm{1}_I(R_\alpha^k(t)) T^{l_k(t)}(F_n)(\omega)T^k(G)(\omega)-|I|\mathbb{E}[F]\mathbb{E}[G]\right|\leq\nonumber\\
&|I|\mathbb{E}[|F-F_n|]\mathbb{E}[G]<|I|2^{-n}\mathbb{E}[G]\nonumber
\end{align}
Which implies (\ref{coro2eq}) for every $F$ in $L^\infty$, every simple $G$ and every $\omega$ in the full measure set $\Omega'=\bigcap_n\Omega_n$.

We then approximate any bounded $G$ by simple functions $G_n$ and prove convergence in exactly the same way.
\end{proof}

Before ending this section, we recall another result we will use, a rather trivial extension of Doob's convergence theorem (a proof can be found in \cite[Theorem 2]{Blackwell61})

\begin{theorem}\label{teomart}Let $(f_k)_{k\in\NN}$ be a sequence of functions bounded in absolute value by some $g\in L^1$ such that $f_k\to f$ a.s. and let $\mathcal{F}_k$ be a monotone sequence of $\sigma$-algebras such that $\mathcal{F}_k\nearrow \mathcal{F}$ or $\mathcal{F}_k\searrow \mathcal{F}$. Then
$$\mathbb{E}\left[f_k\mid\ \mathcal{F}_k\right]\to \mathbb{E}\left[f\mid\ \mathcal{F}\right]\quad\text{a.s.}$$
\end{theorem}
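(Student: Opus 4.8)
The plan is to reduce this simultaneous limit to two separately classical facts—Doob's martingale convergence theorem for a fixed integrand, and the conditional dominated convergence theorem for a fixed $\sigma$-algebra—by inserting an intermediate term. Writing
\begin{equation}\label{martsplit}
\mathbb{E}[f_k\mid\mathcal{F}_k]-\mathbb{E}[f\mid\mathcal{F}]=\mathbb{E}[f_k-f\mid\mathcal{F}_k]+\bigl(\mathbb{E}[f\mid\mathcal{F}_k]-\mathbb{E}[f\mid\mathcal{F}]\bigr),
\end{equation}
I would handle the two summands separately. The second summand involves a fixed integrand $f\in L^1$ (note $|f|\le g$ a.s.\ since $|f_k|\le g$ and $f_k\to f$) and a monotone family of $\sigma$-algebras, so it tends to $0$ a.s.\ directly by Doob's (L\'evy's) upward theorem when $\mathcal{F}_k\nearrow\mathcal{F}$ and by the downward theorem when $\mathcal{F}_k\searrow\mathcal{F}$.

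The first summand is the genuinely new part, since \emph{both} the integrand and the conditioning $\sigma$-algebra vary with $k$. To decouple them I would introduce the tail suprema
$$g_M=\sup_{k\ge M}|f_k-f|,$$
which are measurable, satisfy $g_M\le 2g\in L^1$, and decrease to $0$ a.s.\ because $f_k\to f$ a.s. For every $k\ge M$ the conditional Jensen inequality gives the pointwise domination
$$\bigl|\mathbb{E}[f_k-f\mid\mathcal{F}_k]\bigr|\le\mathbb{E}\bigl[|f_k-f|\,\big|\,\mathcal{F}_k\bigr]\le\mathbb{E}[g_M\mid\mathcal{F}_k]\quad\text{a.s.}$$
Now $g_M$ is a \emph{fixed} $L^1$ function, so letting $k\to\infty$ and invoking Doob's theorem once more (upward or downward, as appropriate) yields $\mathbb{E}[g_M\mid\mathcal{F}_k]\to\mathbb{E}[g_M\mid\mathcal{F}]$ a.s. Hence, for each fixed $M$,
$$\limsup_{k\to\infty}\bigl|\mathbb{E}[f_k-f\mid\mathcal{F}_k]\bigr|\le\mathbb{E}[g_M\mid\mathcal{F}]\quad\text{a.s.}$$

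Finally I would let $M\to\infty$. Since $g_M\searrow 0$ a.s.\ with $g_M\le 2g\in L^1$, the conditional dominated (or monotone) convergence theorem for the single $\sigma$-algebra $\mathcal{F}$ gives $\mathbb{E}[g_M\mid\mathcal{F}]\to 0$ a.s., so the left-hand side above is $0$ a.s., i.e.\ the first summand in \eqref{martsplit} vanishes in the limit. Combining the two summands completes the argument. The main obstacle is precisely this entanglement of the two convergences; the resolution is the two-parameter estimate above, where the order of limits matters: one first drives $k\to\infty$ with $M$ frozen (so that Doob applies to the fixed integrand $g_M$), and only afterwards sends $M\to\infty$ (so that dominated convergence applies with the fixed $\sigma$-algebra $\mathcal{F}$). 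The uniform $L^1$ bound $g_M\le 2g$ is what legitimizes interchanging these limits.
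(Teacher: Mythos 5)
Your proof is correct. Note that the paper does not actually supply its own argument for this statement: it is quoted as a known extension of Doob's theorem, with a pointer to \cite[Theorem 2]{Blackwell61} (the result is essentially what is often called Hunt's lemma). Your argument is the standard proof of that result, and every step checks out: the splitting $\mathbb{E}[f_k\mid\mathcal{F}_k]-\mathbb{E}[f\mid\mathcal{F}]=\mathbb{E}[f_k-f\mid\mathcal{F}_k]+\bigl(\mathbb{E}[f\mid\mathcal{F}_k]-\mathbb{E}[f\mid\mathcal{F}]\bigr)$ isolates the classical martingale limit (L\'evy upward/downward theorems, valid in both monotone cases since $f\in L^1$) from the genuinely two-parameter term; the domination $\bigl|\mathbb{E}[f_k-f\mid\mathcal{F}_k]\bigr|\le\mathbb{E}[g_M\mid\mathcal{F}_k]$ for $k\ge M$ with $g_M=\sup_{k\ge M}|f_k-f|\le 2g$ correctly freezes the integrand so Doob applies again; and the final passage $M\to\infty$ via conditional dominated convergence for the fixed $\sigma$-algebra $\mathcal{F}$ (only countably many null sets accumulate throughout) closes the argument. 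The order of limits is handled exactly as it must be, so your proposal constitutes a complete, self-contained proof of the statement the paper only cites.
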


\section{Gibbs measures}\label{secgibbs}In this section we introduce Gibbs measures and their properties which are most relevant for our needs.

Throughout this section $\Sigma$ will denote any finite alphabet (not necessarily $\Lambda\times\Lambda'$) and $Y\subseteq \Sigma^\NN$ will be a topologically transitive subshift of finite type (see \cite{Baladi2000} for definitions). For any $a\in\Sigma^*$ we will denote the cylinder of sequences in $X$ that have $a$ as a prefix by $[a]$ (note this set may be empty). If $\omega\in\Sigma^\NN$ and $a\in\Sigma^*$ we will denote by $a\omega$ the sequence that arises when concatening $a$ as a prefix of $\omega$.

For some fixed $0<\rho<1$ we endow $\Sigma^\NN$ with the metric $d(\omega,\tilde{\omega})=\rho^{\min\{n:\ \omega_n\neq\tilde{\omega}_n)\}}$. The topology induced by this metric is the product topology, which makes $\Sigma^\NN$ (and hence $Y$) a compact space. Let $C(Y)$ denote the space of real-valued continuous functions on $Y$. A function $\phi\in C(Y)$ is called H\"{o}lder if there is some constant $K>0$ such that $|\phi(\omega)-\phi(\tilde{\omega})|\leq Kd(\omega,\tilde{\omega})$. For any H\"{o}lder $\phi\in C(Y)$ define the \textit{transfer operator} on $C(Y)$ by
$$L_\phi(f)(\omega)=\sum_{\substack{a\in\Sigma\\a\omega\in X}}f(a\omega)e^{\phi(a\omega)}$$

As usual, $L_\phi^*$ will denote the dual operator on $C(Y)^*$, the space of regular Borel measures on $Y$, i.e., $L_\phi^*(\mu)(f)=\mu(L_\phi(f))$.

Write $S_m^\phi(\omega)=\sum_{j=0}^{m-1}\phi(T^j(\omega))$ and $E_m^\phi(\omega)= e^{S_{m}^\phi(\omega)}$. Observe that $E$ satisfies the cocycle condition $E_{|a|+m}^\phi(a\omega)=E_{|a|}^\phi(a\omega)E_m^\phi(\omega)$. The following observation follows from the definition of the transfer operator.

\begin{observation}$L^m_\phi(f)(\omega)=\sum_{\substack{a\in\Sigma^m\\a\omega\in X}}f(a\omega)E_m^\phi(\omega)$
\end{observation}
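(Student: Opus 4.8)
The plan is to prove the identity by induction on $m$, using nothing beyond the definition of the transfer operator and the cocycle relation for $E^\phi$. I read the summand as $E_m^\phi(a\omega)$ rather than $E_m^\phi(\omega)$: the weight must be the Birkhoff factor evaluated at the \emph{concatenated} sequence $a\omega$, since for $m=1$ the formula has to reduce to the definition $L_\phi(f)(\omega)=\sum_{a\omega\in X}f(a\omega)e^{\phi(a\omega)}$, and indeed $E_1^\phi(a\omega)=e^{S_1^\phi(a\omega)}=e^{\phi(a\omega)}$ because $S_1^\phi=\phi$. This observation is at once the reading that makes the statement true and the base case $m=1$ of the induction, where the claimed formula is verbatim the definition of $L_\phi$.

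For the inductive step I would write $L_\phi^{m+1}=L_\phi\circ L_\phi^m$, expand the outer operator as a sum over single symbols $b\in\Sigma$ with $b\omega\in X$ (contributing a factor $e^{\phi(b\omega)}$ and evaluating the inner term at the shifted point $b\omega$), and then substitute the inductive hypothesis for $L_\phi^m(f)(b\omega)$. This yields the double sum
$$L_\phi^{m+1}(f)(\omega)=\sum_{\substack{b\in\Sigma\\ b\omega\in X}}e^{\phi(b\omega)}\sum_{\substack{a\in\Sigma^m\\ ab\omega\in X}}f(ab\omega)\,E_m^\phi(ab\omega).$$
Reindexing by the concatenated word $w=ab\in\Sigma^{m+1}$, the two inner weights combine through the cocycle relation applied with prefix $a$, base point $b\omega$, and suffix length $1$:
$$E_{m+1}^\phi(ab\omega)=E_{m}^\phi(ab\omega)\,E_1^\phi(b\omega)=E_m^\phi(ab\omega)\,e^{\phi(b\omega)},$$
so each summand becomes $f(w\omega)\,E_{m+1}^\phi(w\omega)$, which is exactly the claim at level $m+1$.

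The argument is entirely routine and I expect no genuine obstacle beyond bookkeeping. The single point that deserves care is the orientation of the cocycle relation in the inductive step: the symbol $b$ prepended by the outer operator becomes the \emph{last} entry of the length-$(m+1)$ Birkhoff orbit based at $w\omega$, while the length-$m$ prefix $a$ supplies the first $m$ terms, so one must place $b\omega$ (not $\omega$) as the base point and the length-$m$ word $a$ as the prefix when invoking the cocycle condition. One should also check that the reindexing respects the subshift constraint, i.e. that the pair of conditions $b\omega\in X$ and $ab\omega\in X$ is equivalent to $w\omega\in X$ for $w=ab$; this is immediate, since $X$ is cut out by finitely many forbidden transitions and $ab\omega\in X$ already forces its suffix $b\omega$ to lie in $X$.
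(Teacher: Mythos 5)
Your proof is correct, and since the paper offers no argument at all for this observation (it is stated as following directly from the definition of $L_\phi$), your induction is just the careful unfolding of the $m$-fold composition $L_\phi^{m+1}=L_\phi\circ L_\phi^m$ that the paper leaves implicit; there is no real difference of method. Your emendation of the statement is also the right one: as printed, the weight $E_m^\phi(\omega)$ does not depend on $a$, so the identity would assert $L_\phi^m(f)(\omega)=E_m^\phi(\omega)\sum_{a}f(a\omega)$, which already contradicts the paper's own definition of $L_\phi$ at $m=1$ unless $\phi(a\omega)=\phi(\omega)$ for every admissible $a$. The corrected weight $E_m^\phi(a\omega)$ is moreover exactly the form the paper itself uses when it applies this observation, e.g.\ in the computation $\nu([a])=e^{-mP}\int e^{S_m^\phi(a\omega)}\,d\nu(\omega)$ at the start of the proof of Lemma \ref{lemagibs}, which confirms your reading.
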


The following important theorem is the starting point for the theory of Gibbs measures (for a proof, see \cite[Theorem 1.5]{Baladi2000}).

\begin{theorem}[Ruelle-Perron-Frobenius]\label{teoruelle}Let $Y\subseteq\Sigma^\NN$ be a topologically transitive subshift of finite type and $\phi\in C(Y)$ be a H\"{o}lder function. There exist a real number $P$, a regular Borel measure $\nu$ with full support in $Y$ and a strictly positive function $\psi\in C(Y)$ such that:
\begin{itemize}
\item $L_\phi(\psi)=e^P\psi$
\item $L_\phi^*(\nu)=e^P\nu$
\item The measure $\mu$ defined by $\mu(A)=\int_Y\psi d\nu$ is shift-invariant.
\end{itemize}
\end{theorem}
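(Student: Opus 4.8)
This is the Ruelle--Perron--Frobenius theorem, a classical result; the plan is to construct the triple $(P,\nu,\psi)$ in three stages by the standard transfer-operator argument, leaving the quantitative estimates to the cited references. First I would produce the eigenmeasure $\nu$ for the dual operator. Consider the self-map of the weak-$*$ compact convex set $\mathcal{P}(Y)$ given by $\sigma\mapsto L_\phi^*(\sigma)/\big(L_\phi^*(\sigma)(1)\big)$, where the denominator equals $\sigma(L_\phi 1)$ and is bounded away from $0$ because $L_\phi 1>0$ on the compact space $Y$. Since $L_\phi^*$ is weak-$*$ continuous, this map is a continuous self-map of $\mathcal{P}(Y)$, so the Schauder--Tychonoff fixed point theorem yields a fixed point $\nu$ satisfying $L_\phi^*(\nu)=e^P\nu$ with $e^P:=\nu(L_\phi 1)>0$. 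Full support of $\nu$ then follows from topological transitivity together with the strict positivity of the weights $E_m^\phi$: transitivity lets one reach any nonempty cylinder with a uniformly bounded Jacobian, so no such cylinder can be null.

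The harder stage is the strictly positive eigenfunction $\psi$ with $L_\phi(\psi)=e^P\psi$, and here I would invoke the Birkhoff--Hopf theory of positive operators. Working with the normalized operator $\widetilde{L}=e^{-P}L_\phi$, consider the log-H\"older cone
$$\mathcal{C}_B=\{\,f\in C(Y):\ f>0,\ |\log f(\omega)-\log f(\tilde\omega)|\le B\,d(\omega,\tilde\omega)\ \forall\,\omega,\tilde\omega\in Y\,\}.$$
Using that $\phi$ is H\"older and that the weights obey the cocycle identity $E_{|a|+m}^\phi(a\omega)=E_{|a|}^\phi(a\omega)E_m^\phi(\omega)$, one checks that for $B$ large $\widetilde{L}$ maps $\mathcal{C}_B$ into a strictly smaller sub-cone, hence contracts the associated Hilbert projective metric. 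Birkhoff's theorem then gives a unique fixed direction, i.e.\ a strictly positive $\psi$ with $L_\phi(\psi)=\kappa\psi$ for some $\kappa>0$. Pairing against $\nu$ pins down the eigenvalue: $\kappa\,\nu(\psi)=\nu(L_\phi\psi)=(L_\phi^*\nu)(\psi)=e^P\nu(\psi)$, and $\nu(\psi)>0$ (a positive function integrated against a full-support measure) forces $\kappa=e^P$. As an alternative to the cone contraction, the Ces\`aro averages $N^{-1}\sum_{n<N}\widetilde{L}^{\,n}1$ are uniformly bounded and equicontinuous (uniform H\"older bounds again coming from the regularity of $\phi$), so Arzel\`a--Ascoli extracts a nonnegative limit that one verifies is a positive eigenfunction; this route sacrifices uniqueness but suffices for the existence claim.

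Finally, normalizing so that $\nu(\psi)=1$ and setting $d\mu=\psi\,d\nu$, invariance rests on the identity
$$L_\phi\big((f\circ T)\,g\big)=f\cdot L_\phi(g),\qquad f,g\in C(Y),$$
which holds because $T(a\omega)=\omega$ for every admissible symbol $a$. Combining this with the two eigen-relations gives, for every $f\in C(Y)$,
\begin{align*}
\int f\circ T\,d\mu &= \int (f\circ T)\psi\,d\nu = e^{-P}\int L_\phi\big((f\circ T)\psi\big)\,d\nu\\
&= e^{-P}\int f\,L_\phi(\psi)\,d\nu = \int f\psi\,d\nu = \int f\,d\mu,
\end{align*}
so $\mu$ is $T$-invariant. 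The main obstacle is the middle stage: producing the strictly positive eigenfunction and pinning its eigenvalue to $e^P$. This is precisely where H\"older regularity of $\phi$ and transitivity of the subshift are indispensable, entering through the contraction of the Hilbert metric on the positive cone.
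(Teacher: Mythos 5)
Note first that the paper does not prove this statement at all: it is quoted as a classical result with a pointer to \cite[Theorem 1.5]{Baladi2000}, so your argument is to be judged against the standard literature proof rather than against anything in the paper. Your overall architecture is exactly that standard proof: the eigenmeasure $\nu$ via Schauder--Tychonoff applied to $\sigma\mapsto L_\phi^*\sigma/\sigma(L_\phi 1)$ (the denominator is indeed bounded below since every point of a transitive SFT has a preimage, so $L_\phi 1>0$), full support via transitivity, the eigenvalue pinning $\kappa\,\nu(\psi)=e^P\nu(\psi)$, and invariance of $d\mu=\psi\,d\nu$ via $L_\phi\big((f\circ T)g\big)=f\,L_\phi(g)$. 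Stages one and three are correct as written.

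The gap is in your preferred route for stage two. Birkhoff cone contraction requires topological \emph{mixing} (primitivity of the transition matrix), not the mere transitivity hypothesized in the theorem. If the SFT is transitive with period $p>1$, the normalized operator $\widetilde L=e^{-P}L_\phi$ has $p$ peripheral eigenvalues ($p$-th roots of unity), so no power of it can strictly contract the Hilbert metric on a cone containing the constant function. Concretely, take $\Sigma=\{1,2\}$ with only the transitions $1\to2$ and $2\to1$ allowed, so that $Y=\{x,y\}$ with $x=1212\cdots$, $y=2121\cdots$. Then $\widetilde L$ is an anti-diagonal $2\times2$ matrix with $\widetilde L^2=\mathrm{id}$; in the projective coordinate $u=\log f(x)-\log f(y)$ it acts as the isometry $u\mapsto -u+c$, the iterates $\widetilde L^n 1$ oscillate between two distinct rays (when $\phi(x)\neq\phi(y)$), and your claim that $\widetilde L$ maps $\mathcal{C}_B$ into a strictly smaller cone also fails, because distinct points have disjoint preimage sets and the estimate $d(a\omega,a\tilde\omega)=\rho\,d(\omega,\tilde\omega)$ never applies to pairs of points in different phase classes. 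Fortunately, the fallback you mention --- Ces\`aro averages of $\widetilde L^n 1$ plus Arzel\`a--Ascoli --- is precisely the argument that works at the stated generality (in the example above the Ces\`aro averages converge to the positive eigenfunction even though the iterates do not), and since the theorem claims only existence you should promote it to the main argument. Two details to patch there: uniform boundedness of $\widetilde L^n1$ is not a consequence of the H\"older regularity of $\phi$ alone; one gets it by combining the uniform log-H\"older ratio estimate on each $1$-cylinder with $\int\widetilde L^n1\,d\nu=1$ and the fact that $\nu$ gives mass bounded below to every $1$-cylinder --- which is available because you established full support of $\nu$ in stage one. Finally, strict positivity of the limiting eigenfunction requires propagating positivity using irreducibility; it is not automatic from the limit being nonnegative and nonzero.
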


Moreover, both $\mu$ and $\nu$ satisfy the following property.

\begin{definition}\label{defigibs}A measure $m$ supported on $Y$ is said to have the \textit{Gibbs property} if there is a function $\phi'$ (called \textit{potential}) and a positive constant $P'$ (called \textit{pressure}) such that for some constant $K>0$, all cylinders $[a]$ and all $\omega'=a\omega\in Y$, the following holds:
$$K^{-1}\leq\frac{m([a])}{e^{-|a|P'}e^{S_{|a|}^{\phi'}(\omega')}}\leq K$$
\end{definition}
\begin{lemma}\label{propgibs}The measures $\mu$ and $\nu$ provided by Theorem \ref{teoruelle} have the Gibbs property for the potential $\phi$ and pressure $P$. Moreover, $\mu$ is the only shift-invariant measure supported on $Y$ satisfying this property.
\end{lemma}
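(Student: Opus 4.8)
The plan is to establish the Gibbs property for $\nu$ first, transfer it to $\mu$ through the density $\psi$, and finally deduce uniqueness from absolute continuity together with ergodicity. For $\nu$, I would iterate the dual eigenrelation $L_\phi^*\nu=e^P\nu$ to get $\nu(f)=e^{-mP}\nu(L_\phi^m f)$ for every $f\in C(Y)$ and every $m$. Taking $f=\mathbbm{1}_{[a]}$ with $|a|=m$ and using the Observation describing $L_\phi^m$, only the summand indexed by $b=a$ survives, so
$$\nu([a])=e^{-mP}\int_{Y_a}E_m^\phi(a\eta)\,d\nu(\eta),$$
where $Y_a=\{\eta:\ a\eta\in Y\}$ is the follower set of $a$. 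The crux is a bounded distortion estimate: since $\phi$ is H\"older, for $\eta,\eta'\in Y_a$ the points $T^j(a\eta)$ and $T^j(a\eta')$ agree on their first $m-j$ coordinates, whence $|S_m^\phi(a\eta)-S_m^\phi(a\eta')|\leq K\sum_{i\geq 1}\rho^i=:C$, a constant independent of $a$ and $m$. Thus $e^{-C}E_m^\phi(\omega')\leq E_m^\phi(a\eta)\leq e^{C}E_m^\phi(\omega')$ uniformly over $\eta\in Y_a$ for any reference point $\omega'=a\eta'\in[a]$, and squeezing the integral gives $e^{-C}\nu(Y_a)\leq \nu([a])/(e^{-mP}E_m^\phi(\omega'))\leq e^{C}\nu(Y_a)$.

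It then remains to bound $\nu(Y_a)$ uniformly away from $0$ and $\infty$. The upper bound is trivial ($\nu(Y_a)\leq 1$); for the lower bound I would use that in a subshift of finite type the follower set $Y_a$ depends only on finitely much of $a$ (its last symbol in a one-step presentation), so there are finitely many distinct nonempty follower sets, each clopen and nonempty whenever $[a]\neq\emptyset$, hence of positive $\nu$-measure because $\nu$ has full support. Setting $\delta:=\min\nu(Y_a)>0$ yields the Gibbs property for $\nu$ with potential $\phi$, pressure $P$, and constant $e^{C}/\delta$. Passing to $\mu$ is then immediate: $\psi$ is continuous and strictly positive on the compact space $Y$, so $0<\psi_-\leq\psi\leq\psi_+<\infty$, and $\psi_-\nu([a])\leq\mu([a])=\int_{[a]}\psi\,d\nu\leq\psi_+\nu([a])$ shows $\mu$ inherits the Gibbs property with an enlarged constant.

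For uniqueness, suppose $m$ is any shift-invariant measure on $Y$ with the Gibbs property for $(\phi,P)$. Comparing its Gibbs estimate with that of $\mu$ cancels the common factor $e^{-mP}E_m^\phi(\omega')$ and yields $K_1^{-1}\leq m([a])/\mu([a])\leq K_1$ uniformly over all cylinders. Since $Y$ is zero-dimensional, every open set is a countable disjoint union of cylinders, so $m(U)\leq K_1\mu(U)$ for open $U$ and, by outer regularity, $m(B)\leq K_1\mu(B)$ for all Borel $B$; in particular $m\ll\mu$. Because $\mu$ is ergodic (indeed exact, by Lemma \ref{lemagibbsexact}), the Birkhoff averages $\frac1N\sum_{k=0}^{N-1}f\circ T^k$ converge $\mu$-a.e., and hence $m$-a.e., to the constant $\int f\,d\mu$ for every bounded measurable $f$; integrating against $m$ and invoking dominated convergence gives $\int f\,dm=\int f\,d\mu$ for all such $f$, so $m=\mu$.

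I expect the main obstacle to be the uniqueness half rather than the estimates. Two points need care there: justifying that uniform control of the cylinder ratios upgrades to genuine absolute continuity $m\ll\mu$ (which relies on zero-dimensionality and regularity, not just on agreement on a generating algebra), and invoking ergodicity of $\mu$ in a logically clean way. The latter is the only ingredient imported from outside the elementary Gibbs-property computation; I would lean on Lemma \ref{lemagibbsexact} (exactness, hence ergodicity) to close the argument. The Gibbs estimates themselves reduce entirely to the bounded distortion bound $C=K\rho/(1-\rho)$, whose finiteness is exactly where H\"older continuity of $\phi$ is used.
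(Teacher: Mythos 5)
Your proof is correct and complete. There is, however, no proof in the paper to compare it with: Lemma \ref{propgibs} is stated as a known fact of the Ruelle--Perron--Frobenius/Bowen theory (only Theorem \ref{teoruelle} gets a citation, to \cite[Theorem 1.5]{Baladi2000}), so the relevant question is whether your argument stands on its own, and it does. The eigenmeasure iteration $\nu(f)=e^{-mP}\nu(L_\phi^m f)$, the bounded-distortion constant $C=K\rho/(1-\rho)$, and the follower-set lower bound constitute exactly the classical argument; note that your computation implicitly corrects a typo in the paper's Observation (the weight in $L_\phi^m$ should be $E_m^\phi(a\omega)$, not $E_m^\phi(\omega)$), and that in a $k$-step presentation $Y_a$ is determined by the last $k$ symbols of $a$, so your finiteness claim holds for general SFTs, not just one-step ones. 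In the uniqueness half, the two points you flagged are indeed the ones needing care, and you resolve both correctly: the cylinder-ratio bound upgrades to $m\leq K_1\mu$ on all Borel sets because open subsets of $Y$ are countable disjoint unions of cylinders and finite Borel measures on compact metric spaces are outer regular; and the Birkhoff step needs shift-invariance of $m$ (so that $\int \frac{1}{N}\sum_{k=0}^{N-1}f\circ T^k\,dm=\int f\,dm$ for every $N$) before dominated convergence is applied --- make that use of invariance explicit. Finally, invoking Lemma \ref{lemagibbsexact} for ergodicity of $\mu$ is not circular: that lemma is an external citation applying to any invariant measure with the Gibbs property, and the first half of your proof places $\mu$ in that class; alternatively, ergodicity (indeed mixing) of the RPF measure is available directly from the spectral theory behind Theorem \ref{teoruelle}.
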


\begin{observation}\label{obsegibbs}There is a constant $K>0$ such that for all $a,b\in\Sigma^*$ and all $\omega,\tilde{\omega}\in X$ such that $ab\omega\in Y$, $ab\tilde{\omega}\in Y$, the following inequality holds
$$\left|\frac{E_{|a|}^\phi(ab\omega)}{E_{|a|}^\phi(ab\tilde{\omega})}-1\right|<K\rho^{|b|}$$
\end{observation}
\begin{proof}By the mean value theorem
\begin{align}
\left|\frac{E_{|a|}^\phi(ab\omega)}{E_{|a|}^\phi(ab\tilde{\omega})}-1\right|=\frac{e^\xi\left|S_{|a|}^\phi(ab\omega)-S_{|a|}^\phi(ab\tilde{\omega})\right|}{E_{|a|}^\phi(ab\tilde{\omega})}\nonumber
\end{align}
for some $\xi\in[\min(S_{|a|}^\phi(ab\omega),S_{|a|}^\phi(ab\tilde{\omega})),\max(S_{|a|}^\phi(ab\omega),S_{|a|}^\phi(ab\tilde{\omega}))]$. And since $\phi$ is H\"{o}lder we have
\begin{align}
\left|S_{|a|}^\phi(ab\omega)-S_{|a|}^\phi(ab\tilde{\omega})\right|&\leq\sum_{j=1}^{|a|}\left|\phi(T^j(ab\omega))-\phi(T^j(ab\tilde{\omega}))\right|\leq\nonumber\\
&\leq\sum_{j=0}^{|a|}K'\rho^{|a|-j}\rho^{|b|}=K\rho^{|b|}\sum_{j=0}^{|a|}\rho^{j}<K'\rho^{|b|}\frac{1}{1-\rho}\nonumber
\end{align}

Then, for $\xi'=\max(S_{|a|}^\phi(ab\omega),S_{|a|}^\phi(ab\tilde{\omega}))$ and for the constant $K$ provided by Lemma \ref{propgibs} we have
\begin{align}
\left|\frac{E_{|a|}^\phi(ab\omega)}{E_{|a|}^\phi(ab\tilde{\omega})}-1\right|&\leq \frac{e^{\xi'}K'\rho^{|b|}(1-\rho)^{-1}}{E_{|a|}^\phi(ab\tilde{\omega})}\leq\nonumber\\
&\leq\frac{K^2\mu([a])K'\rho^{|b|}(1-\rho)^{-1}}{\mu([a])}=K''\rho^{|b|}
\end{align}

\end{proof}

The following is a well-known property of Gibbs measures, and it is mentioned without a proof in \cite{Lalley86}. For the sake of completeness, we provide a proof.

Write $\hat{Y}$ for the two-sided subshift of finite type that extends $Y$, and $\mathcal{H}_k^l$ ($k,l\in\ZZ$) for the $\sigma$-algebra generated by the measurable functions $\omega\mapsto\omega_i$ for $k\leq i\leq l$.
\begin{lemma}\label{lemagibs}Let $\mu$ be the invariant Gibbs measure on $Y$ for the H\"{o}lder potential $\phi$ and let $\hat{\mu}$ be its bilateral extension to $\hat{Y}$. Then
$$\gamma_m= \sup \left\{\left|\frac{\hat{\mu}(A\mid \mathcal{H}_{-\infty}^0)(\omega)}{\hat{\mu}(A\mid \mathcal{H}_{-\infty}^0)(\tilde{\omega})}-1\right|: A\in\mathcal{H}_1^r,\ r<\infty;\omega_n=\tilde{\omega}_n\forall n,-m\leq n\leq 0\right\}$$
satisfies $\gamma_m<C\rho^m$.
\end{lemma}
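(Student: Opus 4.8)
The plan is to reduce the statement to a comparison of masses of finite cylinders and then to exhibit, from the conformal structure of $\mu$, the fact that such masses depend on the distant past only through a single factor that cancels in the relevant ratio. First I would invoke martingale convergence (Theorem \ref{teomart} with the constant sequence $f_k=\mathbbm{1}_A$ and $\mathcal{H}_{-k}^0\nearrow\mathcal{H}_{-\infty}^0$) to write, for $\hat\mu$-a.e. $\omega$ and $A=[w]$ a cylinder on coordinates $1,\dots,r$,
$$\hat{\mu}(A\mid\mathcal{H}_{-\infty}^0)(\omega)=\lim_{k\to\infty}\hat{\mu}(A\mid\mathcal{H}_{-k}^0)(\omega)=\lim_{k\to\infty}\frac{\mu([\omega_{-k}\cdots\omega_0\,w])}{\mu([\omega_{-k}\cdots\omega_0])},$$
the last equality using shift-invariance of $\hat\mu$ to pass to one-sided cylinders. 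The Gibbs bounds (Lemma \ref{propgibs}) keep these ratios above a positive constant, so the limits are positive and the ratio of the two limits for $\omega,\tilde\omega$ equals the limit of the ratios $\Phi_k:=\hat\mu(A\mid\mathcal{H}_{-k}^0)(\omega)/\hat\mu(A\mid\mathcal{H}_{-k}^0)(\tilde\omega)$. Here $\omega,\tilde\omega$ share the common block $b=\omega_{-m}\cdots\omega_0$ of length $m+1$ and differ only in the earlier block, $a=\omega_{-k}\cdots\omega_{-m-1}$ versus $\tilde a=\tilde\omega_{-k}\cdots\tilde\omega_{-m-1}$.

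The key step is a sharp representation of a cylinder mass. From $L_\phi^{\ast}\nu=e^{P}\nu$ and $d\mu=\psi\,d\nu$ (Theorem \ref{teoruelle}), applying $\nu(g)=e^{-|c|P}\nu(L_\phi^{|c|}g)$ to $g=\psi\,\mathbbm{1}_{[c]}$ gives, for any admissible word $c$,
$$\mu([c])=e^{-|c|P}\int \psi(c\sigma)E_{|c|}^\phi(c\sigma)\,\mathbbm{1}[c\sigma\in Y]\,d\nu(\sigma).$$
Applying this to $c=abw$ and $c=ab$ and splitting $E^\phi$ with the cocycle relation $E_{|a|+s}^\phi(a\cdot)=E_{|a|}^\phi(a\cdot)E_s^\phi(\cdot)$, the integrand for $\mu([abw])$ becomes $\psi(abw\sigma)E_{|a|}^\phi(abw\sigma)\cdot E_{m+1+r}^\phi(bw\sigma)$, and analogously for $\mu([ab])$. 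Now Observation \ref{obsegibbs}, applied with the buffer block $b$ of length $m+1$, shows that $E_{|a|}^\phi(ab\cdot)$ is constant in its continuation up to a factor $1+O(\rho^{m+1})$, while continuity of $\psi$ (with modulus $\varpi_\psi$, bounded below by $\min\psi>0$) shows $\psi(ab\cdot)$ is constant in its continuation up to a factor $1+\epsilon_k$ with $\epsilon_k\le\varpi_\psi(\rho^{|a|})/\min\psi\to 0$ as $k\to\infty$. Writing $G(a):=\psi(ab\sigma^{\ast})E_{|a|}^\phi(ab\sigma^{\ast})$ for a fixed admissible reference continuation $\sigma^{\ast}$ of $b$,
$$\mu([abw])=e^{-(|a|+m+1+r)P}\,G(a)\,(1+O(\rho^{m+1})+\epsilon_k)\!\int\! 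E_{m+1+r}^\phi(bw\sigma)\mathbbm{1}[\cdots]\,d\nu(\sigma),$$
with the same expression for $\mu([ab])$ but $E_{m+1}^\phi(b\sigma)$ replacing $E_{m+1+r}^\phi(bw\sigma)$.

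Consequently $\hat\mu(A\mid\mathcal{H}_{-k}^0)(\omega)=e^{-rP}(I_{bw}/I_b)(1+O(\rho^{m+1})+\epsilon_k)$, where $I_{bw}:=\int E_{m+1+r}^\phi(bw\sigma)\mathbbm{1}[\cdots]\,d\nu$ and $I_b:=\int E_{m+1}^\phi(b\sigma)\mathbbm{1}[\cdots]\,d\nu$ depend only on $b$ and $w$; the entire dependence on the distant past has been absorbed into $G(a)$, which cancels between numerator and denominator of the \emph{single} conditional probability. The identical computation for $\tilde\omega$ produces the same main term $e^{-rP}(I_{bw}/I_b)$, so $\Phi_k=1+O(\rho^{m+1})+o_k(1)$ with all implied constants depending only on $K$, $\rho$ and $\psi$, hence uniform over $A$, $b$ and the pair. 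Letting $k\to\infty$ kills $\epsilon_k$ and yields $\big|\,\hat\mu(A\mid\mathcal{H}_{-\infty}^0)(\omega)/\hat\mu(A\mid\mathcal{H}_{-\infty}^0)(\tilde\omega)-1\big|\le C\rho^{m+1}$ for $\hat\mu$-a.e. pair. Since $\mathcal{H}_1^r$ is finite with atoms the length-$r$ cylinders, a general $A\in\mathcal{H}_1^r$ is a finite disjoint union $\bigsqcup_i[w_i]$; as each cylinder satisfies the same uniform multiplicative bound, the conditional probability of $A$ is a convex combination and inherits the bound, so taking the supremum gives $\gamma_m<C\rho^m$. The main obstacle is precisely this decoupling: the raw bounds of Definition \ref{defigibs} only control cylinder ratios up to the fixed constant $K$, which is far too crude for a geometric rate. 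The gain comes from the conformal representation, which exposes the distant-past dependence as the single cancelling factor $G(a)$ and reduces everything to the $O(\rho^{m+1})$ regularity supplied by Observation \ref{obsegibbs}.
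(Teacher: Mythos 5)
Your proof is correct and takes essentially the same route as the paper's: both rest on the Ruelle--Perron--Frobenius representation of cylinder masses, the cocycle splitting of $E^{\phi}$, and Observation \ref{obsegibbs} applied across the shared buffer block to show that the distant past enters each cylinder mass only through a multiplicative factor that cancels in conditional ratios up to $1+O(\rho^{m})$, followed by the same reduction of a general $A\in\mathcal{H}_1^r$ to single cylinders and a passage to the infinite-past limit. The differences are only bookkeeping: you factor out $G(a)$ and exhibit the common main term $e^{-rP}I_{bw}/I_b$ where the paper works with double ratios controlled by its $M$- and $N$-terms, and you keep $\psi$ correctly evaluated at $c\sigma$ inside the integral (the paper's displayed formula puts $\psi$ at the continuation $\omega$, a harmless slip), which costs you the extra $\epsilon_k$ error from the modulus of continuity of $\psi$ that you rightly dispose of in the $k\to\infty$ limit.
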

\begin{proof}We first make the following observation. Let $\nu$ be the eigenvector measure of $L_\phi^*$ given by Theorem \ref{teoruelle} and $P$ be the pressure of $\phi$. For any $a=a_1\dots a_m\in\Sigma^*$ we have
\begin{align}\nu([a])=e^{-mP}L_\phi^{*m}(\nu)(\alpha)&=e^{-mP}\int\sum_{\substack{b\omega\\b\in\Sigma^m}}\mathbbm{1}_{[a]}(b\omega)e^{S_m^\phi(b\omega)} d\nu(\omega)=\nonumber\\
&=e^{-mP}\int e^{S_m^\phi(a\omega)} d\nu(\omega)\nonumber
\end{align}
Let $\psi$ be the eigenfunction of $L_\phi$. Since $d\mu=\psi d\nu$, we have
$$\mu([a])=e^{-mP}\int e^{S_m^\phi(a\omega)} \psi(\omega)d\nu(\omega)=e^{-|a|P}\int E_{|a|}^\phi(a\omega) \psi(\omega)d\nu(\omega)$$
Now take any $b,c\in\Sigma^*$ such that $[abc]$ is not empty (equivalently, $\nu([abc])>0$). Fix $p=E^\phi_{|a|}(abc\tilde{\omega})$ for some $\tilde{\omega}$ such that $abc\tilde{\omega}\in Y$.
\begin{align}
\frac{\mu([abc])}{\mu([ab])}&=\frac{e^{-|bc|P}\int \left(E_{|a|}^\phi(abc\omega)-p\right)E_{|bc|}^\phi(bc\omega)\psi(\omega)d\nu(\omega)+p\mu([bc])}{e^{-|b|P}\int \left(E_{|a|}^\phi(ab\omega)-p\right)E_{|b|}^\phi(b\omega)\psi(\omega)d\nu(\omega)+p\mu([b])}=\nonumber\\
&=\frac{\mu([bc])}{\mu([b])}\frac{M(a,bc,p)+1}{M(a,b,p)+1}
\end{align}
where
$$M(a,b,p)=e^{-|b|P}\frac{\int\left(E_{|a|}^\phi(ab\omega)/p-1\right) E_{|b|}^\phi(b\omega) \psi(\omega)d\nu(\omega)}{\mu([b])}$$

From Observation \ref{obsegibbs} and the fact that both $E_m^\phi$ and $\psi$ are nonnegative we get
$$\left|M(a,b,p)\right|\leq K\rho^{|b|}$$
for some constant $K$ independent of $a$, $b$ or $p$ (and similarly for $bc$ instead of $b$).

If we now take some $a'\in\Sigma^*$ and $p'=e^{S^\phi_{|a'|}(a'bc\tilde{\omega})}$ we get
$$\frac{\mu([abc])}{\mu([ab])}\frac{\mu([a'b])}{\mu([a'bc])}=\frac{N(a,a',bc,b,p,p')+1}{N(a',a,b,bc,p,p')+1}$$
where
$$N(a,a',d,d',p,p')=M(a,d,p)M(a',d',p')+M(a,d,p)+M(a',d',p')$$
and then $|N(a,a',d,d',p,p')|<K'\rho^{|b|}$ for some constant $K'$ independent of $a$, $a'$, $d$, $d'$, $p$ or $p'$. It follows that
\begin{equation}\label{ineqgibs}\left|\frac{\mu([abc]\mid[ab])}{\mu([a'bc]\mid[a'b])}-1\right|<K''\rho^{|b|}
\end{equation}
for some constant $K''$ independent of $a$, $a'$, $b$, $c$, $p$ and $p'$.

Now, a set $A\in\mathcal{H}_1^r$ with $r<\infty$ is actually a union of cylinders $[c]$ of length $r$, and since $\hat{\mu}(A\mid\mathcal{H}_{-\infty}^0)$ is the sum of $\hat{\mu}([c]\mid\mathcal{H}_{-\infty}^0)$ for all those finitely many $c$, it suffices to consider $A=[c]$ . By invariance, for any two-sided sequence $\omega$ in $\hat{Y}$ the conditional probability of $A=[c]$ on the past of length $k$ is $$\hat{\mu}(A\mid\mathcal{H}_{-k}^0)(\omega)=\mu([w_{-k}\dots w_0 c]\mid[w_{-k}\dots w_0])$$
So (\ref{ineqgibs}) implies that
\begin{equation}\label{ecugibs}
\left|\frac{\mu([w_{-k-l}\dots w_{-k-1}w_{-k}\dots w_0c]\mid[w_{-k-l}\dots w_{-k-1}w_{-k}\dots w_0])}{\mu([\tilde{w}_{-k-l}\dots \tilde{w}_{-k-1}w_{-k}\dots w_0c]\mid[\tilde{w}_{-k-l}\dots \tilde{w}_{-k-1}w_{-k}\dots w_0])}-1\right|<K''\rho^{|b|}
\end{equation}
and that
$$\left|\hat{\mu}(A\mid\mathcal{H}_{-k}^0)(\omega)-\hat{\mu}(A\mid\mathcal{H}_{-m}^0)(\omega)\right|<K''\rho^{\min(k,m)}$$

and hence $\hat{\mu}(A\mid\mathcal{H}_{-k}^0)(\omega)\to\hat{\mu}(A\mid\mathcal{H}_{-\infty}^0)(\omega)$ uniformly for all $\omega$ and all cylinder sets $A$. Combining this with (\ref{ecugibs}) we get the desired exponential speed of convergence.
\end{proof}

\section{Proof of the main result}\label{secproof}
In this section we will prove Theorem \ref{genericity2}. The outline of this proof is the following:

\begin{enumerate}
\item First of all, we reduce the problem to a fixed subalgebra $\mathcal{A}$ of $C(X)$ such that any $f$ in $\mathcal{A}$ is the linear combination of products of some characteristic functions of subsets of $\Omega_+$ times products of functionals $\phi_{[a]\times[b]}$ that evaluate the $\mathcal{P}(\Omega_+)$-component of $X$ on cylinder products of the form $[a]\times[b]\subseteq\Omega_+$. The $S$-shifts of $f$ will then have these functionals evaluated on magnified versions of the original measure, a magnification which amounts to a shift of the product cylinders and a conditioning of the measure .
\item In the conformal case (see \cite{HochmanJEMS}) this conditioning is equivalent via invariance to a conditioning on a monotone sequence of $\sigma$-algebras in the natural extension of the system to a $\mathbb{Z}$-shift, so that they may be treated using Doob's theorem and Maker's generalized ergodic theorem. The problem in our case is that cylinder products and the algebras on which conditioning occurs are shifted at different speeds ($k$ and $l_k(t)$, respectively), so this cannot be done so easily. Our first task is then to `split' those conditioned measures into two components, one shifted at speed $k$, the other at speed $l_k(t)$.
\item We will then show that these components converge almost surely to some fixed function on $\Omega$. For some of these components to converge an ``asymptotic memorylessness'' or ``asymptotic Markovianity'' will be needed. It is here that the Gibbs condition is used. 
\item Finally, our original averages will have been reduced to some multiple ergodic average of two functions, one shifted at speed $k$, the other at $l_k(t)$, and Theorem \ref{teo2} is used to show convergence. Ergodicity is proven in a similar fashion.
\end{enumerate}

The proof will assume $q=1$, even though Theorem \ref{genericity2} ensures the convergence of all $q$-sparse distributions. Yet convergence for $q>1$ is easily implied by the following facts:
\begin{enumerate}
\item The $q$-length translation $Y^q$ of a subshift of finite type $Y\subseteq \Sigma^\NN$ that regards each sequence in $Y$ as a sequence in $\left(\Sigma^q\right)^\NN$ (i.e. as a sequence of symbols that are words of length $q$) is a subshift of finite type for the shift transformation $T_q=T^q$.
\item A $T$-invariant measure $\mu$ supported on $Y$ is $T^q$-invariant and can thus be regarded as a $T_q$-invariant measure $\mu_q$ supported on $Y^q$.
\item If $\mu$ has the Gibbs property for a H\"{o}lder potential $\phi$ with pressure $P$ then the measure $\mu_q$ has the Gibbs property for the H\"{o}lder potential $\phi_q=S_q^\phi$ with pressure $qP$ (this can be checked directly from Definition \ref{defigibs}).
\item Then by Lemma \ref{propgibs} $\mu_q$ is the invariant measure provided by Theorem \ref{teoruelle}, so that Lemma \ref{lemagibs} applies and the following proof carries on for the system $(Y^q,\mu_q,T_q)$.
\end{enumerate}

\subsection{From conditional measures to ergodic averages}
Let $\mu$ be any shift-invariant, weak mixing measure. We recall some notation of Subsection \ref{subsecmeasurescp}: $\phi_{\overline{C}}(\nu)=\nu(\overline{C})$ and also $\overline{A}=\pi_\Lambda^{-1}(A)\times\Lambda'^\ZZ$ for $A$ a measurable subset of $\Lambda^\NN$. When there is no additional subscript, $\mathbb{E}$ will denote expectation with respect to the fixed, original measure $\mu$.

Recall the definition of $\mu_{t,k}^\omega$ and notice that, if we extend $\mu$ to the $\ZZ$-system $\Omega$ and we take any $\tilde{\omega}\in\pi^{-1}(\omega)$, we have, by (\ref{shiftedcond}),

\begin{align}\mu_{t,k}^{\omega}(A\times B)&=\mu\left(T^{-k}(\overline{A})\cap T^{-l_k(t)}(\overline{B})\mid \ \mathcal{F}_1^k\vee\mathcal{G}_1^{l_k(t)}\right)(\omega)=\nonumber\\
&=\mathbb{E}\left[T^{k}(\mathbbm{1}_{\overline{A}} ) T^{l_k(t)}(\mathbbm{1}_{\overline{B}})\mid \mathcal{F}_1^k\vee\mathcal{G}_1^{l_k(t)}\right](\tilde{\omega})=\nonumber\\
&=\mathbb{E}\left[T^{k-r}(\mathbbm{1}_{\overline{A}} ) T^{l_k(t)-r}(\mathbbm{1}_{\overline{B}})\mid \mathcal{F}_{1-r}^{k-r}\vee\mathcal{G}_{1-r}^{l_k(t)-r}\right]\circ T^r(\tilde{\omega})=\nonumber\\
&=\mu\left(T^{-(k-r)}(\overline{A})\cap T^{-(l_k(t)-r)}(\overline{B})\mid \ \mathcal{F}_{1-r}^{k-r}\vee\mathcal{G}_{1-r}^{{l_k(t)-r}}\right)(T^r(\tilde{\omega}))\nonumber
\end{align}


Thus, setting $r=l_k(t)$, when $\mu_{t,k}^\omega$ is evaluated on product sets it has the form
\begin{equation}\label{ecu}\mu_{t,k}^{\omega}(A\times B)=F_{t,k}^{A\times B}\circ T^{l_k(t)}(\iota(\omega))
\end{equation}
for any $\iota(\omega)\in\Omega$ such that $\pi(\iota(\omega))=\omega$ (i.e. $\iota$ is a section of $\pi$) and $F_{t,k}^{A\times B}:\Omega\to\RR$ the measurable function given by
\begin{align}
F_{t,k}^{A\times B}(\omega)&=\mu\left(T^{-s_k(t)}(\overline{A})\cap \overline{B}\mid \ \mathcal{F}_{-l_k(t)+1}^{s_k(t)}\vee\mathcal{G}_{-l_k(t)+1}^0\right)(\omega)\nonumber\\
&=\mathbb{E}\left[\mathbbm{1}_{T^{-s_k(t)}(\overline{A})}\mathbbm{1}_{\overline{B}}\mid \ \mathcal{F}_{-l_k(t)+1}^{s_k(t)}\vee\mathcal{G}_{-l_k(t)+1}^0\right](\omega)
\end{align}
with $s_k(t):=k-l_k(t)$ (as with $l_k(t)$ we may write $s_k=s_k(0)$).

Notice that the fact that (\ref{ecu}) is true for any $\iota(\omega)$ such that $\pi(\iota(\omega))=\omega$ means $F_{t,k}^{A\times B}\circ T^{l_k(t)}$ is $\mathcal{F}_1^\infty\vee\mathcal{G}_1^\infty$-measurable.

We will also use the following measurable functions on $\Omega$, for measurable $A\times B\subseteq \Sigma^\NN$.

$$G_{t,k}^{A\times B}(\omega)=\mu\left(\overline{A}\cap T^{-s_k(t)}(\overline{B})\mid \ \mathcal{F}_{-k+1}^0\vee\mathcal{G}_{-k+1}^{-s_k(t)}\right)(\omega)$$
and, again by invariance,

$$\mu_{t,k}^{\omega}(A\times B)=G_{t,k}^{A\times B}\circ T^{k}(\iota(\omega))$$
for any $\iota(\omega)\in\pi^{-1}(\omega)$.

Now, consider the following set of measurable functions on $X$.
\begin{align}
\mathcal{S}=&\{f(t,\omega,\nu)=\mathbbm{1}_I(t)\mathbbm{1}_{\overline{[c]}}(\omega)\mathbbm{1}_{\overline{[d]}}(\omega)\phi_{[a_1]\times [b_1]}(\nu)\dots\phi_{[a_r]\times [b_r]}(\nu):\nonumber\\
&I\in\mathcal{I},\ c,a_i\in \Lambda^*,\ d, b_i\in \Lambda'^*,\text{ for some $r$ and $i\leq r$}\}\nonumber
\end{align}
Let $\mathcal{A}$ be the linear span of $\mathcal{S}$. $\mathcal{A}$ clearly is an algebra that separates points and even though its functions are not continuous ($\mathbbm{1}_I$ is not continuous on $\mathbb{T}$), it is well known that any continuous $g\in C(\mathbb{T})$ can be uniformly approximated by functions in the linear span of $\{\mathbbm{1}_I:\ I\text{  an interval}\}$ and this implies that any $f\in C(X)$ may be uniformly approximated by functions in $\mathcal{A}$. Hence, it is enough to show the convergence of (\ref{genericity}) for functions in $\mathcal{S}$. 

Fix some $f\in\mathcal{S}$, that is,
\begin{equation}\label{efe}
f(t,\omega,\nu)=\mathbbm{1}_I(t)\mathbbm{1}_{[c]}(\omega^1)\mathbbm{1}_{[d]}(\omega^2)\phi_{[a_1]\times [b_1]}(\nu)\dots\phi_{[a_r]\times [b_r]}(\nu)
\end{equation}

As was hinted at the beginning of this section, we would like to somehow `split' the shifted conditional probabilities $S^k(\phi_{[a_i]\times[b_i]})(t,\omega,\mu)=\mu_{t,k}^{\omega}([a_i]\times [b_i])$ into the product of two conditional probabilities, one evaluating on a set shifted by $k$, the other one evaluating on a set shifted by $l_k(t)$.

Notice first that we have a family of orthogonal projections $\{\Pi_{m,n}'\}_{m,n\in\mathbb{N} }$ on $L^2(\Omega)$ that map $f\in L^2(\Omega)\mapsto \mathbb{E}[f\mid \mathcal{F}_{-n+1}^m\vee\mathcal{G}^{0}_{-n+1}]$. Thus, for any $i$, we may then write
$$\mathbbm{1}_{\overline{[b_i]}}=\Pi_{t,k}(\mathbbm{1}_{\overline{[b_i]}})+\Phi_{t,k}(\mathbbm{1}_{\overline{[b_i]}})$$
where $\Pi_{t,k}=\Pi_{s_k(t),l_k(t)}'$ and $\Phi_{t,k}=\textbf{id}-\Pi_{t,k}$ (similarly, we write $\Pi_\infty=\Pi_{\infty,\infty}'$ and $\Phi_\infty=\textbf{id}-\Pi_\infty$). Then,
\begin{align}\label{ecusplit}
F_{t,k}^{[a_i]\times[b_i]}&=\mathbb{E}\left[\mathbbm{1}_{\overline{[b_i]}}\mathbbm{1}_{T^{-s_k(t)}(\overline{[a_i]})} \mid \ \mathcal{F}_{-l_k(t)+1}^{s_k(t)}\vee\mathcal{G}_{-l_k(t)+1}^{0}\right]=\nonumber\\
&=\mathbb{E}\left[\Pi_{t,k}(\mathbbm{1}_{\overline{[b_i]}})\mathbbm{1}_{T^{-s_k(t)}(\overline{[a_i]})} \mid \ \mathcal{F}_{-l_k(t)+1}^{s_k(t)}\vee\mathcal{G}_{-l_k(t)+1}^{0}\right]+\nonumber\\
&+\mathbb{E}\left[\Phi_{t,k}(\mathbbm{1}_{\overline{[b_i]}})\mathbbm{1}_{T^{-s_k(t)}(\overline{[a_i]})} \mid \ \mathcal{F}_{-l_k(t)+1}^{s_k(t)}\vee\mathcal{G}_{-l_k(t)+1}^{0}\right]\nonumber\\
&=\Pi_{t,k}(\mathbbm{1}_{\overline{[b_i]}})\mathbb{E}\left[\mathbbm{1}_{T^{-s_k(t)}(\overline{[a_i]})} \mid \  \mathcal{F}_{-l_k(t)+1}^{s_k(t)}\vee\mathcal{G}_{-l_k(t)+1}^{0}\right]+\nonumber\\
&+\Pi_{t,k}\left(\Phi_{t,k}(\mathbbm{1}_{\overline{[b_i]}})\mathbbm{1}_{T^{-s_k(t)}(\overline{[a_i]})} \right)\nonumber=\\
&=\Pi_{t,k}(\mathbbm{1}_{\overline{[b_i]}})\Pi_{t,k}\left(\mathbbm{1}_{T^{-s_k(t)}(\overline{[a_i]})} \right)+\Pi_{t,k}\left(\Phi_{t,k}(\mathbbm{1}_{\overline{[b_i]}})\mathbbm{1}_{T^{-s_k(t)}(\overline{[a_i]})} \right)
\end{align}
The first summand, which we will denote by $C_{t,k}^{[a_i]\times[b_i]}$, is our desired `split' term. Denote the second summand by $D_{t,k}^{[a_i]\times[b_i]}$. Next, we will show this term can be ignored.

\begin{lemma}\label{lema1}$D_{t,k}^{[a_i]\times[b_i]}\to 0$ a.s. when $k\to \infty$
\end{lemma}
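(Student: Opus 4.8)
The plan is to read $D_{t,k}^{[a_i]\times[b_i]}$ as a conditional expectation of a product of two functions which, by the Gibbs property, become asymptotically conditionally independent, so that the product factors and the mean-zero factor kills it. Write $\mathcal{B}_k=\mathcal{F}_{-l_k(t)+1}^{s_k(t)}\vee\mathcal{G}_{-l_k(t)+1}^0$ for the $\sigma$-algebra onto which $\Pi_{t,k}$ projects, and set $\phi_k=\Phi_{t,k}(\mathbbm{1}_{\overline{[b_i]}})$ and $h_k=\mathbbm{1}_{T^{-s_k(t)}(\overline{[a_i]})}$, so that $D_{t,k}^{[a_i]\times[b_i]}=\mathbb{E}[\phi_k h_k\mid\mathcal{B}_k]$. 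By construction $\mathbb{E}[\phi_k\mid\mathcal{B}_k]=0$ and $|\phi_k|\le 1$; moreover $\phi_k$ is measurable with respect to $\mathcal{B}_k$ together with the finitely many second-coordinate symbols $\omega^2_1,\dots,\omega^2_{|b_i|}$, while $h_k$ depends only on the first-coordinate symbols at positions $s_k(t)+1,\dots,s_k(t)+|a_i|$, that is, it sits just beyond the first-coordinate window of $\mathcal{B}_k$.

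First I would enlarge the conditioning to $\mathcal{C}_k=\mathcal{B}_k\vee\mathcal{G}_1^{|b_i|}$, which renders $\phi_k$ measurable. Using the tower property, the $\mathcal{B}_k$-measurability of $\mathbb{E}[h_k\mid\mathcal{B}_k]$, and $\mathbb{E}[\phi_k\mid\mathcal{B}_k]=0$,
\begin{align*}
D_{t,k}^{[a_i]\times[b_i]}
&=\mathbb{E}\big[\phi_k\,\mathbb{E}[h_k\mid\mathcal{C}_k]\bigm|\mathcal{B}_k\big]
=\mathbb{E}\big[\phi_k\big(\mathbb{E}[h_k\mid\mathcal{C}_k]-\mathbb{E}[h_k\mid\mathcal{B}_k]\big)\bigm|\mathcal{B}_k\big],
\end{align*}
whence $\big|D_{t,k}^{[a_i]\times[b_i]}\big|\le\mathbb{E}\big[\,\big|\mathbb{E}[h_k\mid\mathcal{C}_k]-\mathbb{E}[h_k\mid\mathcal{B}_k]\big|\bigm|\mathcal{B}_k\big]$. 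Everything has now been reduced to showing that the extra data in $\mathcal{C}_k$ — the second-coordinate symbols at positions $1,\dots,|b_i|$ — barely affects the conditional law of the distant first-coordinate event $h_k$.

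This asymptotic conditional independence is exactly where the Gibbs property enters, and I expect it to be the main obstacle. Using the invariance identity \eqref{shiftedcond} I would shift by $s_k(t)$, turning $h_k$ into the near-future cylinder event $\overline{[a_i]}\in\mathcal{F}_1^{|a_i|}$; under this shift $\mathcal{B}_k$ and $\mathcal{C}_k$ become window algebras sharing the same first-coordinate recent past $\mathcal{F}_{-k+1}^0$ and differing only in a block of second-coordinate symbols sitting around position $-s_k(t)$, i.e.\ at distance $\gtrsim s_k(t)-|b_i|$ from the event. Since $\alpha\in[0,1)$ we have $s_k(t)=k-l_k(t)\to\infty$, and writing $\mathbb{E}[h_k\mid\mathcal{B}_k]$ as the $\mathcal{B}_k$-average of $\mathbb{E}[h_k\mid\mathcal{C}_k]$ over that deep block, the difference is the deviation of $\mathbb{E}[h_k\mid\mathcal{C}_k]$ from its average as the block varies. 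Observation \ref{obsegibbs} together with the exponential memorylessness of Lemma \ref{lemagibs} bounds the variation of the conditional probability of a future cylinder under such a distant change of the past by a geometric factor, uniformly in the sample point, giving
$$\big|\mathbb{E}[h_k\mid\mathcal{C}_k]-\mathbb{E}[h_k\mid\mathcal{B}_k]\big|\le C\rho^{\,s_k(t)-|b_i|}\quad\text{a.s.}$$
and hence $\big|D_{t,k}^{[a_i]\times[b_i]}\big|\le C\rho^{\,s_k(t)-|b_i|}\to 0$. Because the bound is pointwise and decays geometrically this yields the almost sure statement directly; if one only extracts the estimate in $L^1$, the geometric rate is summable in $k$ and Borel--Cantelli finishes the argument. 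The delicate point in making this rigorous is to pass from the two-sided full-past formulation of Lemma \ref{lemagibs} to the one-sided \emph{window} conditioning occurring here, controlling simultaneously the truncation of the past at $-l_k(t)+1$, the absence of the recent second-coordinate past in $\mathcal{B}_k$, and the mismatch between the first- and second-coordinate window lengths; each of these contributions is again geometrically small by the same Gibbs cocycle estimate.
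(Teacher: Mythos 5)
Your algebraic reduction is sound: writing $\mathcal{B}_k=\mathcal{F}_{-l_k(t)+1}^{s_k(t)}\vee\mathcal{G}_{-l_k(t)+1}^0$, $\mathcal{C}_k=\mathcal{B}_k\vee\mathcal{G}_1^{|b_i|}$, the tower-property computation and the bound $|D_{t,k}^{[a_i]\times[b_i]}|\le\mathbb{E}\bigl[\,|\mathbb{E}[h_k\mid\mathcal{C}_k]-\mathbb{E}[h_k\mid\mathcal{B}_k]|\bigm|\mathcal{B}_k\bigr]$ are correct. The gap is the estimate you then assert, $|\mathbb{E}[h_k\mid\mathcal{C}_k]-\mathbb{E}[h_k\mid\mathcal{B}_k]|\le C\rho^{s_k(t)-|b_i|}$ a.s., which does not follow from Observation \ref{obsegibbs} or Lemma \ref{lemagibs} and is in fact false in this uniform form. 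Those results control the sensitivity of $\hat{\mu}(A\mid\mathcal{H}_{-\infty}^0)$ --- conditioning on \emph{complete} symbols --- to changes in the remote past. Your $\mathcal{B}_k$ contains only partial information (first coordinates, no second coordinates) on the window $1,\dots,s_k(t)$, so what you need is that, \emph{conditionally on these partial observations}, a first-coordinate event beyond $s_k(t)$ is almost independent of second-coordinate data at positions $1,\dots,|b_i|$. That is a filter-stability statement for a partially observed Gibbs chain, and it can fail on rare events: on a mixing SFT one can include a symbol that may only be entered by flipping the second coordinate; along first-coordinate observation words made of such symbols the second coordinate propagates deterministically, so on an event of probability roughly $e^{-cs_k(t)}$ --- positive for every $k$ --- the extra conditioning on $\omega^2_1$ moves $\mathbb{E}[h_k\mid\cdot]$ by a fixed constant (for a suitable asymmetric H\"older potential). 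The Gibbs bounds do not pass through the weighted average over the unobserved second coordinates, because those weights are exactly what the extra conditioning perturbs; your appeal to them is circular. Rescuing the scheme would require showing the bad sets have summable measure (a large-deviations/filtering argument), which is nowhere in the paper's toolkit.

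The gap is also unnecessary, because the lemma requires no Gibbs property and no quantitative estimate at all; the paper stresses at the start of Section \ref{secgenericity} that nothing up to that point uses the Gibbs hypothesis. Its proof runs as follows: since $\mathcal{B}_k$ increases to $\mathcal{F}_{-\infty}^\infty\vee\mathcal{G}_{-\infty}^0$, martingale convergence gives $\Phi_{t,k}(\mathbbm{1}_{\overline{[b_i]}})\to\Phi_\infty(\mathbbm{1}_{\overline{[b_i]}})$ a.s., so by Theorem \ref{teomart} the difference $D_{t,k}^{[a_i]\times[b_i]}-\Pi_{t,k}\bigl(h_k\,\Phi_\infty(\mathbbm{1}_{\overline{[b_i]}})\bigr)\to 0$ a.s.; and the substituted term vanishes \emph{identically}, because $h_k=\mathbbm{1}_{T^{-s_k(t)}(\overline{[a_i]})}$ is measurable with respect to $\mathcal{F}_{-\infty}^\infty\subseteq\mathcal{F}_{-\infty}^\infty\vee\mathcal{G}_{-\infty}^0$, $\Pi_{t,k}$ projects onto a sub-$\sigma$-algebra of $\mathcal{F}_{-\infty}^\infty\vee\mathcal{G}_{-\infty}^0$, and $\Phi_\infty(\mathbbm{1}_{\overline{[b_i]}})$ is orthogonal in $L^2$ to every $\mathcal{F}_{-\infty}^\infty\vee\mathcal{G}_{-\infty}^0$-measurable function. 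The structural point you missed is that your ``distant'' event $h_k$ lies inside the limit algebra for \emph{every} $k$ --- what matters is that it involves only the first coordinate, not that it is far away --- so exact orthogonality replaces the quantitative near-independence you were trying to prove, and the lemma holds for arbitrary invariant measures.
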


\begin{proof}Notice that $\Phi_{t,k}(\mathbbm{1}_{\overline{[b_i]}})=\mathbbm{1}_{\overline{[b_i]}}-\mathbb{E}\left[\mathbbm{1}_{\overline{[b_i]}}\mid \  \mathcal{F}_{-l_k(t)+1}^{s_k(t)}\vee\mathcal{G}_{-l_k(t)+1}^{0}\right]$. So
$$\Phi_{t,k}(\mathbbm{1}_{\overline{[b_i]}})\to \mathbbm{1}_{\overline{[b_i]}}-\mathbb{E}\left[\mathbbm{1}_{\overline{[b_i]}}\mid \  \mathcal{F}_{-\infty}^{\infty}\vee\mathcal{G}_{-\infty}^{0}\right]=\Phi_\infty(\mathbbm{1}_{\overline{[b_i]}})$$
a.s. and in $L^2$ by the martingale convergence theorem and the fact that $s_k(t),l_k(t)\to \infty$.

Then $\mathbbm{1}_{T^{-s_k(t)}(\overline{[a_i]})}(\Phi_\infty(\mathbbm{1}_{\overline{[b_i]}})-\Phi_{t,k}(\mathbbm{1}_{\overline{[b_i]}}))\to 0$ a.s., and if we write
\begin{align}
\tilde{D}_{t,k}^{[a_i]\times[b_i]}&=\mathbb{E}\left[\mathbbm{1}_{T^{-s_k(t)}(\overline{[a_i]})}\Phi_\infty(\mathbbm{1}_{\overline{[b_i]}})\mid \  \mathcal{F}_{-l_k(t)+1}^{s_k(t)}\vee\mathcal{G}_{-l_k(t)+1}^{0}\right]=\nonumber\\
&=\Pi_{t,k}\left(\mathbbm{1}_{T^{-s_k(t)}(\overline{[a_i]})}\Phi_\infty(\mathbbm{1}_{\overline{[b_i]}})\right)\nonumber
\end{align}
we have, by Theorem \ref{teomart},
\begin{align}D_{t,k}^{[a_i]\times[b_i]}-\tilde{D}_{t,k}^{[a_i]\times[b_i]}&=\mathbb{E}\left[\mathbbm{1}_{T^{-s_k(t)}(\overline{[a_i]})}(\Phi_{t,k}-\Phi_\infty)(\mathbbm{1}_{\overline{[b_i]}})\mid   \mathcal{F}_{-l_k(t)+1}^{s_k(t)}\vee\mathcal{G}_{-l_k(t)+1}^{0}\right]\nonumber
\end{align}
which converges a.s. to $\Pi_\infty(0)=0$. Let us now show that $\tilde{D}_{t,k}^{[a_i]\times[b_i]}=0$. Since conditional expectation is an orthogonal projection we have

\begin{align}
\left\|\tilde{D}_{t,k}^{[a_i]\times[b_i]}\right\|_2^2&=\int_\Omega\left|\mathbb{E}\left[\mathbbm{1}_{T^{-s_k(t)}(\overline{[a_i]})}\Phi_\infty(\mathbbm{1}_{\overline{[b_i]}})\mid \  \mathcal{F}_{-l_k(t)+1}^{s_k(t)}\vee\mathcal{G}_{-l_k(t)+1}^{0}\right]\right|^2\ d\mu=\nonumber\\
&=\int_\Omega\mathbbm{1}_{T^{-s_k(t)}(\overline{[a_i]})}\Phi_\infty(\mathbbm{1}_{\overline{[b_i]}})\ \tilde{D}_{t,k}^{[a_i]\times[b_i]}\ d\mu=\nonumber\\
&=\big<\Phi_\infty(\mathbbm{1}_{\overline{[b_i]}}),\mathbbm{1}_{T^{-s_k(t)}(\overline{[a_i]})}\Pi_{t,k}\left(\mathbbm{1}_{T^{-s_k(t)}(\overline{[a_i]})}\Phi_\infty(\mathbbm{1}_{\overline{[b_i]}})\right)\big>_{L^2(\Omega)}\nonumber
\end{align}
Notice that the $L^2$ function $\Phi_\infty(\mathbbm{1}_{\overline{[b_i]}})$ is orthogonal to any  $\mathcal{F}_{-\infty}^{\infty}\vee\mathcal{G}_{-\infty}^{0}$-measurable function, while at the same time $\Pi_{t,k}\left(\mathbbm{1}_{T^{-s_k(t)}(\overline{[a_i]})}\Phi_\infty(\mathbbm{1}_{\overline{[b_i]}})\right)$ and $\mathbbm{1}_{T^{-s_k(t)}(\overline{[a_i]})}$ are $\mathcal{F}_{-\infty}^{\infty}\vee\mathcal{G}_{-\infty}^{0}$-measurable, since $\Pi_{t,k}$ conditions on a subalgebra of $\mathcal{F}_{-\infty}^{\infty}\vee\mathcal{G}_{-\infty}^{0}$ and $T^{-s_k(t)}(\overline{[a_i]})=T^{-s_k(t)}([a_i]\times\Lambda'^\ZZ)\in\mathcal{F}_{-\infty}^\infty$ is measurable in the $\sigma$-algebra generated by the first coordinate.

Hence, $\left\|\tilde{D}_{t,k}^{[a_i]\times[b_i]}\right\|_2=0$ for all $k$ and $D_{t,k}^{[a_i]\times[b_i]}\to 0$ a.s.
\end{proof}

To simplify notation, we may write $\varphi^k(t,\omega)=\mathbbm{1}_I(R_\alpha^k(t))T^k(\mathbbm{1}_{\overline{[c]}})(\iota(\omega))$ for any $\iota(\omega)$ in $\Omega$ such that $\pi(\iota(\omega))=\omega$.

\begin{corollary}\label{coro1}There is a full measure set $\Omega'\subseteq\textnormal{supp}\ \mu\subseteq \Omega$ such that for all $\omega\in\pi(\Omega')$, all $t\in[0,1)$ and all $\iota(\omega)\in\Omega'$, we have


\begin{align}\label{ecu2}&\frac{1}{N}\sum_{k=1}^N\left[f(S^k(t,\omega,\mu))-\varphi^k(t,\omega)T^{l_k(t)}\left(\mathbbm{1}_{\overline{[d]}}\prod_{i=1}^r\left(C_{t,k}^{[a_i]\times[b_i]}\right)\right)(\iota(\omega))\right]\xrightarrow{} 0
\end{align}
\end{corollary}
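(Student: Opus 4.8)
The plan is to first turn the left-hand side of (\ref{ecu2}) into an exact algebraic identity, and then to show that replacing each $F_{t,k}^{[a_i]\times[b_i]}$ by $C_{t,k}^{[a_i]\times[b_i]}$ costs only a Ces\`aro-negligible error. By Observation \ref{obseboba} we have $S^k(t,\omega,\mu)=(R_\alpha^k(t),T_{t,k}(\omega),\mu_{t,k}^\omega)$, and since $T_{t,k}(\omega)=(T^k(\omega^1),T^{l_k(t)}(\omega^2))$, evaluating the fixed $f$ of (\ref{efe}) gives, for any section $\iota(\omega)\in\pi^{-1}(\omega)$,
$$f(S^k(t,\omega,\mu))=\mathbbm{1}_I(R_\alpha^k(t))\,T^k(\mathbbm{1}_{\overline{[c]}})(\iota(\omega))\,T^{l_k(t)}(\mathbbm{1}_{\overline{[d]}})(\iota(\omega))\prod_{i=1}^r \mu_{t,k}^\omega([a_i]\times[b_i]).$$
Using (\ref{ecu}) to write $\mu_{t,k}^\omega([a_i]\times[b_i])=F_{t,k}^{[a_i]\times[b_i]}\circ T^{l_k(t)}(\iota(\omega))$ and grouping all factors carrying the shift $T^{l_k(t)}$ (the shift distributes over products), this becomes precisely
$$f(S^k(t,\omega,\mu))=\varphi^k(t,\omega)\,T^{l_k(t)}\Big(\mathbbm{1}_{\overline{[d]}}\prod_{i=1}^r F_{t,k}^{[a_i]\times[b_i]}\Big)(\iota(\omega)),$$
so the $k$-th bracket in (\ref{ecu2}) equals $\varphi^k(t,\omega)\,T^{l_k(t)}\big(\mathbbm{1}_{\overline{[d]}}[\prod_i F_{t,k}^{[a_i]\times[b_i]}-\prod_i C_{t,k}^{[a_i]\times[b_i]}]\big)(\iota(\omega))$.

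Next I would telescope the product difference: writing $F_{t,k}^{[a_i]\times[b_i]}=C_{t,k}^{[a_i]\times[b_i]}+D_{t,k}^{[a_i]\times[b_i]}$ and using multilinearity,
$$\prod_{i=1}^r F_{t,k}^{[a_i]\times[b_i]}-\prod_{i=1}^r C_{t,k}^{[a_i]\times[b_i]}=\sum_{j=1}^r\Big(\prod_{i<j}F_{t,k}^{[a_i]\times[b_i]}\Big)D_{t,k}^{[a_j]\times[b_j]}\Big(\prod_{i>j}C_{t,k}^{[a_i]\times[b_i]}\Big).$$
Since each $F_{t,k}^{\cdot}$ and $C_{t,k}^{\cdot}$ is a conditional probability and hence lies in $[0,1]$, and since $\varphi^k$ and $\mathbbm{1}_{\overline{[d]}}$ are bounded by $1$, the absolute value of the $k$-th bracket in (\ref{ecu2}) is at most $\sum_{j=1}^r T^{l_k(t)}\big(|D_{t,k}^{[a_j]\times[b_j]}|\big)(\iota(\omega))$. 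It therefore suffices to show that, for each of the finitely many $j$, the average $\frac1N\sum_{k=1}^N T^{l_k(t)}(|D_{t,k}^{[a_j]\times[b_j]}|)$ tends to $0$.

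The real work lies in the third step, producing a single full-measure set valid for all $t$ simultaneously. The obstacle is that $D_{t,k}^{[a_j]\times[b_j]}$ depends on $t$ through the conditioning window $\mathcal{F}_{-l_k(t)+1}^{s_k(t)}\vee\mathcal{G}_{-l_k(t)+1}^{0}$, while Lemma \ref{lema1} only yields $t$-by-$t$ convergence. The key observation is that for $t\in[0,1)$ one has $l_k(t)=\lfloor t+\alpha k\rfloor\in\{l_k(0),l_k(0)+1\}$, and correspondingly $s_k(t)\in\{s_k(0),s_k(0)-1\}$, so for each fixed $k$ the function $D_{t,k}^{[a_j]\times[b_j]}$ takes only one of two $t$-independent values. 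I would dominate $|D_{t,k}^{[a_j]\times[b_j]}|$ pointwise by the $t$-independent sequence $\bar D_k^{[a_j]\times[b_j]}$ defined as the sum of these two values; both converge a.s. to $0$ by the same argument as in Lemma \ref{lema1} (the filtration grows to $\mathcal{F}_{-\infty}^{\infty}\vee\mathcal{G}_{-\infty}^{0}$ along either branch), whence $\bar D_k^{[a_j]\times[b_j]}\to 0$ a.s. Applying Corollary \ref{coro2} to the uniformly bounded sequence $\bar D_k^{[a_j]\times[b_j]}\to 0$ (taking $I=[0,1)$) then furnishes a single full-measure $\Omega'$ on which $\frac1N\sum_k T^{l_k(t)}(\bar D_k^{[a_j]\times[b_j]})\to |I|\,\mathbb{E}[0]=0$ for all $t\in[0,1)$ at once; combining over the finitely many $j$ gives (\ref{ecu2}). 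The main difficulty is exactly this passage from pointwise-in-$t$ to uniform-in-$t$ convergence, which the two-value structure of $l_k(t)$ together with the already $t$-uniform conclusion of Corollary \ref{coro2} resolves.
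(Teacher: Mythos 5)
Your proposal is correct and follows essentially the same route as the paper's proof: the exact identity $f(S^k(t,\omega,\mu))=\varphi^k(t,\omega)\,T^{l_k(t)}\bigl(\mathbbm{1}_{\overline{[d]}}\prod_{i=1}^r F_{t,k}^{[a_i]\times[b_i]}\bigr)(\iota(\omega))$, the telescoping of the product difference with every factor (a conditional probability) bounded by $1$, and the reduction to showing $\frac{1}{N}\sum_{k=1}^N T^{l_k(t)}\bigl(|D_{t,k}^{[a_j]\times[b_j]}|\bigr)(\iota(\omega))\to 0$ for each of the finitely many $j$. The one place you genuinely add something is your third step. The paper finishes by simply citing Corollary \ref{coro2} together with Lemma \ref{lema1}, but Corollary \ref{coro2} is stated for a $t$-independent sequence $F_k\to F$, whereas $D_{t,k}^{[a_j]\times[b_j]}$ depends on $t$ through the conditioning window $(s_k(t),l_k(t))$; a naive application would therefore only produce a full-measure set depending on $t$, which is not what the statement of the corollary claims. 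Your observation that $l_k(t)\in\{l_k(0),l_k(0)+1\}$ for $t\in[0,1)$, so that for each $k$ the function $D_{t,k}^{[a_j]\times[b_j]}$ takes at most two $t$-independent values, together with the domination by $\bar D_k\to 0$ (both branches converge a.s.\ to $0$ since along either branch the windows increase monotonically to $\mathcal{F}_{-\infty}^{\infty}\vee\mathcal{G}_{-\infty}^{0}$, so the argument of Lemma \ref{lema1} applies verbatim), is exactly the argument needed to legitimize the paper's citation and obtain a single full-measure set valid for all $t$ simultaneously. In this respect your write-up is more complete than the paper's own one-line conclusion; the only detail you gloss over, which the paper does address, is that the resulting quantity depends only on $\pi(\iota(\omega))$, so the full-measure set can be saturated to $\pi^{-1}(\pi(\Omega'))$ --- but for the statement as quantified (over $\iota(\omega)\in\Omega'$) this is immaterial.
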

\begin{proof}For any $\omega\in\text{supp}\mu$
$$f(S^k(t,\omega,\mu))=\mathbbm{1}_I(R_\alpha^k(t))T^k(\mathbbm{1}_{[c]})(\omega^1)T^{l_k(t)}(\mathbbm{1}_{[d]})(\omega^2)\prod_{i=1}^r \mu_{t,k}^{\omega}([a_i]\times[b_i])$$
so, recalling (\ref{ecu}) and Observation \ref{obseboba}, we have, 
\begin{align}
&\left|\frac{1}{N}\sum_{k=1}^N\left[f(S^k(t,\omega,\mu))-\varphi^k(t,\omega)T^{l_k(t)}\left(\mathbbm{1}_{\overline{[d]}}\prod_{i=1}^r\left(C_{t,k}^{[a_i]\times[b_i]}\right)\right)(\iota(\omega))\right]\right|\leq\nonumber\\
&\leq\frac{1}{N}\sum_{k=1}^N\sum_{j=1}^r\left|\varphi^k(t,\omega)T^{l_k(t)}(\mathbbm{1}_{[d]})(\omega^2)\prod_{i=1}^{j-1} \mu_{t,k}^{\omega}([a_i]\times[b_i])\right.\nonumber\\
&\left.\left[\mu_{t,k}^{\omega}([a_j]\times[b_j])- T^{l_k(t)}\left(C_{t,k}^{[a_j]\times[b_j]}\right)(\iota(\omega))\right]\prod_{i=j+1}^r  T^{l_k(t)}\left(C_{t,k}^{[a_i]\times[b_i]}\right)(\iota(\omega))\right|\leq\nonumber\\
&\leq\sum_{j=1}^r\frac{1}{N}\sum_{k=1}^N\left|\mu_{t,k}^{\omega}([a_j]\times[b_j])- T^{l_k(t)}\left(C_{t,k}^{[a_j]\times[b_j]}\right)(\iota(\omega))\right|=\nonumber\\
&\leq\sum_{j=1}^r\frac{1}{N}\sum_{k=1}^N\left| T^{l_k(t)}\left(F_{t,k}^{[a_j]\times[b_j]}-C_{t,k}^{[a_j]\times[b_j]}\right)(\iota(\omega))\right|=\nonumber\\
&=\sum_{j=1}^r\frac{1}{N}\sum_{k=1}^N\left| T^{l_k(t)}\left(D_{t,k}^{[a_j]\times[b_j]}\right)(\iota(\omega))\right|
\end{align}
By virtue of Corollary \ref{coro2} and Lemma \ref{lema1} there's a full measure $\Omega'\subseteq\text{supp}\ \mu\subseteq \Omega$ such that the last term converges to zero when $\iota(\omega)$ is in $\Omega'$, and since the last term depends only on $\pi(\iota(\omega))$, the same will be true for $\pi^{-1}(\pi(\Omega')) $, and any $\omega\in\pi(\Omega')$ satisfies (\ref{ecu2}).
\end{proof}

Now $C_{t,k}^{[a_i]\times[b_i]}=\Pi_{t,k}(\mathbbm{1}_{\overline{[b_i]}})\Pi_{t,k}\left(\mathbbm{1}_{T^{-s_k(t)}(\overline{[a_i]})} \right)$, so writing $f_{t,k}^i=\Pi_{t,k}(\mathbbm{1}_{\overline{[b_i]}})$ and $h_{t,k}^i=\Pi_{t,k}\left(\mathbbm{1}_{T^{-s_k(t)}(\overline{[a_i]})} \right)$ we then have

\begin{align}\label{eq2}
&\frac{1}{N}\sum_{k=1}^N\mathbbm{1}_I(R_\alpha^k(t))T^k(\mathbbm{1}_{[c]})(\omega^1)T^{l_k(t)}(\mathbbm{1}_{[d]})(\omega^2)\prod_{i=1}^r T^{l_k(t)}\left(C_{t,k}^{[a_i]\times[b_i]}\right) (\iota(\omega))=\nonumber\\
&=\frac{1}{N}\sum_{k=1}^N\varphi^k(t,\omega)T^{l_k(t)}\left(\mathbbm{1}_{\overline{[d]}}\prod_{i=1}^r(f_{t,k}^i)h_{t,k}^i\right)(\iota(\omega))
\end{align}

Write $f_i^*=\Pi_\infty(\mathbbm{1}_{\overline{[b_i]}})$. Then we have the following observation
\begin{observation}\label{obse2}There is a full measure $\Omega'\subseteq\text{supp }\mu\subseteq\Omega$ such that, for all $\omega\in\pi(\Omega')$, all $t\in[0,1)$ and all $\iota(\omega)\in\Omega'\cap \pi^{-1}(\{\omega\})$, the averages
\begin{equation}\label{eq3}\frac{1}{N}\sum_{k=1}^N\left[S^k(f)(t,\omega,\mu)-\varphi^k(t,\omega)T^{l_k(t)}\left(\mathbbm{1}_{\overline{[d]}}\prod_{i=1}^r(f_{i}^*)h_{t,k}^i\right)(\iota(\omega))\right]
\end{equation}
converge to 0.
\end{observation}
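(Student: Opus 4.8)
The plan is to derive the Observation from Corollary~\ref{coro1} and the identity (\ref{eq2}), which together already show that the Cesàro averages of $S^k(f)(t,\omega,\mu)$ agree, up to an error whose average tends to $0$, with those of $\varphi^k(t,\omega)\,T^{l_k(t)}(\mathbbm{1}_{\overline{[d]}}\prod_{i=1}^r f_{t,k}^i h_{t,k}^i)(\iota(\omega))$. Hence it suffices to prove that replacing each factor $f_{t,k}^i=\Pi_{t,k}(\mathbbm{1}_{\overline{[b_i]}})$ by its limit $f_i^*=\Pi_\infty(\mathbbm{1}_{\overline{[b_i]}})$ costs nothing in the limit, i.e. that
\[
\frac{1}{N}\sum_{k=1}^N \varphi^k(t,\omega)\,T^{l_k(t)}\!\left(\mathbbm{1}_{\overline{[d]}}\prod_{i=1}^r f_{t,k}^i h_{t,k}^i-\mathbbm{1}_{\overline{[d]}}\prod_{i=1}^r f_i^* h_{t,k}^i\right)(\iota(\omega))\to 0
\]
on a full measure set of $\iota(\omega)$ that does not depend on $t$.

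First I would bound the summand pointwise. The functions $f_{t,k}^i$, $f_i^*$, $h_{t,k}^i$ are conditional expectations of indicators and hence take values in $[0,1]$, as do $\mathbbm{1}_{\overline{[d]}}$ and $\varphi^k(t,\omega)$. Expanding the difference of the two products telescopically and using that the Koopman operator $T^{l_k(t)}$ distributes over products and preserves the bound by $1$, I obtain
\[
\left|\varphi^k(t,\omega)\,T^{l_k(t)}\!\left(\mathbbm{1}_{\overline{[d]}}\prod_{i=1}^r f_{t,k}^i h_{t,k}^i-\mathbbm{1}_{\overline{[d]}}\prod_{i=1}^r f_i^* h_{t,k}^i\right)\right|\le \sum_{j=1}^r T^{l_k(t)}\!\left(\left|f_{t,k}^j-f_j^*\right|\right),
\]
so the claim reduces to showing that $\frac{1}{N}\sum_{k=1}^N T^{l_k(t)}(|f_{t,k}^j-f_j^*|)(\iota(\omega))\to 0$ for each $j$, with a $t$-independent full measure set.

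The main obstacle is precisely this uniformity in $t$: the sequence $(|f_{t,k}^j-f_j^*|)_k$ depends on $t$ through $l_k(t)=\lfloor t+\alpha k\rfloor$ and $s_k(t)=k-l_k(t)$, while Corollary~\ref{coro2} applies to a single fixed sequence. I would resolve this by observing that, for fixed $k$, as $t$ ranges over $[0,1)$ the integer $l_k(t)$ takes only the two values $\lfloor\alpha k\rfloor$ and $\lfloor\alpha k\rfloor+1$. Writing $f_k^{j,(0)}$ and $f_k^{j,(1)}$ for the corresponding conditional expectations $\mathbb{E}[\mathbbm{1}_{\overline{[b_j]}}\mid \mathcal{F}_{-l+1}^{k-l}\vee\mathcal{G}_{-l+1}^0]$ with $l=\lfloor\alpha k\rfloor$ and $l=\lfloor\alpha k\rfloor+1$, respectively, the $t$-dependent sequence is dominated by the single, $t$-independent and uniformly bounded sequence $\tilde F_k^j:=|f_k^{j,(0)}-f_j^*|+|f_k^{j,(1)}-f_j^*|$, since $|f_{t,k}^j-f_j^*|\le \tilde F_k^j$ for every $t$. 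Since $\alpha\in(0,1)$, both $l_k(t)$ and $s_k(t)$ increase monotonically to infinity, so the two families of conditioning $\sigma$-algebras increase to $\mathcal{F}_{-\infty}^{\infty}\vee\mathcal{G}_{-\infty}^0$; by the martingale convergence theorem (Theorem~\ref{teomart}) both $f_k^{j,(0)}$ and $f_k^{j,(1)}$ converge a.s. to $f_j^*$, whence $\tilde F_k^j\to 0$ a.s.

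Finally, I would apply Corollary~\ref{coro2} with $I=[0,1)$ (so $\mathbbm{1}_I\equiv 1$ and $|I|=1$), fixed sequence $F_k=\tilde F_k^j$ and limit $F=0$. This provides a $t$-independent full measure set on which $\frac{1}{N}\sum_{k=1}^N T^{l_k(t)}(\tilde F_k^j)(\iota(\omega))\to 0$ simultaneously for all $t$, and by the domination above the same holds with $\tilde F_k^j$ replaced by $|f_{t,k}^j-f_j^*|$. Intersecting the resulting full measure sets over $j=1,\dots,r$ with the one supplied by Corollary~\ref{coro1}, and restricting to $\mathrm{supp}\,\mu$, yields the desired $\Omega'$ and completes the argument.
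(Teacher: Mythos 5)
Your proof is correct, and its overall route is the same as the paper's: reduce via Corollary~\ref{coro1} and \eqref{eq2} to comparing the two products, expand the difference telescopically into terms each containing a single factor $f_{t,k}^j-f_j^*$ with all remaining factors bounded by $1$, and then kill the averages $\frac{1}{N}\sum_{k=1}^N T^{l_k(t)}\bigl(|f_{t,k}^j-f_j^*|\bigr)(\iota(\omega))$ by combining martingale convergence with Corollary~\ref{coro2}. The genuine difference lies in how that last step is justified, and here your version is actually more careful than the paper's. The paper applies Corollary~\ref{coro2} directly to the sequence $F_k=|f_{t,k}^j-f_j^*|$, which depends on $t$ through $(l_k(t),s_k(t))$; but the corollary, as stated, takes a single $t$-independent sequence $(F_k)$ and only then produces an exceptional set valid simultaneously for all $t$. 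Read literally, the paper's argument yields a full measure set for each fixed $t$, which is weaker than what the Observation asserts. Your domination $|f_{t,k}^j-f_j^*|\le \tilde F_k^j$ --- exploiting that for fixed $k$ the integer $l_k(t)=\lfloor t+\alpha k\rfloor$ takes at most the two values $\lfloor\alpha k\rfloor$ and $\lfloor\alpha k\rfloor+1$, and that both associated sequences of $\sigma$-algebras are increasing filtrations (since $\alpha<1$, both $l$ and $k-l$ are nondecreasing in $k$), so Doob gives $\tilde F_k^j\to 0$ a.s. --- replaces the $t$-dependent sequence by a $t$-independent dominating one, to which Corollary~\ref{coro2} legitimately applies. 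This is precisely what delivers the statement as written, with $\Omega'$ independent of $t$; it patches an imprecision in the paper's own proof (and, since Theorem~\ref{teoconvergence} downstream only needs the per-$t$ version, nothing else in the paper is affected either way).
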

\begin{proof}We use multilinearity to decompose the average telescopically and get

\begin{align}\label{eculoca}
&\limsup_N\left|\frac{1}{N}\sum_{k=1}^N\left[S^k(f)(t,\omega,\mu)-\varphi^k(t,\omega)T^{l_k(t)}\left(\mathbbm{1}_{\overline{[d]}}\prod_{i=1}^r(f_{i}^*)h_{t,k}^i\right)(\iota(\omega))\right]\right|\leq\nonumber\\
&\limsup_N\left|\frac{1}{N}\sum_{k=1}^N\left[S^k(f)(t,\omega,\mu)-\varphi^k(t,\omega)T^{l_k(t)}\left(\mathbbm{1}_{\overline{[d]}}\prod_{i=1}^r(f_{t,k}^i)h_{t,k}^i\right)(\iota(\omega))\right]\right|+\nonumber\\
&\limsup_N\left|\sum_{j=1}^r\frac{1}{N}\sum_{k=1}^N\varphi^k(t,\omega)T^{l_k(t)}\left(\psi^{k,j}_t\right)(\iota(\omega))\right|
\end{align}
where $\psi^{k,j}_t=\mathbbm{1}_{\overline{[d]}}\left(\prod_{\substack{i=1}}^{j-1}f_{t,k}^ih_{t,k}^i\right)(f_{t,k}^j-f_j^*)h_{t,k}^j\left(\prod_{\substack{i=j+1}}^{r}f_{i}^*h_{t,k}^i\right)$. Since the first summand of (\ref{eculoca}) vanishes by (\ref{eq2}) and Corollary \ref{coro1}, we get that the whole of (\ref{eculoca}) is bounded by
$$\left|\sum_{j=1}^r\frac{1}{N}\sum_{k=1}^NT^{l_k(t)}|f_{t,k}^j-f_j^*|(\iota(\omega))\right|$$
To see that this average converges to 0 a.s. notice that $|f_{t,k}^i-f^*_i|\to 0$ a.s. by Doob's convergence theorem and then convergence for the average follows from Corollary \ref{coro2} for some full measure $\Omega''\subseteq\Omega$, and, by dependence of the last term only on $\pi(\iota(\omega))$, also for $\pi^{-1}(\pi(\Omega''))$. So  we deduce there is some $\Omega'\subseteq\text{supp}\ \mu\subseteq\Omega$ such that (\ref{eq3}) holds for all $\omega\in\pi(\Omega')$ and all $\iota(\omega)$.

\end{proof}

\subsection{Genericity}\label{secgenericity}
Notice that up to this point we have been working with quite general weak mixing measures, without invoking any Gibbs property. We explain the reason why we now need to do so. By this point our original averages have been reduced to averages of some shifted conditional expectations, where conditioning is on $\sigma$-algebras that are ``moving'', i.e., depend on $k$. The problem is that the conditional expectations $h^i_{t,k}$ are expectations of the characteristic function of a set that is not fixed (namely, $T^{-s_k(t)}(A_i)$), and if we try to shift it, then we do get conditional probability of a fixed set, but conditioning is on a nonmonotone sequence of $\sigma$-algebras (so that we can't use Doob's theorems). What we want to use, then, is some property that ensures that, even though the set is not fixed, $h_{t,k}^i$ will be uniformly close to the conditional probability with respect to some fixed, limit $\sigma$-algebra, which will be the whole past $\mathcal{F}_{-\infty}^0\vee\mathcal{G}_{-\infty}^0$. In other words, we want conditioning on the ``recent'' past to be uniformly close to conditioning on the whole past. This is achieved, of course, by Markov measures, which are a strict subset of Gibbs measures. Lemma \ref{lemagibs} could be read as saying that Gibbs measures are ``asymptotically and exponentially close'' to a Markov measure. In fact, we won't be using the exponential speed, it is only the uniformity of the convergence of the recent past to the whole past that will be essential, and the proof is valid for any measure that satisfies that uniformity. Since we do not know of any broader and widely used class of measures that satisfy that uniformity, results are stated for Gibbs measures only.

Finally, while the following theorem does present a closed expression for the limit of the averages when $f$ is in our dense algebra, it is evident that such an expression admits no natural generalization to any $f\in C(X)$ and thus the task of providing a closed expression for the generated CP-chain distribution $Q$ seems hopeless.

\begin{theorem}\label{teoconvergence}Suppose that $\mu$ is an invariant Gibbs measure for some H\"{o}lder potential supported on a topologically transitive subshift of finite type $Y\subseteq\Omega_+$. Let $f$ be as in (\ref{efe}). Then for each $t\in[0,1)$ there is a full measure set $\Omega'_+\subseteq \Omega_+$ such that the averages
$$\frac{1}{N}\sum_{k=1}^NS^k (f)(t,\omega,\mu)$$
converge for all $\omega\in\Omega'_+$ to $|I|\mathbb{E}[\tilde{f}]\mathbb{E}[\tilde{g}]$, where:
\begin{equation}\tilde{g}=\mathbbm{1}_{\overline{[c]}}\prod_{i=1}^r\mathbb{E}\left[\mathbbm{1}_{\overline{[a_i]}}\biggr\rvert\bigcap_{k=1}^\infty\mathcal{F}_{-\infty}^0\vee\mathcal{G}_{-\infty}^{-k}\right]\quad\text{ and }\quad\tilde{f}=\mathbbm{1}_{\overline{[d]}}\prod_{i=1}^r(f_{i}^*)\nonumber
\end{equation}

In particular, when $t=0$ $\mu$ generates a CP-chain distribution $Q$ on $X$.

\end{theorem}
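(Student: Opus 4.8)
The plan is to resume from the reduction in Observation~\ref{obse2}: modulo an almost-surely vanishing error (controlled by Lemma~\ref{lema1} and Corollary~\ref{coro2}), the average $\frac1N\sum_{k=1}^N S^k(f)(t,\omega,\mu)$ coincides with
$$\frac1N\sum_{k=1}^N\mathbbm{1}_I(R_\alpha^k(t))\,T^k(\mathbbm{1}_{\overline{[c]}})(\iota(\omega))\,T^{l_k(t)}\!\Big(\mathbbm{1}_{\overline{[d]}}\prod_{i=1}^r f_i^*\,h_{t,k}^i\Big)(\iota(\omega)),$$
in which only the factors $h_{t,k}^i$ still carry a dependence on $k$. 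The whole task is therefore to exhibit $T^{l_k(t)}\big(\prod_i h_{t,k}^i\big)$ as the $T^k$-shift of a fixed $L^\infty$ function and then close with the double ergodic average of Corollary~\ref{coroteo2}.

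First I would synchronise the two speeds. Because $h_{t,k}^i=\Pi_{t,k}\big(\mathbbm{1}_{T^{-s_k(t)}\overline{[a_i]}}\big)$ conditions on a cylinder that has itself been pushed forward by $s_k(t)=k-l_k(t)$, a direct application of the invariance relation (\ref{shiftedcond}) converts the $l_k(t)$-shift into a $k$-shift acting on the \emph{fixed} cylinder $\overline{[a_i]}$:
$$T^{l_k(t)}(h_{t,k}^i)=T^k\big(H_{t,k}^i\big),\qquad H_{t,k}^i:=\mathbb{E}\big[\mathbbm{1}_{\overline{[a_i]}}\mid\mathcal{F}_{-k+1}^0\vee\mathcal{G}_{-k+1}^{-s_k(t)}\big].$$
Hence $T^{l_k(t)}\big(\prod_i h_{t,k}^i\big)=T^k\big(\prod_i H_{t,k}^i\big)$, and the factor $\tilde g=\mathbbm{1}_{\overline{[c]}}\prod_i H_i$ of the statement will appear once I establish the almost-sure convergence $H_{t,k}^i\to H_i:=\mathbb{E}\big[\mathbbm{1}_{\overline{[a_i]}}\mid\bigcap_m\mathcal{F}_{-\infty}^0\vee\mathcal{G}_{-\infty}^{-m}\big]$.

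This last convergence is the crux, and the step I expect to be the main obstacle. The conditioning algebras $\mathcal{B}_k:=\mathcal{F}_{-k+1}^0\vee\mathcal{G}_{-k+1}^{-s_k(t)}$ are \emph{not} monotone: each time $l_{k+1}(t)=l_k(t)$ the second-coordinate window $[-k+1,-s_k(t)]$ slides rather than grows, so no martingale theorem can be applied to the sequence $\{\mathcal{B}_k\}$ directly. I would therefore factor the convergence through the horizon algebras $\mathcal{F}_{-\infty}^0\vee\mathcal{G}_{-\infty}^{-s_k(t)}$. Passing from $\mathcal{B}_k$ to this full past is the non-monotone, genuinely hard part: here I would invoke Lemma~\ref{lemagibs}, whose uniform exponential memory bound $\gamma_m<C\rho^m$ forces the conditional probability given a finite recent window to be close to that given the whole past, uniformly in the window's position and in the point, the effective depth being $\sim l_k(t)\to\infty$. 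It is precisely this uniformity---unavailable from martingale theory for non-monotone filtrations, and the reason the Gibbs hypothesis (rather than mere weak mixing) is needed---that makes the step go through. The remaining limit, $\mathbb{E}[\mathbbm{1}_{\overline{[a_i]}}\mid\mathcal{F}_{-\infty}^0\vee\mathcal{G}_{-\infty}^{-s_k(t)}]\to H_i$ as $s_k(t)\to\infty$, is now monotone (the algebras decrease) and follows from Doob's theorem in the form of Theorem~\ref{teomart}.

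With $H_{t,k}^i\to H_i$ almost surely secured, I would replace $\prod_i H_{t,k}^i$ by $\prod_i H_i$ inside the $T^k$-shift; the error is dominated by $\frac1N\sum_k T^k\big(\big|\prod_i H_{t,k}^i-\prod_i H_i\big|\big)$, which vanishes by Maker's generalised ergodic theorem (the $T^k$-counterpart of Corollary~\ref{coro2} applied to the uniformly bounded, null-convergent sequence $\big|\prod_i H_{t,k}^i-\prod_i H_i\big|$). The averages then assume the bilinear form
$$\frac1N\sum_{k=1}^N\mathbbm{1}_I(R_\alpha^k(t))\,T^{l_k(t)}(\tilde f)(\iota(\omega))\,T^k(\tilde g)(\iota(\omega)),\qquad \tilde f=\mathbbm{1}_{\overline{[d]}}\prod_i f_i^*,\quad \tilde g=\mathbbm{1}_{\overline{[c]}}\prod_i H_i,$$
with $\tilde f,\tilde g\in L^\infty$ now independent of $k$. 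Since a Gibbs system is exact (Lemma~\ref{lemagibbsexact}), Corollary~\ref{coroteo2} applies and yields convergence to $|I|\,\mathbb{E}[\tilde f]\,\mathbb{E}[\tilde g]$ on a full-measure set $\Omega_+'$, as claimed. Finally, taking $t=0$ produces the limit $c_f$ of (\ref{genericity}) for every $f\in\mathcal{S}$, hence by linearity and the density of $\mathcal{A}$ in $C(X)$ for every $f\in C(X)$; the Riesz-representation argument recorded before Theorem~\ref{genericity2} then yields the $S$-invariant distribution $Q$ with $c_f=\mathbb{E}_Q[f]$, so that $\mu$ generates the CP-chain $Q$.
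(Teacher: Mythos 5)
Your proposal is correct and is essentially the paper's own proof: the same chain of reductions (starting from Observation \ref{obse2}, synchronizing the two speeds via invariance, comparing finite-window conditioning with full-past conditioning via Lemma \ref{lemagibs}, applying Doob's theorem (Theorem \ref{teomart}) to the now-decreasing past algebras, a Maker-type replacement as in Corollary \ref{coro2}, and closing with Corollary \ref{coroteo2} for the double average), the only cosmetic difference being that you synchronize speeds before, rather than after, the Gibbs comparison. The one step you state but do not execute --- deducing from Lemma \ref{lemagibs} that conditioning on $\mathcal{F}_{-k+1}^0\vee\mathcal{G}_{-k+1}^{-s_k(t)}$ is uniformly close to conditioning on $\mathcal{F}_{-\infty}^0\vee\mathcal{G}_{-\infty}^{-s_k(t)}$ --- is precisely where the paper inserts the conditioning-out identity (\ref{condout}) and a Rokhlin disintegration over the atoms of the window algebra, since the lemma's oscillation bound applies only to events conditioned on the full joint past; this is implementation detail rather than a different idea, and your outline correctly identifies both the step and the tool.
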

\begin{proof}It suffices to prove convergence of the averages

\begin{equation}\label{eqav}
\frac{1}{N}\sum_{k=1}^N\left[\mathbbm{1}_I(R_\alpha^k(t))T^k(\mathbbm{1}_{\overline{[c]}})T^{l_k(t)}\left(\mathbbm{1}_{\overline{[d]}}\prod_{i=1}^r(f_{i}^*)h_{t,k}^i\right)(\omega')\right]
\end{equation}
for a.e. $\omega'\in\hat{Y}$ ($\hat{Y}$ is the natural extension of $Y$\footnote{The natural extension of $Y$ is the support of the natural extension of the Gibbs measure or, equivalently, the bilateral subshift defined by the same local rules that define $Y$.}), since then we would have, by Observation \ref{obse2}, a full measure $\Omega'\subseteq\Omega$ such that $\frac{1}{N}\sum_{k=1}^NS^k (f)(t,\omega,\mu)$ converges for all $\omega\in\pi(\Omega')$. And since by definition of the natural extension of $\mu$ from $\Omega_+$ to $\Omega$ we have $\mu(A)=\mu(\pi^{-1}(A))$  for any measurable $A\subseteq\Omega_+$, then $$\mu(\pi(\Omega'))=\mu(\pi^{-1}(\pi(\Omega')))\geq\mu(\Omega')=1$$
and we may define $\Omega_+'=\pi(\Omega')$.

So let us write $\tilde{g}_{t,k}^i=\mathbb{E}[\mathbbm{1}_{\overline{[a_i]}}\mid\ \mathcal{F}_{-\infty}^{0}\vee\mathcal{G}_{-\infty}^{-s_k(t)}]$, $\tilde{h}_{t,k}^i= T^{s_k(t)}(\tilde{g}_{t,k}^i)$ and
$$z_{t,k}^i(\omega)=\mu(T^{-s_k(t)}([a_i])\cap([\omega_1^1\dots\omega_{s_k(t)}^1]\times\Lambda'^{\ZZ})\mid \mathcal{F}_{-\infty}^{0}\vee\mathcal{G}_{-\infty}^{0})(\omega)$$
$$z_{t,k}(\omega)=\mu([\omega_1^1\dots\omega_{s_k(t)}^1]\times\Lambda'^{\ZZ}\mid \mathcal{F}_{-\infty}^{0}\vee\mathcal{G}_{-\infty}^{0})(\omega)$$

We will use the following fact. For any finite measurable partition $\alpha$ write, by abuse of notation, $\alpha$ for the $\sigma$-algebra generated, and also $\alpha(\omega)$ for the unique set in $\alpha$ to which $\omega$ belongs. Then, for any $\sigma$-algebra $\mathcal{G}$ we have

\begin{equation}\label{condout}
\frac{\mathbb{E}[f\mathbbm{1}_{\alpha(\omega)}\mid\mathcal{G}](\omega)}{\mathbb{E}[\mathbbm{1}_{\alpha(\omega)}\mid\mathcal{G}](\omega)}=\mathbbm{E}[f\mid\alpha\vee\mathcal{G}]\footnotemark
\end{equation}
\footnotetext{The proof of this simple fact is as follows. It suffices to consider a measurable set $A$, a $\mathcal{G}$-measurable set $G$ and an $\alpha$-measurable set $B$. Then
\begin{align}\int\mathbbm{1}_A\mathbbm{1}_B\mathbbm{1}_Gd\mu&=\int\frac{\mu(B\mid\mathcal{G})}{\mu(B\mid\mathcal{G})}\mathbb{E}[\mathbbm{1}_A\mathbbm{1}_B\mathbbm{1}_G\mid\mathcal{G}]d\mu=\int \mu(B\mid\mathcal{G})\mathbb{E}\left[\frac{\mathbbm{1}_A\mathbbm{1}_G\mathbbm{1}_B} {\mu(B\mid\mathcal{G})}\bigg\rvert\mathcal{G}\right]d\mu\nonumber\\
&=\int\mathbbm{1}_B\frac{\mathbb{E}[\mathbbm{1}_A\mathbbm{1}_G\mathbbm{1}_B\mid\mathcal{G}]}{\mu(B\mid\mathcal{G})}=\int \mathbbm{1}_B \mathbbm{1}_G\frac{\mu(A\cap \alpha(\cdot) \mid\mathcal{G})}{\mu(\alpha(\cdot)\mid\mathcal{G})}d\mu\nonumber
\end{align}}
Using this (for $\mathcal{G}=\mathcal{F}_{-\infty}^{0}\vee\mathcal{G}_{-\infty}^{0}$ and $\alpha=\mathcal{F}_1^{s_k(t)}$) and (\ref{shiftedcond}), we see that

$$\tilde{h}_{t,k}^i(\omega)=T^{s_k(t)}(\tilde{g}_{t,k}^i)=\mathbb{E}[T^{s_k(t)}(\mathbbm{1}_{\overline{[a_i]}})\mid \mathcal{F}_{-\infty}^{s_k(t)}\vee\mathcal{G}_{-\infty}^{0}](\omega)=\frac{z_{t,k}^i(\omega)}{z_{t,k}(\omega)}$$
and, by Lemma \ref{lemagibs}, when $\omega,\omega^*\in \hat{Y}$ are such that $\omega^*\in B_{t,k}(\omega)$, where
$$B_{t,k}(\omega)=\{\omega'\in\hat{Y}: \forall i,0\leq i\leq s_k(t),\omega_i^1=\omega'^{1}_i;\forall i,-l_k(t)+1\leq i\leq 0,\omega_i=\omega'_i\}$$
we have that
$$|z_{t,k}^i(\omega)-z_{t,k}^i(\omega^*)|\leq\gamma_{l_k(t)-1}z_{t,k}^i(\omega^*)$$
and
$$|z_{t,k}(\omega)-z_{t,k}(\omega^*)|\leq\gamma_{l_k(t)-1}z_{t,k}(\omega^*)$$
and therefore
\begin{align}|\tilde{h}_{t,k}^i(\omega)-\tilde{h}_{t,k}^i(\omega^*)|&=\left|\frac{z_{t,k}^i(\omega)}{z_{t,k}(\omega)}-\frac{z_{t,k}^i(\omega^*)}{z_{t,k}(\omega^*)}\right|\nonumber\\
&= \left|\frac{z_{t,k}^i(\omega)(z_{t,k}(\omega^*)-z_{t,k}(\omega))-z_{t,k}(\omega)(z_{t,k}^i(\omega^*)-z_{t,k}^i(\omega)}{z_{t,k}(\omega)z_{t,k}(\omega^*)}\right|\nonumber\\
&\leq 2\gamma_{l_k(t)-1}
\end{align}

Recall that $h^i_{t,k}(\omega)=\mathbb{E}[T^{s_k(t)}(\mathbbm{1}_{[a_i]})\mid \mathcal{F}^{s_k(t)}_{-l_k(t)+1}\vee\mathcal{G}_{-l_k(t)+1}^{0}]$. Then, using Rokhlin's disintegration (see \cite[Theorem 5.8]{Furst81}) against the $\sigma$-algebra $ \mathcal{H}_{t,k}:=\mathcal{F}^{s_k(t)}_{-\infty}\vee\mathcal{G}_{-\infty}^{0}$ we get
\begin{align}
h^i_{t,k}(\omega)&=\frac{\mu(T^{-s_k(t)}(\overline{[a_i]})\cap B_{t,k}(\omega))}{\mu(B_{t,k}(\omega))}=\nonumber\\
&=\int_{} \frac{\mu(T^{-s_k(t)}(\overline{[a_i]})\cap B_{t,k}(\omega)\mid\mathcal{H}_{t,k})(\omega^*)}{\mu(B_{t,k}(\omega))}d\mu(\omega^*)
\end{align}

and then, since $B_{t,k}(\omega)$ is $\mathcal{H}_{t,k}$-measurable:

\begin{align}|\tilde{h}_{t,k}^i(\omega)-h_{t,k}^i(\omega)|&=\left|\tilde{h}_{t,k}^i(\omega)-\int_{B_{t,k}(\omega)} \frac{\mu(T^{-s_k(t)}(A)\mid\mathcal{H}_{t,k})(\omega^*)}{\mu(B_{t,k}(\omega))}d\mu(\omega^*)\right| =\nonumber\\
&=\left|\int_{B_{t,k}(\omega)}\frac{\tilde{h}_{t,k}^i(\omega)-\tilde{h}^i_{t,k}(\omega^*)}{\mu(B_{t,k}(\omega))}d\mu(\omega^*)\right|\leq 2\gamma_{l_k(t)-1}\nonumber
\end{align}
Since $\gamma_{l_k(t)}$ goes to zero, we have $|\tilde{h}_{t,k}^i(\omega)-h_{t,k}^i(\omega)|\to 0$ a.s.

We now claim that convergence of (\ref{eqav}) will follow from convergence of 

\begin{align}\label{ecuacioncita}&\frac{1}{N}\sum_{k=1}^N\left[\mathbbm{1}_I(R_\alpha^k(t))T^{k}(\mathbbm{1}_{\overline{[c]}})(\omega')\ T^{l_k(t)}\left(\tilde{f}\prod_{i=1}^r\tilde{h}_{t,k}^i\right)(\omega')\right]=\nonumber\\
&\frac{1}{N}\sum_{k=1}^N\left[\mathbbm{1}_I(R_\alpha^k(t))T^{l_k(t)}(\tilde{f})(\omega')\ T^{k}\left(\mathbbm{1}_{\overline{[c]}}\prod_{i=1}^r\tilde{g}_{t,k}^i\right)(\omega')\right]
\end{align}

Indeed, to see this claim is true we just use multilinearity as in the proof of Observation \ref{obse2}

\begin{align}\label{eculoca3}
&\limsup_N\left|\frac{1}{N}\sum_{k=1}^N\left[\mathbbm{1}_I(R_\alpha^k(t))T^k(\mathbbm{1}_{\overline{[c]}})T^{l_k(t)}\left(\mathbbm{1}_{\overline{[d]}}\prod_{i=1}^r(f_{i}^*)h_{t,k}^i-\tilde{f}\prod_{i=1}^r\tilde{h}_{t,k}^i\right)\right]\right|=\nonumber\\
&=\limsup_N\left|\sum_{j=1}^r\frac{1}{N}\sum_{k=1}^N\mathbbm{1}_I(R_\alpha^k(t))T^k(\mathbbm{1}_{\overline{[c]}})T^{l_k(t)}\psi^{j}_{t,k}\right|
\end{align}
where
$$\psi^{j}_{t,k}=\mathbbm{1}_{\overline{[d]}}\left(\prod_{i=1}^{j-1}f_{i}^*h_{t,k}^i\right)f_j^*(h_{t,k}^j-\tilde{h}_{t,k}^j)\left(\prod_{i=j+1}^rf_{i}^*\tilde{h}_{t,k}^i\right)$$

Then the fact that $|h_{t,k}^j-\tilde{h}_{t,k}^j|\to 0$ a.s. as $k\to\infty$ implies that $\psi^{k,j}_t\to 0$ a.s. as $k\to\infty$ and convergence of (\ref{eculoca3}) follows from Corollary \ref{coro2}.

Now, to prove convergence of (\ref{ecuacioncita}) notice that by Doob's theorem $\tilde{g}_{t,k}^i\to g_i=\mathbb{E}[\mathbbm{1}_{\overline{[a_i]}}\mid\mathcal{F}_\mathcal{G}]$ for a.e. $\omega'$, where $\mathcal{F}_{\mathcal{G}}=\bigcap_{k=1}^\infty \mathcal{F}_{-\infty}^0\vee\mathcal{G}_{-\infty}^{-k}$. We then obtain, using once more multilinearity and Corollary \ref{coro2} as in the proof of Observation \ref{obse2}, that the averages
\begin{align}
&\frac{1}{N}\sum_{k=1}^N\left[\mathbbm{1}_I(R_\alpha^k(t))T^{l_k(t)}(\tilde{f})(\omega')\left[T^k\left(\mathbbm{1}_{\overline{[c]}}\prod_{i=1}^r\tilde{g}^i_{t,k}\right)-T^k\left(\mathbbm{1}_{\overline{[c]}}\prod_{i=1}^rg_i\right)\right](\omega')\right]\nonumber
\end{align}
converge a.s. to 0.

Finally, from Corollary \ref{coroteo2} and the fact (Lemma \ref{lemagibbsexact}) that Gibbs invariant measures for H\"{o}lder potentials on topologically transitive subshifts of finite type are exact (and their natural extensions are K systems), we have, for each $t\in[0,1)$, a full $\mu$-measure set $\Omega'\subseteq \hat{Y}$ such that for all $\omega'\in\Omega'$,

$$\frac{1}{N}\sum_{k=1}^N\left[\mathbbm{1}_I(R_\alpha^k(t))T^{l_k(t)}(\tilde{f})(\omega')\ T^k\left(\mathbbm{1}_{\overline{[c]}}\prod_{i=1}^rg_i\right)(\omega')\right]\to|I|\mathbb{E}[\tilde{g}]\mathbb{E}[\tilde{f}]$$
\end{proof}
\subsection{Ergodicity}\label{secergodicity}

Let $Q$ be the CP-chain distribution generated by $\mu$ according to Theorem \ref{teoconvergence}. In this section we will show that $Q$ is ergodic for the transformation $S$.

Let $\tilde{Q}$ be the $\Xi$ component of $Q$ and $\tilde{\mathcal{S}}$ be the functions in $\mathcal{S}$ not depending on $t$ (equivalently, $I=\mathbb{T}$ in the definition of $\mathcal{S}$). For $(\omega,\nu)\in\Xi$ write $$\tilde{S}_{t,k}(\omega,\nu)=(T_{t,k}(\omega),S_{t,k}(\omega,\nu))$$
\begin{observation}\label{obsemix}$Q=\lambda\times\tilde{Q}$, where $\lambda$ is Lebesgue measure.
\end{observation}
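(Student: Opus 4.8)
The plan is to read the product structure directly off the closed-form limit supplied by Theorem \ref{teoconvergence}, exploiting the fact that the entire dependence on the rotation coordinate $t$ has collapsed into a single scalar factor $|I|$.

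First I would recall that $Q$ is characterized by $\mathbb{E}_Q[f]=c_f$, the $\mu$-a.e.\ limit of $\frac{1}{N}\sum_{k=1}^N S^k(f)(0,\omega,\mu)$, and that the generators $f\in\mathcal{S}$ span an algebra $\mathcal{A}$ dense in $C(X)$. By the Riesz-representation/uniqueness argument already used in Subsection \ref{subsecmeasurescp} to define $Q$, it therefore suffices to verify the identity $\mathbb{E}_Q[f]=\mathbb{E}_{\lambda\times\tilde Q}[f]$ for every $f\in\mathcal{S}$.

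The key step is to observe that each such $f$ factors as $f(t,\omega,\nu)=\mathbbm{1}_I(t)\,g(\omega,\nu)$, where $g(\omega,\nu)=\mathbbm{1}_{[c]}(\omega^1)\mathbbm{1}_{[d]}(\omega^2)\prod_{i=1}^r\phi_{[a_i]\times[b_i]}(\nu)$ is a function of the $\Xi$-coordinate alone, and that Theorem \ref{teoconvergence} gives $\mathbb{E}_Q[f]=|I|\,\mathbb{E}[\tilde f]\,\mathbb{E}[\tilde g]$, in which the factor $\mathbb{E}[\tilde f]\,\mathbb{E}[\tilde g]$ depends only on $g$. Specializing to $I=\mathbb{T}$ (so that $|I|=1$) and using that $\tilde Q$ is by definition the $\Xi$-marginal of $Q$ identifies this factor as $\int_\Xi g\,d\tilde Q=\mathbb{E}_Q[g]$. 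Putting the pieces together, for arbitrary $I\in\mathcal{I}$,
\[
\mathbb{E}_Q[f]=|I|\int_\Xi g\,d\tilde Q=\left(\int_\mathbb{T}\mathbbm{1}_I\,d\lambda\right)\left(\int_\Xi g\,d\tilde Q\right)=\int_X \mathbbm{1}_I(t)\,g(\omega,\nu)\,d(\lambda\times\tilde Q),
\]
which is exactly $\mathbb{E}_{\lambda\times\tilde Q}[f]$.

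I do not expect a genuine obstacle here: the product structure is already encoded in the factored form of the limit, the rotation coordinate having equidistributed (by the irrationality of $\alpha$ that underlies all of Section \ref{secergodictheorems}) so as to contribute only its Lebesgue average $|I|=\lambda(I)$. The only point deserving care is the closing density step, namely that agreement on the generators $\mathcal{S}$ propagates by linearity to $\mathcal{A}$ and thence, by the Stone--Weierstrass-type density of $\mathcal{A}$ in $C(X)$ together with Riesz representation, pins down the measure uniquely; but this is precisely the argument already invoked earlier, so it transfers without change.
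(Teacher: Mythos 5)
Your proposal is correct and follows essentially the same route as the paper: factor each generator $f\in\mathcal{S}$ as $\mathbbm{1}_I\cdot f'$ with $f'\in\tilde{\mathcal{S}}$, read off $\mathbb{E}_Q[f]=|I|\,\mathbb{E}[\tilde f]\,\mathbb{E}[\tilde g]$ from the explicit limit in Theorem \ref{teoconvergence}, identify the $t$-independent factor with $\tilde{Q}(f')$ via the marginal property, and extend by density. The paper compresses all of this into one sentence, but the content—including your careful $I=\mathbb{T}$ specialization to pin down the $\Xi$-factor—is exactly what that sentence leaves implicit.
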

\begin{proof}Any $f\in\mathcal{S}$ can be written $f=\mathbbm{1}_If'$ where $f'\in\tilde{\mathcal{S}}$ and $I\in\mathcal{I}$ and by the explicit limit in Theorem \ref{teoconvergence} it follows that $Q(f)=\lambda(I)\tilde{Q}(f')$
\end{proof}

We will also write, for measurable $f:\Xi\to\RR$
$$\tilde{S}_{t,h}(f)=f\circ\tilde{S}_{t,h}$$
and we will also abuse notation and write $S^k(f)$ with $f$ regarded as a function on $\mathbb{T}\times\Xi$ rather than $\Xi$.

\begin{lemma}\label{lema12}Let $a\in\Lambda^*$, $b\in\Lambda'^*$ and $f(\omega,\nu)=\nu([a]\times[b])=\phi_{[a]\times[b]}(\nu)$. Then
$$S^k(\tilde{S}_{s,h}(f))(t,\omega,\mu)=T^k\left(C_{(s,h),(t,k)}^{[a]\times[b]}+D_{(s,h),(t,k)}^{[a]\times[b]}\right)(\omega')$$
where $\omega'$ is any element of $\pi^{-1}(\{\omega\})$, $C_{(s,h),(t,k)}^{[a]\times[b]}$ is the function given by
$$\mathbb{E}\left[\mathbbm{1}_{T^{s_k(t)-l_h(s)}(\overline{[b]})}\mid \mathcal{F}_{-k+1}^{h}  \vee\mathcal{G}_{-k+1}^{-s_k(t)+l_h(s)}\right] \mathbb{E}\left[\mathbbm{1}_{T^{-h}(\overline{[a]})}\mid \mathcal{F}_{-k+1}^{h}  \vee\mathcal{G}_{-k+1}^{-s_k(t)+l_h(s)}\right]$$
and $$D_{(s,h),(t,k)}^{[a]\times[b]}\to 0\quad\text{a.s. in }\Omega$$
\end{lemma}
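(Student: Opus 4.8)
The plan is to reduce the left-hand side to a single conditional expectation, split it exactly as in (\ref{ecusplit}), and then show that the remainder vanishes using the Gibbs property. First I would evaluate the left-hand side directly. Since $\tilde S_{s,h}(f)(\omega,\nu)=S_{s,h}(\omega,\nu)([a]\times[b])$ depends only on the measure coordinate, Observation \ref{obseboba} gives $S^k(\tilde S_{s,h}(f))(t,\omega,\mu)=S_{s,h}(T_{t,k}(\omega),\mu_{t,k}^\omega)([a]\times[b])$, and Observation \ref{obsemedidas} turns this into
$$\mathbb{E}\left[T^{k+h}(\mathbbm{1}_{\overline{[a]}})\,T^{l_k(t)+l_h(s)}(\mathbbm{1}_{\overline{[b]}})\mid\mathcal{F}_1^{k+h}\vee\mathcal{G}_1^{l_k(t)+l_h(s)}\right].$$
Applying (\ref{shiftedcond}) and using $l_k(t)-k=-s_k(t)$, the integrand becomes $\Psi'\circ T^k$ with $\Psi'=\mathbbm{1}_{T^{-h}(\overline{[a]})}\mathbbm{1}_{T^{s_k(t)-l_h(s)}(\overline{[b]})}$, while the conditioning transforms into $\mathcal{H}:=\mathcal{F}_{-k+1}^h\vee\mathcal{G}_{-k+1}^{-s_k(t)+l_h(s)}$, so the whole expression equals $T^k(F)(\omega')$ with $F=\mathbb{E}[\Psi'\mid\mathcal{H}]$. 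Writing $\Pi=\mathbb{E}[\,\cdot\mid\mathcal{H}]$, $\Phi=\mathrm{id}-\Pi$ and decomposing $\mathbbm{1}_{T^{-h}(\overline{[a]})}=\Pi(\mathbbm{1}_{T^{-h}(\overline{[a]})})+\Phi(\mathbbm{1}_{T^{-h}(\overline{[a]})})$, I pull the $\mathcal{H}$-measurable part out of the conditional expectation: the first term is exactly $C_{(s,h),(t,k)}^{[a]\times[b]}$, and $D_{(s,h),(t,k)}^{[a]\times[b]}=F-C_{(s,h),(t,k)}^{[a]\times[b]}$ is the conditional covariance $\mathbb{E}[XY_k\mid\mathcal{H}]-\mathbb{E}[X\mid\mathcal{H}]\mathbb{E}[Y_k\mid\mathcal{H}]$, where $X=\mathbbm{1}_{T^{-h}(\overline{[a]})}$ and $Y_k=\mathbbm{1}_{T^{s_k(t)-l_h(s)}(\overline{[b]})}$.

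The point where this departs from Lemma \ref{lema1} is that $\mathcal{H}$ is \emph{not} monotone in $k$: the window $\mathcal{G}_{-k+1}^{-s_k(t)+l_h(s)}$ slides off to $-\infty$, so neither Doob's theorem nor the orthogonality computation of Lemma \ref{lema1} applies to it directly. This is where the Gibbs hypothesis enters. Exactly as in the estimate $|h_{t,k}^i-\tilde h_{t,k}^i|\to 0$ inside the proof of Theorem \ref{teoconvergence}, I would use Lemma \ref{lemagibs} to replace the finite window $\mathcal{H}$ by the whole past $\mathcal{H}^w:=\mathcal{F}_{-\infty}^h\vee\mathcal{G}_{-\infty}^{-s_k(t)+l_h(s)}$ in each of the three conditional expectations, the error tending to $0$ uniformly in $\omega$ as the window deepens. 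This reduces the task to showing that the conditional covariance of $X$ and $Y_k$ given $\mathcal{H}^w$ tends to $0$ almost surely.

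To finish I would reveal $Y_k$. As $Y_k$ is $\mathcal{G}$-measurable, $\mathcal{H}^w\vee\sigma(Y_k)=\mathcal{F}_{-\infty}^h\vee\mathcal{G}_{-\infty}^{-s_k(t)+l_h(s)+|b|}$, and the conditional covariance is bounded in modulus by $\mathbb{E}[|\epsilon_k|\mid\mathcal{H}^w]$, where $\epsilon_k=\mathbb{E}[X\mid\mathcal{F}_{-\infty}^h\vee\mathcal{G}_{-\infty}^{-s_k(t)+l_h(s)+|b|}]-\mathbb{E}[X\mid\mathcal{F}_{-\infty}^h\vee\mathcal{G}_{-\infty}^{-s_k(t)+l_h(s)}]$. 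Both terms defining $\epsilon_k$ are values of the reverse martingale $v\mapsto\mathbb{E}[X\mid\mathcal{F}_{-\infty}^h\vee\mathcal{G}_{-\infty}^{v}]$ at indices tending to $-\infty$, so by Lévy's downward theorem they converge a.s. to a common limit and $\epsilon_k\to 0$ a.s. Since $s_k(t)$ is non-decreasing in $k$, the index $-s_k(t)+l_h(s)$ is non-increasing, so the algebras $\mathcal{H}^w$ themselves form a decreasing sequence; Theorem \ref{teomart} in its decreasing-filtration form then yields $\mathbb{E}[|\epsilon_k|\mid\mathcal{H}^w]\to 0$ a.s., whence $D_{(s,h),(t,k)}^{[a]\times[b]}\to 0$ a.s.

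The hard part will be the uniform passage from $\mathcal{H}$ to the whole past $\mathcal{H}^w$: this is precisely where the sliding, non-monotone $\mathcal{G}$-window has to be tamed, and it requires carrying out, with the mixed upper indices $h$ and $-s_k(t)+l_h(s)$ in place of a single cut at time $0$, the same Gibbs bookkeeping (the $z_{t,k}$-type estimates) used in Theorem \ref{teoconvergence}, rather than merely invoking Doob's theorem or an orthogonality identity as in Lemma \ref{lema1}.
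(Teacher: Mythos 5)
Your proposal agrees with the paper's proof only in its algebraic half, and then takes a genuinely different (and more careful) route for the convergence claim. The computation of the left-hand side via Observations \ref{obseboba} and \ref{obsemedidas} together with (\ref{shiftedcond}), and the identification of $D_{(s,h),(t,k)}^{[a]\times[b]}$ as the conditional covariance of $X=\mathbbm{1}_{T^{-h}(\overline{[a]})}$ and $Y_k=\mathbbm{1}_{T^{s_k(t)-l_h(s)}(\overline{[b]})}$ given $\mathcal{H}_k=\mathcal{F}_{-k+1}^{h}\vee\mathcal{G}_{-k+1}^{-s_k(t)+l_h(s)}$, are exactly as in the paper (which of the two factors one ``splits'' is immaterial, since $D=F-C$ either way). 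For $D\to 0$ a.s., however, the paper merely asserts that one argues ``in exactly the same way'' as in (\ref{ecusplit}) and Lemma \ref{lema1}, i.e.\ by the orthogonality identity plus Doob and Theorem \ref{teomart}; you observe, correctly, that this mechanism requires the conditioning algebras to be monotone, which they are in Lemma \ref{lema1} but are not here, because the $\mathcal{G}$-window of $\mathcal{H}_k$ slides off to $-\infty$. Your substitute is sound: the reveal-$Y_k$ computation needs only the inclusion $\mathcal{H}^w_k\vee\sigma(Y_k)\subseteq\mathcal{F}_{-\infty}^h\vee\mathcal{G}_{-\infty}^{-s_k(t)+l_h(s)+|b|}$ (not the equality you wrote, but your bound uses only the inclusion); L\'{e}vy's downward theorem along the decreasing family $\mathcal{F}_{-\infty}^h\vee\mathcal{G}_{-\infty}^{v}$, $v\to-\infty$, and the decreasing case of Theorem \ref{teomart}, which the paper's statement covers, give $\mathrm{Cov}(X,Y_k\mid\mathcal{H}^w_k)\to 0$ a.s.\ for $\mathcal{H}^w_k=\mathcal{F}_{-\infty}^h\vee\mathcal{G}_{-\infty}^{-s_k(t)+l_h(s)}$; and the finite-window-to-whole-past replacement is indeed the $z_{t,k}$-type estimate of Theorem \ref{teoconvergence} run with cuts at $h$ and $-s_k(t)+l_h(s)$, yielding a uniform error of order $\gamma_{l_k(t)+l_h(s)-1}$. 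As for what each route buys: the paper's route would keep this lemma Gibbs-free, and it can be partially rescued by re-centering, since shifting (\ref{shiftedterm}) by $l_k(t)+l_h(s)$ instead of $k$ produces the monotone algebras $\mathcal{F}_{-l_k(t)-l_h(s)+1}^{s_k(t)+h-l_h(s)}\vee\mathcal{G}_{-l_k(t)-l_h(s)+1}^{0}$, to which Lemma \ref{lema1} applies verbatim; but that gives a.s.\ convergence of the re-centered remainder, and the $D$ of the statement is that remainder composed with the varying shift $T^{-(s_k(t)-l_h(s))}$, under which a.s.\ convergence is not preserved. So for the lemma exactly as stated, a uniform or quantitative input such as your Gibbs step appears genuinely necessary; your proof, at the acknowledged cost of redoing the Theorem \ref{teoconvergence} bookkeeping with mixed cuts (which you defer but describe accurately), is the one that actually establishes the claim.
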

\begin{proof}We have that
$$S^k(\tilde{S}_{s,h}(f))(t,\omega,\mu)=f\left(S^k(t,\tilde{S}_{s,h}(\omega,\mu))\right)$$
and
$$S^k(t,\tilde{S}_{s,h}(\omega,\mu))=\left(R_\alpha^k(t),(T^{k+h}(\omega^1),T^{l_k(t)+l_h(s)}(\omega^2)),S_{t,k}\left(T_{s,h}(\omega),\mu_{s,h}^\omega\right)\right)$$
And from Observation \ref{obsemedidas} and invariance of $\mu$ (that is, (\ref{shiftedcond}))
\begin{align}\label{shiftedterm}
S_{t,k}&\left(T_{s,h}(\omega),\mu_{s,h}^\omega\right)([a]\times[b])=\mathbb{E}\left[T^{k+h}(\mathbbm{1}_{\overline{[a]}})T^{l_{k,h}(t,s)}(\mathbbm{1}_{\overline{[b]}})\mid \mathcal{F}_1^{k+h}\vee\mathcal{G}_1^{l_{k,h}(t,s)}\right](\omega)\nonumber\\
&=T^k\left( \mathbb{E}\left[\mathbbm{1}_{T^{-h}(\overline{[a]})}\mathbbm{1}_{T^{k-l_{k,h}(t,s)}(\overline{[b]})}\mid \mathcal{F}_{-k+1}^{h}  \vee\mathcal{G}_{-k+1}^{-k+l_{k,h}(t,s)}\right]\right)(\omega')
\end{align}
where $l_{k,h}(t,s)=l_k(t)+l_h(s)$.

Now, recall that in Lemma \ref{lema1} (and the preceding (\ref{ecusplit})) we managed to write the conditional expectation 
$$F^{[a]\times [b]}_{t,k}=\mathbb{E}\left[\mathbbm{1}_{T^{-s_k(t)}(\overline{[a]})}\mathbbm{1}_{\overline{[b]}}\mid\mathcal{F}^{s_k(t)}_{-l_k(t)+1}\vee\mathcal{G}^0_{-l_k(t)+1}\right]$$
as a sum of two terms $C^{[a]\times [b]}_{t,k}$ and $D^{[a]\times [b]}_{t,k}$, where $D^{[a]\times [b]}_{t,k}$ converged to 0 a.s. and

$$C^{[a]\times [b]}_{t,k}=\mathbb{E}\left[\mathbbm{1}_{T^{-s_k(t)}(\overline{[a]})}\mid\mathcal{F}_{-l_k(t)+1}^{s_k(t)}\vee\mathcal{G}^0_{-l_k(t)+1}\right]\mathbb{E}\left[\mathbbm{1}_{[b]}\mid\mathcal{F}_{-l_k(t)+1}^{s_k(t)}\vee\mathcal{G}^0_{-l_k(t)+1}\right]$$

Using projections in exactly the same way as in (\ref{ecusplit}) we can write the term that is being shifted in (\ref{shiftedterm}) as
$$\mathbb{E}\left[\mathbbm{1}_{T^{-h}(\overline{[a]})}\mathbbm{1}_{T^{s_k(t)-l_h(s)}(\overline{[b]})}\mid \mathcal{F}_{-k+1}^{h}  \vee\mathcal{G}_{-k+1}^{-s_k(t)+l_h(s)}\right]=C_{(s,h),(t,k)}^{[a]\times[b]}+D_{(s,h),(t,k)}^{[a]\times[b]}$$
and then prove that $D_{(s,h),(t,k)}^{[a]\times[b]}\to 0$ a.s. in $\Omega$.
\end{proof}

\begin{theorem}\label{teomixing}Let $(\Omega,\mu,T)$ be as in Theorem \ref{teoconvergence}, $Q$ be the CP-chain distribution generated by $\mu$ and $f,f^*\in\tilde{\mathcal{S}}$, then
$$\mathbb{E}_Q\left[f\tilde{S}_{s,h}(f^*)\right]\xrightarrow{h\to\infty}\mathbb{E}_Q[f]\mathbb{E}_Q[f^*]$$
\end{theorem}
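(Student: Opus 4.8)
The plan is to mirror the proof of Theorem \ref{teoconvergence}: read $\mathbb{E}_Q[f\tilde S_{s,h}(f^*)]$ as an almost-sure limit of ergodic averages of $\mu$, reduce it via the same splitting to a double ergodic average governed by Corollary \ref{coroteo2}, and then extract the decorrelation from the limit $h\to\infty$ using exactness. First, by bilinearity of both sides in $(f,f^*)$ I would reduce to the case where $f$ and $f^*$ are single generators in $\tilde{\mathcal{S}}$, i.e. products of cylinder indicators on $\omega$ times products of functionals $\phi_{[a_i]\times[b_i]}$. Since $Q$ is generated by $\mu$ and $S^k(\tilde S_{s,h}(f^*))=\tilde S_{s,h}(f^*)\circ S^k$, for $\mu$-a.e. $\omega$ one expects
\[
\mathbb{E}_Q[f\tilde S_{s,h}(f^*)]=\lim_N\frac1N\sum_{k=1}^N f(S^k(0,\omega,\mu))\,S^k(\tilde S_{s,h}(f^*))(0,\omega,\mu).
\]
As in Theorem \ref{teoconvergence}, I would justify that this Cesàro average converges to the integral against $Q$ (and not merely to some other quantity) by showing the limit is a bounded bilinear functional of $(f,f^*)$ and invoking the Riesz-type identification used in Section \ref{subsecmeasurescp}; this sidesteps the fact that $\tilde S_{s,h}(f^*)=f^*\circ\tilde S_{s,h}$ need not be continuous on $\Xi$.

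The second step is the reduction to a double ergodic average. I would apply Lemma \ref{lema12} to rewrite $S^k(\tilde S_{s,h}(f^*))(0,\omega,\mu)$ as $T^k\big(C_{(s,h),(0,k)}+D_{(s,h),(0,k)}\big)$, discarding the $D$-term in the average via Corollary \ref{coro2} since $D_{(s,h),(0,k)}\to 0$ a.s.; simultaneously I would apply the splitting of Observation \ref{obse2} to rewrite $f(S^k(0,\omega,\mu))$ as $T^{l_k}(\tilde f)\,T^k(\mathbbm{1}_{\overline{[c]}}\prod_i g_i)$ up to averages that vanish. Collecting every factor shifted at speed $k$ into one function $G_h$ and every factor shifted at speed $l_k(t)$ into one function $F_h$, the summand becomes $T^{l_k}(F_h)(\omega')\,T^k(G_h)(\omega')$, a double ergodic average. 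Because the Gibbs system is exact (Lemma \ref{lemagibbsexact}), Corollary \ref{coroteo2} applies with $I=\mathbb{T}$ and yields the limit $\mathbb{E}[F_h]\,\mathbb{E}[G_h]$.

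The third step is the passage $h\to\infty$. At finite $h$ both $F_h$ and $G_h$ still entangle the $f$- and $f^*$-data, since the $f^*$-cylinders enter through conditional expectations of sets shifted by $\pm h$ and conditioned on the whole past. As $h\to\infty$, Doob's theorem (Theorem \ref{teomart}) sends these conditional expectations to their limits, while the extra shift by $h$ drives the $f^*$-events into the remote past and future; exactness (triviality of the tail) then forces them to decorrelate from the fixed $f$-events. This makes $\mathbb{E}[F_h]\mathbb{E}[G_h]$ factor so that the $f$-pieces reassemble into $\mathbb{E}[\tilde f]\mathbb{E}[\tilde g]$ and the $f^*$-pieces into the corresponding expression, which by the closed form of Theorem \ref{teoconvergence} is precisely $\mathbb{E}_Q[f]\,\mathbb{E}_Q[f^*]$.

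I expect this last decorrelation to be the main obstacle. One must verify that pushing the $f^*$-cylinders $h$ steps away renders their conditional expectations asymptotically independent of the $f$-data, and then recombine the surviving factors into the product of the two closed-form limits of Theorem \ref{teoconvergence}. This is exactly where exactness is invoked a second time, and it demands careful bookkeeping of the two incommensurate shift speeds $k$ and $l_k(t)$, so that the $k$-shifted and $l_k$-shifted components of $f$ and of $\tilde S_{s,h}(f^*)$ land in the correct factor of Corollary \ref{coroteo2}.
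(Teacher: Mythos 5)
Your overall plan coincides with the paper's: identify $\mathbb{E}_Q[f\tilde{S}_{s,h}(f^*)]$ with an a.s.\ limit of ergodic averages via generation, use Lemma \ref{lema12} together with the same splitting chain (Corollary \ref{coro1}, Observation \ref{obse2}, Theorem \ref{teoconvergence}) to reduce to a double ergodic average handled by Corollary \ref{coroteo2}, and then pass $h\to\infty$. Your first two steps match the paper, with one remark: the identification in step 1 is not a ``Riesz-type'' argument --- Riesz representation only produces \emph{some} functional and cannot by itself identify the limit with the $Q$-integral of the specific (a priori discontinuous) function $f\tilde{S}_{s,h}(f^*)$. What is actually used is that cylinders are clopen in the symbolic space, so the functionals $\phi_{[a]\times[b]}$ and the maps $\tilde{S}_{s,h}$ are continuous wherever the conditioning cylinder has positive measure; generation (weak convergence of the scenery distributions) then gives the identity directly.

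The genuine gap is in your third step, exactly where you predicted the main obstacle. Your proposed mechanism --- that as $h\to\infty$ ``Doob's theorem (Theorem \ref{teomart}) sends these conditional expectations to their limits,'' followed by an appeal to tail triviality --- fails as stated: the relevant conditional expectations are of indicators $\mathbbm{1}_{T^{-h}(\overline{[a_j^*]})}$ of sets receding with $h$, and such indicators do not converge a.s.\ (for a nontrivial cylinder, a.s.\ convergence of $\mathbbm{1}_{A}\circ T^h$ would force $\mu(A)\in\{0,1\}$), so the hypothesis of Theorem \ref{teomart} is violated and no second limit theorem in $h$ can be run this way. The paper's resolution requires no new convergence argument at all: by the shift-equivariance identity (\ref{shiftedcond}), the $f^*$-factors at finite $h$ are \emph{exactly} $T^h$-shifts (resp.\ $T^{l_h(s)}$-shifts) of $h$-independent functions, so the finite-$h$ limit produced by step 2 has the closed product form $\mathbb{E}\left[f_1T^{l_h(s)}(f_2)\right]\mathbb{E}\left[g_1T^h(g_2)\right]$ with $f_1,f_2,g_1,g_2$ fixed. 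The decorrelation is then a single application of ordinary mixing of the Gibbs system (which indeed follows from exactness, so your instinct about where the hypothesis enters is morally right), and Theorem \ref{teoconvergence} identifies $\mathbb{E}[f_1]\mathbb{E}[g_1]=\mathbb{E}_Q[f]$ and $\mathbb{E}[f_2]\mathbb{E}[g_2]=\mathbb{E}_Q[f^*]$. The ``careful bookkeeping'' you defer is precisely this equivariance computation establishing $F_h=f_1\cdot T^{l_h(s)}(f_2)$ and $G_h=g_1\cdot T^h(g_2)$; without it, the factorization you want in the limit is unjustified, and with it, your Doob/tail-triviality machinery becomes unnecessary.
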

\begin{proof}Since $f,f^*\in\tilde{\mathcal{S}}$, we may write
$$f(\omega,\nu)=\mathbbm{1}_{[c]}(\omega^1)\mathbbm{1}_{[d]}(\omega^2)\prod_{i=1}^r\phi_{[a_i]\times[b_i]}(\nu)$$
and
$$f^*(\omega,\nu)=\mathbbm{1}_{[c^*]}(\omega^1)\mathbbm{1}_{[d^*]}(\omega^2)\prod_{i=1}^{r'}\phi_{[a_i^*]\times[b_i^*]}(\nu)$$
$\mathcal{S}\subseteq C(\Xi)$ so by Theorem \ref{teoconvergence} we have
$$\mathbb{E}_Q\left[f\tilde{S}_{s,h}(f^*)\right]=\lim_N\frac{1}{N}\sum_{k=1}^NS^k\left(f\tilde{S}_{s,h}(f^*)\right)(0,\omega,\mu)$$
for a.e. $\omega$, and by Lemma \ref{lema12} (writing, as before, $l_k=l_k(0)$)
\begin{align}
S^k&\left(f\tilde{S}_{s,h}(f^*)\right)(0,\omega,\mu)=T^k\left(\mathbbm{1}_{\overline{[c]}}\mathbbm{1}_{T^{-h}(\overline{[c^*]})}\right)(\omega)T^{l_{k}}\left(\mathbbm{1}_{\overline{[d]}}\mathbbm{1}_{T^{-l_{h}(s)}(\overline{[d^*]})}\right)(\omega)\nonumber\\
&\prod_{i=1}^r\mathbb{E}\left[T^k(\mathbbm{1}_{\overline{[a_i]}})T^{l_k}(\mathbbm{1}_{\overline{[b_i]}})\mid\ \mathcal{F}_1^k\vee\mathcal{G}_1^{l_k}\right](\omega')\prod_{j=1}^{r'}S^k(\tilde{S}_{s,h}(\phi_{[a_j^*]\times[b_j^*]}))(0,\omega,\mu)=\nonumber\\
&=T^k\left(\mathbbm{1}_{\overline{[c]}}\mathbbm{1}_{T^{-h}(\overline{[c^*]})}\right)(\omega)T^{l_{k}}\left(\mathbbm{1}_{\overline{[d]}}\mathbbm{1}_{T^{-l_{h}(s)}(\overline{[d^*]})}\right)(\omega)\nonumber\\
&T^k\left(\prod_{i=1}^r\left(C_{0,k}^{[a_i]\times[b_i]}+D_{0,k}^{[a_i]\times[b_i]}\right)\prod_{j=1}^{r'}\left(C_{(s,h),(0,k)}^{[a_j^*]\times[b_j^*]}+D_{(s,h),(0,k)}^{[a_j^*]\times[b_j^*]}\right)\right)(\omega')
\end{align}
for any $\omega'\in\pi^{-1}(\{\omega\})$.

From here we follow the proofs of Corollary \ref{coro1}, Observation \ref{obse2} and Theorem \ref{teoconvergence} to show that

$$\frac{1}{N}\sum_{k=1}^NS^k \left(f\tilde{S}_{s,h}(f^*)\right)(0,\omega,\mu)\xrightarrow{a.s.} \mathbb{E}[\tilde{f}_h]\mathbb{E}[\tilde{g}_h]$$
where, writing as before $\mathcal{F}_\mathcal{G}=\bigcap_{k=1}^\infty\mathcal{F}_{-\infty}^0\vee\mathcal{G}_{-\infty}^{-k}$,
\begin{align}
\tilde{g}_h&=\mathbbm{1}_{\overline{[c]}\cap T^{-h}(\overline{[c^*]})}\prod_{i=1}^r\mathbb{E}\left[\mathbbm{1}_{\overline{[a_i]}}\biggr\rvert\mathcal{F}_\mathcal{G}\right]\prod_{j=1}^{r'}\mathbb{E}\left[\mathbbm{1}_{T^{-h}(\overline{[a_j^*]})}\biggr\rvert\bigcap_{k=1}^\infty\mathcal{F}_{-\infty}^h\vee\mathcal{G}_{-\infty}^{-k}\right]\nonumber\\
&=\mathbbm{1}_{\overline{[c]}}\prod_{i=1}^r\mathbb{E}\left[\mathbbm{1}_{\overline{[a_i]}}\biggr\rvert\mathcal{F}_{\mathcal{G}}\right]T^h\left(\mathbbm{1}_{\overline{[c^*]}}\prod_{j=1}^{r'}\mathbb{E}\left[\mathbbm{1}_{\overline{[a_j^*]}}\biggr\rvert\mathcal{F}_{\mathcal{G}}\right]\right)\nonumber
\end{align}
and, writing $\mathcal{F}_\infty=\mathcal{F}_{-\infty}^\infty\vee\mathcal{G}_{-\infty}^{0}$ (compare to Theorem \ref{teoconvergence} and recall that $\Pi_\infty(f)=\mathbb{E}[f\mid\mathcal{F}_\infty]$)
\begin{align}
\tilde{f}_h&=\mathbbm{1}_{\overline{[d]}\cap T^{-l_h(s)}(\overline{[d^*]})}\prod_{i=1}^r\mathbb{E}\left[\mathbbm{1}_{\overline{[b_i]}}\mid \mathcal{F}_\infty\right]\prod_{j=1}^{r'}\mathbb{E}\left[\mathbbm{1}_{T^{l_h(s)}(\overline{[b_j^*]})}\mid \mathcal{F}_{-\infty}^\infty\vee\mathcal{G}_{-\infty}^{l_h(s)}\right]\nonumber\\
&=\mathbbm{1}_{\overline{[d]}}\prod_{i=1}^r\mathbb{E}\left[\mathbbm{1}_{\overline{[b_i]}}\mid \mathcal{F}_\infty\right]T^{l_h(s)}\left(\mathbbm{1}_{\overline{[d^*]}}\prod_{j=1}^{r'}\mathbb{E}\left[\mathbbm{1}_{\overline{[b_j^*]}}\mid \mathcal{F}_\infty\right]\right)\nonumber
\end{align}
Hence, $\mathbb{E}_Q\left[f\tilde{S}_{s,h}(f^*)\right]=\mathbb{E}[\tilde{f}_h]\mathbb{E}[\tilde{g}_h]=\mathbb{E}[f_1T^{l_h(s)}(f_2)]\mathbb{E}[g_1T^h(g_2)]$, where 
\begin{align}
&f_1=\mathbbm{1}_{\overline{[d]}}\prod_{i=1}^r\mathbb{E}\left[\mathbbm{1}_{\overline{[b_i]}}\mid \mathcal{F}_\infty\right]\quad\quad\quad&f_2=\mathbbm{1}_{\overline{[d^*]}}\prod_{j=1}^{r'}\mathbb{E}\left[\mathbbm{1}_{\overline{[b_j^*]}}\mid \mathcal{F}_\infty\right]\nonumber\\
&g_1=\mathbbm{1}_{\overline{[c]}}\prod_{i=1}^r\mathbb{E}\left[\mathbbm{1}_{\overline{[a_i]}}\biggr\rvert\mathcal{F}_\mathcal{G}\right]
&g_2=\mathbbm{1}_{\overline{[c^*]}}\prod_{j=1}^{r'}\mathbb{E}\left[\mathbbm{1}_{\overline{[a_j^*]}}\biggr\rvert\mathcal{F}_\mathcal{G}\right]\nonumber
\end{align}

Finally, since Gibbs systems are mixing,
\begin{align}\mathbb{E}_Q\left[f\tilde{S}_{s,h}(f^*)\right]=\mathbb{E}[f_1T^{l_h(s)}(f_2)]\mathbb{E}[g_1T^h(g_2)]\xrightarrow{h\to\infty}\nonumber \mathbb{E}\left[f_1 \right]\mathbb{E}\left[f_2 \right]\mathbb{E}\left[g_1  \right]\mathbb{E}\left[g_2\right]\nonumber
\end{align}
and again from Theorem \ref{teoconvergence}, for every $\omega$ in a full measure $\Omega'_+\subseteq\Omega_+$
$$\mathbb{E}_Q[f]=\lim_N\frac{1}{N}\sum_{k=1}^NS^k(f)(0,\omega,\mu)=\mathbb{E}[f_1]\mathbb{E}[g_1]$$
and
$$\mathbb{E}_Q[f^*]=\lim_N\frac{1}{N}\sum_{k=1}^NS^k(f^*)(0,\omega,\mu)=\mathbb{E}[f_2]\mathbb{E}[g_2]$$

So $\lim_h\mathbb{E}_Q[f\tilde{S}_{t,h}(f^*)]= \mathbb{E}_Q[f^*]\mathbb{E}_Q[f]$, as wanted.
\end{proof}
\begin{corollary}$Q$ is an invariant and ergodic measure for the transformation $S$ on $X$.
\end{corollary}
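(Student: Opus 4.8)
The plan is to split the statement into invariance and ergodicity. Invariance is essentially free from the construction: for $f\in\mathcal{S}$ and $\mu$-a.e.\ $\omega$, Theorem~\ref{teoconvergence} gives $\mathbb{E}_Q[S(f)]=\lim_N\frac1N\sum_{k=1}^N S^{k+1}(f)(0,\omega,\mu)$, and shifting the summation index by one changes the Ces\`aro average by at most $2\|f\|_\infty/N\to0$, so $\mathbb{E}_Q[S(f)]=\mathbb{E}_Q[f]$; since $\mathcal{A}=\mathrm{span}\,\mathcal{S}$ is dense in $C(X)$, $Q$ is $S$-invariant. For ergodicity I would verify the mean-ergodic characterization: an $S$-invariant $Q$ is ergodic iff $\frac1N\sum_{h=1}^N\mathbb{E}_Q[F\cdot S^h(G)]\to\mathbb{E}_Q[F]\,\mathbb{E}_Q[G]$ for all $F,G\in L^2(Q)$. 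Because the averaging operators $\frac1N\sum_{h=1}^N S^h$ are contractions and $\mathcal{A}$ is dense in $C(X)$, hence in $L^2(Q)$, it suffices by bilinearity to check this for $F=\mathbbm{1}_I f$ and $G=\mathbbm{1}_J f^*$ with $I,J\in\mathcal{I}$ and $f,f^*\in\tilde{\mathcal{S}}$, since such products span $\mathcal{S}$.

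The computation I would then carry out uses $S^h(t,\omega,\nu)=(R_\alpha^h(t),\tilde S_{t,h}(\omega,\nu))$ (Observation~\ref{obseboba} together with the definition of $\tilde S_{t,h}$) and $Q=\lambda\times\tilde Q$ (Observation~\ref{obsemix}) to write
$$\mathbb{E}_Q\!\left[F\cdot S^h(G)\right]=\int_{\mathbb{T}}\mathbbm{1}_I(t)\,\mathbbm{1}_J(R_\alpha^h(t))\,\kappa_h(t)\,dt,\qquad \kappa_h(t):=\mathbb{E}_Q\!\left[f\cdot\tilde S_{t,h}(f^*)\right].$$
The key input is Theorem~\ref{teomixing}, applied with its parameter $s$ \emph{frozen at the base coordinate} $t$: for every $t\in[0,1)$ this gives $\kappa_h(t)\to\mathbb{E}_Q[f]\,\mathbb{E}_Q[f^*]$ as $h\to\infty$. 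Since $|\kappa_h|\le\|f\|_\infty\|f^*\|_\infty$, bounded convergence on the finite measure space $(\mathbb{T},\lambda)$ yields $\int_{\mathbb{T}}|\kappa_h(t)-\mathbb{E}_Q[f]\,\mathbb{E}_Q[f^*]|\,dt\to0$, so the contribution of $\kappa_h-\mathbb{E}_Q[f]\mathbb{E}_Q[f^*]$ to the Ces\`aro average is controlled by $\frac1N\sum_{h=1}^N\int|\kappa_h-\mathbb{E}_Q[f]\mathbb{E}_Q[f^*]|\,dt\to0$. For the main term I would invoke unique ergodicity of the irrational rotation: $\frac1N\sum_{h=1}^N\mathbbm{1}_J(R_\alpha^h(t))\to\lambda(J)$ for every $t$ (a finite union of intervals being a continuity set), whence, again by bounded convergence, $\frac1N\sum_{h=1}^N\int\mathbbm{1}_I\,\mathbbm{1}_J(R_\alpha^h(\cdot))\,dt\to\lambda(I)\lambda(J)$. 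Combining, the Ces\`aro average tends to $\lambda(I)\lambda(J)\,\mathbb{E}_Q[f]\,\mathbb{E}_Q[f^*]$, which by Observation~\ref{obsemix} is exactly $\mathbb{E}_Q[F]\,\mathbb{E}_Q[G]=\big(\lambda(I)\mathbb{E}_Q[f]\big)\big(\lambda(J)\mathbb{E}_Q[f^*]\big)$.

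The step I expect to be the main obstacle is conceptual rather than computational: $S$ is a genuine skew product over the irrational rotation $R_\alpha$, the fiber map $\tilde S_{t,h}$ depends on the base point $t$, and the base is only ergodic and never mixing, so $S$ cannot itself be mixing and no direct-product argument is available. The resolution I would emphasize is the decoupling of the two scales --- fiberwise mixing supplied by Theorem~\ref{teomixing} with its parameter set equal to $t$, and equidistribution of the rotation orbit in the base --- glued together through the mean ergodic theorem, with the awkward $t$-dependence of the fiber correlations $\kappa_h(t)$ absorbed by dominated convergence once those correlations are known to decay for each fixed value of the parameter.
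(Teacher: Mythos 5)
Your proposal is correct and follows essentially the same route as the paper: decompose $\mathbb{E}_Q[F\cdot S^h(G)]$ via $Q=\lambda\times\tilde{Q}$ (Observation \ref{obsemix}), apply the fiberwise mixing of Theorem \ref{teomixing} with its parameter frozen at the base coordinate $t$, absorb the $t$-dependence by dominated convergence, handle the base factor by equidistribution of the irrational rotation, and extend by bilinearity and density of $\mathcal{A}$. The only differences are cosmetic --- you invoke pointwise Weyl equidistribution where the paper uses the mean-ergodic property of $R_\alpha$, and you are in fact slightly more careful than the paper in stating only the Ces\`aro-averaged correlation decay (the paper's final displayed limit reads as if it were mixing, which is impossible given the rotation factor, though the intended Ces\`aro statement is clear from context).
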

\begin{proof}Invariance follows from the definition of $Q$ as the limit functional of the ergodic averages in  (\ref{genericity}). To show ergodicity take $f_1,f_2\in\mathcal{S}$ and write $f_i=\mathbbm{1}_{I_i}f'_i$ for $i=1,2$ and $f_i'\in\tilde{\mathcal{S}}$, $I_i\in\mathcal{I}$. Then, by Observation \ref{obsemix},
\begin{align}
\mathbb{E}_Q[f_1S^{-k}(f_2)]&=\int_X\mathbbm{1}_{I_1}(t)\mathbbm{1}_{I_2}(R_\alpha^k(t))f_1'(\omega,\nu)\tilde{S}_{t,k}(f_2')(\omega,\nu)\ d(\lambda\times\tilde{Q})(t,\omega,\nu)\nonumber\\
&=\int\mathbbm{1}_{I_1}(t)\mathbbm{1}_{I_2}(R_\alpha^k(t))\mathbb{E}_{\tilde{Q}}[f_1'\tilde{S}_{t,k}(f_2')]\ d\lambda(t)\nonumber
\end{align}
And then, by the dominated convergence theorem (recall $f_1$ and $f_2$ are bounded) and Theorem \ref{teomixing}
$$\mathbb{E}_Q[f_1S^{-k}(f_2)]-\lambda(I_1\cap R^{-k}_\alpha(I_2))\mathbb{E}_{\tilde{Q}}[f_1']\mathbb{E}_{\tilde{Q}}[f_2']\to 0$$
So, by ergodicity of $R_\alpha$
\begin{align}
\lim_N\frac{1}{N}\sum_{k=1}^N\mathbb{E}_Q[f_1S^{-k}(f_2)]=&\lim_N\frac{1}{N}\sum_{k=1}^N\lambda(I_1\cap R^{-k}_\alpha(I_2))\mathbb{E}_{\tilde{Q}}[f_1']\mathbb{E}_{\tilde{Q}}[f_2']=\nonumber\\
&=\mathbb{E}_{\tilde{Q}}[f_1']\mathbb{E}_{\tilde{Q}}[f_2']\lambda(I_1)\lambda(I_1)=\mathbb{E}_Q[f_1]\mathbb{E}_Q[f_2]\nonumber
\end{align}
By bilinearity this can be extended to any $f_1,f_2\in\mathcal{A}$, and since any function in $C(X)$ can be uniformly approximated by functions in $\mathcal{A}$ it follows that
$$\mathbb{E}_Q[f_1S^{-k}(f_2)]\to \mathbb{E}_Q[f_1]\mathbb{E}_Q[f_2]$$
for all $f_1,f_2$ in $C(X)$ and by density of $C(X)$ for all functions in $L^1(X)$.
\end{proof}
\subsection{Proof of the projection result}
To prove Theorem \ref{projectionresult} it suffices to invoke the following result, which is proven in \cite[Lemma 5.1]{Ferguson2013}. Notice that even though it is stated there for Bernoulli measures, the result holds in general, since their proof only uses Marstrand's projection theorem for measures (which holds in general), the fact that the $Q$ distribution is a product of Lebesgue measure and its $\Xi$ component $\tilde{Q}$ and the fact that a distribution generated by $\mu$ is supported on measures $\nu$ that satisfy $\text{dim }\nu=\text{dim }\mu$ (this is proved in the second part of \cite[Lemma 7.9]{Hochman2010}).
\begin{lemma}For any $\mu\in\mathcal{P}(\Sigma^\NN)$ that generates a CP-chain distribution on $\mathbb{T}\times\Xi$ of the form $Q=\lambda\times\tilde{Q}$ and for every $\pi\in\Pi_{2,1}\setminus\{\pi_1,\pi_2\}$ we have
$$E(\pi)=\min\{1,\text{dim }\pi\}$$
\end{lemma}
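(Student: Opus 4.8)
The plan is to use the product structure $Q=\lambda\times\tilde Q$ to promote the almost-everywhere projection theorem to a statement valid for \emph{every} non-coordinate projection. Write $c=\min\{1,\dim Q\}$ and recall that $\dim Q=\dim\mu$, since every $\nu$ in the support of the (geometric) measure component $\hat Q=P_*Q$ is exact dimensional with $\dim\nu=\dim\mu$ (by the generation hypothesis together with \cite[Lemma 7.9]{Hochman2010}). For such $\nu$ the projection $\pi\nu$ is the image of an exact-dimensional measure under a $1$-Lipschitz map into a line, so $\dim\pi\nu\le\min\{1,\dim\nu\}=c$; integrating this and invoking part (1) of Theorem \ref{teohochman} gives $E(\pi)\le c$ for all $\pi$. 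Moreover, applying Marstrand's projection theorem to $\hat Q$-a.e. $\nu$ and using Fubini (this is exactly part (2) of Theorem \ref{teohochman}) yields $E(\pi_\theta)=c$ for Lebesgue-a.e. direction $\theta$. The whole point is to remove this ``a.e.''.

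The key computation would exploit the explicit form $P(t,\omega,\nu)=(S_t)_*(\xi_*\nu)$ of the geometric realization map. Since $S_t$ is the diagonal map sending the unit square to a box of eccentricity $n^{t}$, it induces an action $g_t$ on the projective line of directions that fixes exactly the two coordinate directions (the excluded $\pi_1,\pi_2$) and otherwise acts freely on each of the two open arcs between them. I would parametrize each arc by $w\in\RR$ so that $g_t$ becomes the translation $w\mapsto w+t$ (concretely $w=\log\tan\theta/\log n$, up to sign, in which coordinate the eccentricity factor $n^{t}$ becomes a unit-speed shift). Writing $\pi_w$ for the orthogonal projection in the direction with coordinate $w$, and using that post-composing a projection with an invertible linear map of the one-dimensional target preserves the dimension of the pushforward (so $\dim \pi_w(S_t)_*\rho=\dim\pi_{w+t}\rho$), I would disintegrate $\hat Q$ over the Lebesgue factor of $Q=\lambda\times\tilde Q$ to obtain
\[
E(\pi_w)=\int_0^1\int_\Xi \dim \pi_w\,(S_t)_*(\xi_*\nu)\,d\tilde Q(\omega,\nu)\,dt=\int_0^1\phi(w+t)\,dt=\int_w^{w+1}\phi(u)\,du,
\]
where $\phi(u)=\int_\Xi \dim\pi_u(\xi_*\nu)\,d\tilde Q(\omega,\nu)$ and the first equality is part (1) of Theorem \ref{teohochman}.

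From here the conclusion is short. The function $\phi$ is bounded by $1$, so $\mathcal E(w):=E(\pi_w)=\int_w^{w+1}\phi$ is Lipschitz, hence continuous in $w$. The reparametrization $\theta\leftrightarrow w$ is a diffeomorphism of each arc, so the almost-everywhere statement above reads $\mathcal E(w)=c$ for Lebesgue-a.e. $w$; continuity then forces $\mathcal E\equiv c$ on each arc, i.e. $E(\pi)=c=\min\{1,\dim Q\}$ for every $\pi\in\Pi_{2,1}\setminus\{\pi_1,\pi_2\}$. The hard part is not the final continuity upgrade but rather the orbit-average formula: one must disintegrate correctly over the $\lambda$-factor and recognize that the range $[0,1)$ of the eccentricity parameter corresponds to exactly one period of the linearized diagonal flow $g_t$, which is precisely what makes the averaging window have unit length and the argument close. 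The remaining technical points—exact-dimensionality of $\hat Q$-a.e. $\nu$ (so that $\dim\pi\nu$ is defined and Marstrand applies) and the scaling invariance of dimension under $\pi\circ S_t$—are routine.
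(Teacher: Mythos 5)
Your proposal is correct and is essentially the argument the paper relies on, since the paper delegates this lemma to \cite[Lemma 5.1]{Ferguson2013} and notes that proof uses exactly your three ingredients: Marstrand's projection theorem for measures, the product structure $Q=\lambda\times\tilde{Q}$ (through which the diagonal maps $S_t$ act as unit-speed translation on reparametrized directions, turning $E(\pi)$ into an average of direction-wise dimensions over a unit interval), and the fact that $\hat{Q}$-a.e.\ $\nu$ satisfies $\dim\nu=\dim\mu$. Your only cosmetic deviation is obtaining the final ``a.e.\ to everywhere'' upgrade via Lipschitz continuity of the sliding window average rather than applying Marstrand--Fubini directly to $\phi$; this changes nothing of substance.
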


Theorem \ref{projectionresult} then follows from Observation \ref{obsemix} and part c) of Theorem \ref{teohochman}.

\section{Concluding remarks}Theorem \ref{projectionresult} has a trivial corollary for sets, namely, that any subset of $[0,1]^2$ whose dimension may be approximated by the dimension of Gibbs measures supported on it satisfies dimension conservation for one-dimensional projections. The most natural examples of such subsets are the images of subshifts of finite type under the $(m,n)$-adic encoding. However, the dimension of such sets may be readily approximated by the dimension of Bernoulli measures, as shown in \cite{Peres96}, and hence dimension conservation for their projections was a trivial corollary of the main result in \cite{Ferguson2013}.

It must be mentioned, nonetheless, that for the distance set result in \cite{Ferguson2013} related to Falconer's conjecture our main theorem does imply a marginal improvement. Given some $K\subseteq[0,1]^2$ which arises as the pushforward of a topologically transitive subshift of finite type as above, if we wanted to use \cite[Theorem 1.7]{Ferguson2013} to ensure that $D(K)$ (the distance set of $K$) is 1 it was not enough to ask that $\text{dim}(K)\geq 1$ and $\mathcal{H}^1(K)>0$. Indeed, for $\text{dim}(K)= 1$ the approximating Bernoulli measures $\mu_\epsilon$ that generate an ergodic CP-chain would have $\text{dim}(\mu_\epsilon)<1$ so that clearly $\mathcal{H}^1(\text{supp }\mu_\epsilon)=0$, which falls short of the conditions for \cite[Theorem 1.7]{Ferguson2013}. With our results, however, we now know that the Gibbs measure $\mu$ that $K$ supports and which has full dimension (i.e. $\text{dim}(K)=\text{dim}(\mu)$) generates an ergodic CP-chain so that if $\mathcal{H}^1(K)>0$ we must have $D(K)=1$ by \cite[Theorem 1.7]{Ferguson2013}.


It would be interesting to extend our result on Gibbs measures to more general systems, in particular to exact endomorphisms, for which the convergence of the relevant multiple ergodic average is known (see \cite{Derri08}) and for which all our reductions previous to the proof of Theorem \ref{teoconvergence} may be carried out. Yet, this is no easy task. To see why, notice that that if $\mu$ generates a CP-chain then, for any cylinder $a\in\Lambda^*$, the averages
$$\frac{1}{N}\sum_{k=0}^{N-1}T^k\left(\mathbb{E}_\mu[\mathbbm{1}_{[a]\times\Lambda'^\ZZ}\mid\mathcal{F}_{-k+1}^0\vee\mathcal{G}_{-k+1}^{-s_k}]\right)(\omega)$$
must convergence for $\mu$-a.e. $\omega$ . In the Bernoulli case this convergence is trivial, since independence from the past means the average is constantly equal to $\mu([a]\times\Lambda'^\ZZ)$. In the case of a product of two measures on $\Lambda^\ZZ$ and $\Lambda'^\ZZ$ (as in \cite{Hochman2010}), the conditioning of $\mathcal{G}$ disappears,  $\mathcal{F}_{-k+1}^0\vee\mathcal{G}_{-k+1}^{-s_k}$ becomes the monotone sequence $\mathcal{F}_{-k+1}^0$, which converges pointwise, and Maker's theorem ensures the convergence of the averages. In the general case, this monotonicity is lost, and the sequence of $\sigma$-algebras $\mathcal{H}_k=\mathcal{F}_{-k+1}^0\vee\mathcal{G}_{-k+1}^{-s_k}$ need not even satisfy the condition
$$\mathcal{H}^*=\bigcap_{k=1}^\infty\bigcup_{l\geq k}\mathcal{H}_l=\bigcup_{k=1}^\infty\bigcap_{l\geq k}\mathcal{H}_l=\mathcal{H}_*$$
which is sufficient (though not necessary) for the convergence of $\mathbb{E}[f\mid\mathcal{H}_k]$ in $L^2$-norm, but is not sufficient for its pointwise convergence (see \cite{Alonso88}). 

In the present work we managed to sort out this problem by showing the sequence  $\mathbb{E}[\mathbbm{1}_{\overline{[a]}}\mid \mathcal{H}_k]$ is asymptotically close to some other sequence $\mathbb{E}[\mathbbm{1}_{\overline{[a]}}\mid \mathcal{T}_k]$ for which the $\mathcal{T}_k$ are indeed monotonic. In the absence of the Gibbs property, this approach looks rather ineffective.
\bibliography{Gibbsbiblio}
\bibliographystyle{plain}
\end{document}